\newcommand{\Lv}{\mathrm{Lv}}
\newcommand{\hLv}{\mathrm{hLv}}
\newcommand{\ShG}{\mathrm{ShG}}
\newcommand{\GFF}{\mathrm{GFF}}
\newcommand{\BRW}{\mathrm{BRW}}
\newcommand{\embdbalBRW}{\hat A}
\newcommand{\anc}[2]{{#1}^{#2}}
\newcommand{\Gen}[1]{D_{#1}}
\newcommand{\rGen}[2]{\cD_{#1,#2}}
\newcommand{\incr}[2]{\Delta_{#1,#2}}
\newcommand{\intRay}[1]{R_{#1}}
\newcommand{\horo}{\mathfrak{h}}
\newcommand{\MA}{M^A}
\newcommand{\MS}{M^S}
\newcommand{\tMA}{\tilde M^A}
\newcommand{\hMA}{\hat M^A}
\newcommand{\Mplus}{M^{+}}
\newcommand{\Mminus}{M^{-}}
\newcommand{\regGMC}[3]{M_{#1,#2}^{#3}}
\newcommand{\embdbalBRWregGMC}[3]{M_{#1, #2}^{\embdbalBRW^{#3}}}
\newcommand{\balBRWregGMC}[3]{M_{#1, #2}^{A^{#3}}}
\newcommand{\PLv}[1]{\P_{#1}^{\Lv} }	
\newcommand{\PShG}[1]{\P_{#1}^\ShG}
\newcommand{\ctLv}[1]{\nu_{#1}^\Lv}
\newcommand{\ctLmLv}[2]{\nu_{#1, #2}^\Lv}
\newcommand{\regmLv}[3]{\nu_{#1, #2, #3}^\Lv}
\newcommand{\ctLmGFF}[2]{\nu_{#1, #2}^\GFF}
\newcommand{\regmGFF}[3]{\nu_{#1,#2, #3}^\GFF }
\newcommand{\regmBRW}[2]{\nu_{#1, #2}^\BRW}
\newcommand{\regmLBRW}[3]{\nu_{#1, #2, #3}^\BRW }
\newcommand{\regmbalBRW}[3]{\nu_{#1, #2, #3}^\BRW}
\newcommand{\regmhLv}[3]{\nu_{#1, #2, #3}^\hLv}
\newcommand{\muA}{\mu^A}
\newcommand{\ctmGFF}[1]{\nu_{#1}^{\GFF}}
\newcommand{\ctmLv}[1]{\nu_{#1}^\Lv}
\newcommand{\embdPLv}[1]{\hat \P_{#1}^\Lv}
\newcommand{\embdELv}[1]{\hat \E_{#1}^\Lv}
\newcommand{\lpnorm}[3]{ \lVert #1 \rVert_{L^{#3}( \Gen{#2} )}}
\newcommand{\rlpnorm}[4]{\| #1 \|_{L^{#4}(\rGen{#2}{#3} ) }}
\newcommand{\base}{p}
\newcommand{\Cstar}{C_*}
\newcommand{\bC}{L}
\newcommand{\deli}{\delta_1}
\newcommand{\delii}{\delta_2}
\newcommand{\Ccomp}{K} 
\newcommand{\Clb}{K_0}  
\newcommand{\clb}{c_0}  
\newcommand{\BigC}{C}
\newcommand{\caux}{c'}
\newcommand{\TT}{{\mathfrak{T}}}
\newcommand{\Ray}{{\mbox{\tt Ray}}}
\newcommand{\ldlv}{k}
\newcommand{\evnt}{\cA_{v,a,k}}
\newcommand{\subdif}{\theta}
\newcommand{\mean}{m}
\title{Decay of correlations for the massless hierarchical Liouville model in infinite volume}
\date{}
\author{Michael Hofstetter \footnote{Department of Mathematics,
Weizmann Institute.  E-mail: {\tt michael.hofstetter@weizmann.ac.il}}  \and Ofer Zeitouni  \footnote{Department of Mathematics,
Weizmann Institute.  E-mail: {\tt ofer.zeitouni@weizmann.ac.il}} }
\begin{document}

\maketitle

\begin{abstract}
Let $A=(A_v)_{v\in \cT}$ be the balanced Gaussian Branching Random Walk on a $d$-ary tree $\cT$ and let $\MA$ be the multiplicative chaos with parameter $\gamma \in (0, \sqrt{2\log d})$ constructed from $A$.
In this work we establish the precise first order asymptotics of 
%the the logarithm of
negative exponential moment of  $\MA$,
i.e.\ we prove that for $t_k = \lambda \base^k$ with $\lambda>0$ and $\base$ an explicit constant depending only on $\gamma$ and $d$, we have as $k \to \infty$,
\begin{equation*}
-\frac{1}{d^k} \log \E[e^{-\lambda \base^k \MA } ] \to  h(\lambda),
\end{equation*}
where $h\colon (0,\infty)\to \R$ is a non-explicit positive continuous function.

This result allows us to study the law of $A$ tilted by $e^{-t_k \MA}$ for particular values of $\lambda$, with $k\to \infty$.
In this setting we prove that the normalized $L^1$ norm of $A$ in generation $k-a$
is bounded and converges to $0$ when first $k\to \infty$ and then $a\to 0$.

As an application we prove that in this setting,
under the tilt $e^{-t_k \MA}$ and with $k\to \infty$, the Branching Random Walk $A$ exhibits a weak decay of correlations,
which is not present in the non-tilted model.

Our methods also apply to the usual Branching Random Walk $(S_v)_{v\in \cT}$ and with $\MA$ replaced by $\frac{1}{2}(\Mplus + \Mminus )$, where $\Mplus$ and $\Mminus$ are the multiplicative chaoses with parameter $\gamma \in (0, \sqrt{2\log d})$ constructed from $S$ and $-S$.
In that case we prove that, as $k\to \infty$, 
\begin{equation*}
-\frac{1}{d^k} \log \E[e^{- \frac{\lambda \base^k}{2}( \Mplus +  \Mminus) }] \to  \tilde h(\lambda),
\end{equation*}
where $\tilde h\colon (0,\infty)\to \R$ is again a non-explicit positive continuous function.

Our models are motivated by Euclidean field theory and can be seen as hierarchical versions of the massless Liouville and the sinh-Gordon field theory in infinite volume.
From this perspective our analysis sheds new light on the existence and the decay or correlations in these models, which are among the major open questions in this area.

\end{abstract}

%\textbf{Keywords:} 

%\textbf{MSC Classification:} 

%\tableofcontents

\section{Introduction}
\subsection{Model and main results}

In this work we study the balanced Gaussian Branching Random Walk $(A_v)_{v\in \cT}$ on the rooted
$d$-ary regular tree $\cT=(V_\cT,E_\cT)$,
in the regime that the full mass of its multiplicative chaos $\MA$ attains a small value.
Here $V_\cT$ and $E_\cT$ denote the set of vertices and set of edges of $\cT$, and when it is clear from the context, we write $v\in \cT$ and $e \in \cT$ for vertices $v\in V_{\cT}$ and edges $e\in E_{\cT}$.
Our convention is that each vertex, starting from the root, has $d$ descendants. Thus, the degree of the root is $d$ and the degree of all other vertices is $d+1$.
We are particularly interested in the law of $A$ tilted by $e^{-t \MA}$ with $t>0$, which is given by
\begin{equation}
\label{eq:pt-liouville-definition}
\PLv{t}(B) = \frac{\E[ \mathbf{1}_{B} e^{-t\MA} ]}{\E[ e^{-t\MA} ]}, \qquad B \in \sigma(A_v \colon v\in \cT)
\end{equation}
and study the law of  $A$ under $\PLv{t}$ for vertices $v$ in generation $k\in \N$ depending on $t$, in the limit $t\to \infty$.
In Section \ref{sec:qft-models} we explain how this model and our analysis are related to a hierarchical version the massless Liouville field theory in infinite volume,
 hence the notation in \eqref{eq:pt-liouville-definition}.

It is essential that this model is defined with the balanced Branching Random Walk $(A_v)_{v\in \cT}$ rather than the usual Branching Random Walk denoted by $(S_v)_{v\in \cT}$.
The main reason for that is that the probability that $M^S$ (constructed from the standard Branching Random Walk $S$) is small is not that small, simply by making the first few generations of the Branching Random Walk very negative (this is not possible for the balanced case), see the beginning of Section \ref{sec-additional}.
However, since both $A$ and $S$ are closely related,
we introduce both models simultaneously.
For vertices $v,w\in V_{\cT}$ we denote by $v\leftrightarrow w \subseteq E_\cT$ the unique shortest path between $v$ and $w$ in $\cT$
and by $d_\cT(v,w)\equiv |v-w|_{\cT} $ the graph distance in $\cT$, i.e.\
\begin{equation}
d_{\cT}(v,w)  =   |v\leftrightarrow w|,
\end{equation}
where $|B|$ denotes the cardinality of sets $B\subseteq E_\cT$ or $B\subseteq V_\cT$.
We denote the root of $\cT$ by $o \in V_\cT$.
%, which is the unique vertex with degree $d$, i.e.\ it holds $\deg v = d+1$ for all $v \in V_\cT \setminus \{o\}$ and $\deg o = d$.
When $w = o$ we also write $d_\cT(v,w) = |v-o|_{\cT} \equiv |v|_{\cT}$.
Moreover, we write $\Gen{k}$ for the set of vertices in generation $k \in \N_0$, i.e.\
\begin{equation}
\Gen{k}(\cT) \equiv \Gen{k} = \{ v\in V_\cT \colon |v|_{\cT} = k \}.
\end{equation}
Note that $\Gen{0} = \{ o\}$ and $ |\Gen{k} | = d^k$.
For $w \in \Gen{k}$ and $m\in \N$, $m\leq k$ we denote by $\anc{w}{m}\in \Gen{k-m}$ the unique ancestor $m$ generations above $w$.
We write $\cT_n^w \subseteq \cT$ for $n\in \N_0$ for the subtree of $\cT$ with $n$ generations rooted at $w$ with the convention $\cT_k \equiv \cT_k^o$,
and $\Gen{n}^w$ for the subset of $\Gen{k+n}$ whose common ancestor at generation $k$ is $w$,
i.e.\ for $w\in \Gen{k}$ we have
\begin{equation}
\Gen{n}^w = \{ v\in \Gen{k+n} \colon \anc{v}{n} = w \}.    
\end{equation} 

Let now $(X_e)_{e\in E_\cT}$ be a collection of independent standard Gaussian random variables. Then the Branching Random Walk is the collection of random variables $(S_v)_{v\in \cT}$ with
\begin{equation}
\label{eq:brw-sum-along-paths}
S_v = \sum_{e \in v\leftrightarrow o} X_e,
\end{equation}
where we use the convention $S_o=0$.
When we are only interested in random variables at generation $n\in \N$,
we use the notation $(S_v)_{v\in \Gen{n}}$. 
We think of the collection $(X_e)_{e\in E_{\cT}}$ being attached to the edges $E_{\cT}$,
so that for every $v\in V_\cT$ the random variable $S_v$ is obtained by summing the independent increments $X_e$ along the path connecting $o$ and $v$.
Using the independence of $(X_e)_{e\in E_\cT}$ and the underlying tree structure, we have for $v,w \in \Gen{n}$
\begin{align}
\label{eq:variance-BRW}
%\var(S_v) &= n, \\
\cov(S_v, S_w) &= n - \frac{1}{2} d_\cT (v,w).
\end{align}

For $i\in \N$ and $v\in \Gen{i}$ let $E(v)\subseteq E_{\cT}$ be the set of edges that connect $v$ with generation $i+1$, i.e.\
\begin{equation}
E(v) = \{ e\in E_{\cT} \colon e=\{v,w\} \text{ with } \anc{w}{1} = v \}.
\end{equation}

Similarly to the usual Branching Random Walk defined in \eqref{eq:brw-sum-along-paths} we define the balanced Branching Random Walk $(A_v)_{v\in \cT}$ by
\begin{equation}
\label{eq:def-brw-balanced}
A_v = \sum_{e\in o\leftrightarrow v}  Y_e ,
\end{equation}
where $(Y_e)_{e\in E_\cT}$ is now a collection of centred Gaussian random variables with variance $1$ and covariance $\E[ Y_e Y_{e'} ]=-1/(d-1)$ for $e,e' \in E(v)$ with $e\neq e'$,
and $\E[Y_e Y_{e'}] = 0$ for $e\in E(v)$, $e'\in E(v')$ with $v\neq v'$.
Alternatively, the family $(Y_e)_{e\in E_\cT}$ can be constructed from independent identically distributed centred Gaussian variables of variance $d/(d-1)$ conditioned to satisfy
\begin{equation}
\label{eq:balanced-brw-sum-to-0}
\sum_{e\in E(v)} Y_e = 0, \qquad v\in \cT.
\end{equation}

As for the standard Branching Random Walk $(S_v)_{v\in \cT}$ one can explicitly calculate the covariance of $(A_v)_{v\in \Gen{n}}$ thanks to the underlying tree structure.
In this case we have for $v,w \in \Gen{n}$
\begin{align}
\label{eq:variance-bal-BRW}
%\var(A_v) &= n , \\
%\label{eq:covariance-bal-BRW}
\cov(A_v, A_w) &= n - \frac{1}{2} d_\cT (v,w) - \mathbf{1}_{v\neq w}.
\end{align}
Note that the presence of the extra indicator $\mathbf{1}_{v\neq w}$ reflects the (negative) correlation at the descendents of the
branch-point of $v,w$.

%It is instructive to consider the case $d=2$, i.e.\ when $\cT$ is the binary tree and $(A_v)_{v\in \cT}$ is the balanced binary Branching Random Walk on $\cT$.
%In this case we have $|E(v)|=2$ for every $v\in \cT$, and thus, condition \eqref{eq:balanced-brw-sum-to-0} simplifies to
%\begin{equation}
%Y_{e_1} = - Y_{e_2}, \qquad \{e_1, e_2\} = E(v). 
%\end{equation}
%In particular, the increments from level $i$ to $i+1$ for $i\in \N_0$ are given by a collection of $2^{i-1}$ independent centred Gaussian random variables with variance $1$.

For $n \in \N$ and $\gamma \geq 0$, we set
\begin{equation}
\label{eq:gmc-brw-volume-1-approximation}
M^{S,n} = \frac{1}{d^n} \sum_{v\in \Gen{n}} e^{\gamma S_v - \frac{\gamma^2}{2} n }
\end{equation}
and similarly
\begin{align}
\label{eq:gmc-balanced-brw-volume-1-approximation}
M^{A,n} &= \frac{1}{d^n} \sum_{v\in \Gen{n}} e^{\gamma A_v - \frac{\gamma^2}{2} n },
\end{align}
where the corrections in the exponent of \eqref{eq:gmc-brw-volume-1-approximation} and  \eqref{eq:gmc-balanced-brw-volume-1-approximation} come from \eqref{eq:variance-BRW} and \eqref{eq:variance-bal-BRW}.
Since $(M^{S,n})_{n\in \N}$ and $(M^{A,n})_{n\in \N}$ are non-negative martingales,
there exist random variables $M^S$ and $M^A$ such that as $n\to \infty$ 
\begin{equation}
\label{eq:bal-brw-and-brw-gmc-as-limit}
M^{A,n} \to \MA, \qquad \text{and} \qquad M^{S,n} \to \MS ,
\end{equation}
and it is known that for $\gamma \in ( 0, \sqrt{2\log d} )$ these limits are non-trivial,
a fact that goes back at least to \cite{MR433619}; see also \cite[Theorem 3.3]{MR3444654}.

With $\MA$ as in \eqref{eq:bal-brw-and-brw-gmc-as-limit},
we now present our main results for the measure $\PLv{t}$ in \eqref{eq:pt-liouville-definition} when $t\to \infty$.
Understanding this measure involves studying the large $t$ behaviour of the exponential moment of $-t\MA$,
for which we establish the precise first order asymptotics as $t\to \infty$.
As this result and its method of proof are of independent interest,
we state it separately in Theorem \ref{thm:liouville-mgf-asymptotics} below.

It is more convenient to express the asymptotics in terms of the logarithm of the exponential moment,
which we denote by
\begin{equation}
\label{eq:mgf-balanced-gmc}
\Lambda(t) = -\log \E[e^{-t\MA}], \qquad t\geq 0 .
\end{equation}
As we show in Lemma \ref{lem:liouville-mgf-asymptotics-bounds},
the leading order of $\Lambda$ as $t\to \infty$ is of order $O(t^\alpha)$,
where $\alpha$ depending on $\gamma$ is given by
\begin{equation}
\label{eq:exponents-first-order}
\kappa = \frac{2\log d}{\gamma^2} \qquad \text{ and } \qquad \alpha = \frac{\kappa}{\kappa +1} .
\end{equation}
Note that ${1}/{\sqrt{\kappa}}$ is the normalised parameter of the multiplicative chaos,
so that for $\gamma \in (0, \sqrt{2 \log d})$ we have $\kappa \in (1,\infty)$ and $\alpha \in (\frac{1}{2}, 1)$.

In fact, we are able to establish the leading order of $\Lambda$ at least along sequences $(t_k)_{k\in \N}$ of the form $t_k = \lambda \base^k$, where $\lambda>0$ and $\base$ is given by
\begin{equation}
\label{eq:base-sequence-tk}
\base=de^{\frac{\gamma^2}{2}} = e^{\frac{\gamma^2}{2 \log d}}.
\end{equation}
Note that with this notation we have $t_k^\alpha = \lambda^\alpha d^k$.

The statement for the first order asymptotics of $\Lambda$ then reads as follows.
\begin{theorem}
\label{thm:liouville-mgf-asymptotics}
There is a continuous function $h\colon (0,\infty) \to \R$ such that as $k\to \infty$ and with $\lambda>0$,
\begin{equation}
\label{eq:liouville-mgf-asymptotics}
-\frac{1}{d^k} \log \E[e^{-\lambda \base^k \MA} ] \to h(\lambda) ,
\end{equation}
where the convergence is uniform on compact sets.
\end{theorem}
Note that the function $h$ satisfies $h(\base \lambda)=d \cdot h(\lambda)$, 
and therefore $h$ is determined by its values for $\lambda\in [1,\base)$. 
In particular, with $\{x\} \coloneq x-\lfloor x\rfloor$ denoting the fractional part of $x\in \mathbb{R}$ and using 
the notation $\{t\}_\base = \base^{ \{\log_{\base}(t)\}}
\in [1,\base]$,
the convergence in \eqref{eq:liouville-mgf-asymptotics} gives that
\begin{equation}
\label{eq:t-lim-lambda}
\lim_{t\to \infty}
\Big| \frac{1}{t^\alpha} \Lambda(t) -\frac{ h(\{t\}_\base)}{(\{t\}_\base)^\alpha}\Big|=0.
\end{equation}
Indeed, for $t>0$, we have $t = \{ t \}_{\base} \base^k$ for some $k\in \N$ depending $t$.
Using $\base^\alpha = d$,
it follows that $t^\alpha = \{t\}_{\base}^\alpha d^k$.
Thus, we have
\begin{equation}
\frac{1}{t^\alpha} \Lambda(t) -\frac{h( \{t\}_\base)}{( \{t\}_\base)^\alpha}
=
\{t\}_\base^\alpha \Big(\frac{1}{d^k} \Lambda(\{t\}_\base d^k) - h( \{t\}_\base ) \Big).
\end{equation}
Since $\{t\}_\base$ is bounded and takes values in the compact set $[1,\base)$,
the convergence \eqref{eq:t-lim-lambda} follows from Theorem \ref{thm:liouville-mgf-asymptotics}.

We expect, without strong evidence, that
\begin{equation}
\label{eq:mgf-conjectured-t-asymptotics}
  h(\lambda) = \Cstar \lambda^\alpha \; \mbox{\rm for some constant 
  $\Cstar>0$},
\end{equation}
 which would imply the full convergence of $t^{-\alpha} \Lambda(t)$
to $\Cstar$. 
The restriction in Theorem \ref{thm:liouville-mgf-asymptotics}
to sequences $(t_k)_{k\in \N}$ of the form $t_k = \lambda \base^k$ comes from our proof method,
which is a large deviations argument involving the generations of the tree, and
therefore
can be traced back to the discreteness of the underlying $d$-ary tree $\cT$.
We note that in Section \ref{sec:qft-models} we explain how our analysis is related to a hierarchical version the massless Liouville field theory in infinite volume,
in which the sequence $(t_k)_{k\in \N}$ has an apparent counterpart.
Thus, from this point of view, it natural to consider the measures $(\PLv{t_k})_{k\in \N}$ with the particular choice $t_k= \lambda \base^k$, $\lambda>0$.

As an application of Theorem \ref{thm:liouville-mgf-asymptotics}
we establish a result on the size of values of the balanced Branching Random Walk $(A_v)_{v\in \cT}$ under $\PLv{t_k}$.
We quantify the size of the values by the $L^p$ norm defined as follows.
For a function $\varphi\colon \cT \to \R$ we define its $L^p$ norm at generation $n \in \N_0$ through 
\begin{equation}
\lpnorm{\varphi}{n}{p}^p = \frac{1}{|\Gen{n}|} \sum_{v\in \Gen{n}} |\varphi_v|^p.
\end{equation}
Our main result then states that, under $\PLv{t_k}$ with $t_k = \lambda \base^k$ and suitable choices of $\lambda>0$,
the $L^1$ norm of $(A_v)_{v\in\cT}$ at generation $k-a$ converges to $0$ when first $k\to \infty$ and then $a\to \infty$.
The ambiguity around the subsequence $(t_k)_{k\in \N}$ and the function $h$,
leads to a number of technical difficulties in the analysis of $\PLv{t_k}$.
To circumvent these problems, we prove in Lemma \ref{lem:f-variational-convex} that the function $f_\lambda\colon \R\to \R$, 
\begin{equation}
\label{eq:f-tau-definition-2}
f_\lambda(x) = h(\lambda e^{\gamma x} ) ,
\end{equation}
where $h$ is as in Theorem \ref{thm:liouville-mgf-asymptotics}, is convex. 
Then we are able to establish the claimed convergence of the $L^1$ norm for values of $\lambda>0$,
which have the property that $f_\lambda$ is strictly convex at $x=0$.
That such a $\lambda$ exists is due to the following argument.
First, the upper and lower sublinear bounds  on the growth of  $h$ at $+\infty$, which are due to Theorem \ref{thm:liouville-mgf-asymptotics}, see \eqref{eq:t-lim-lambda},
% Lemma \ref{lem:liouville-mgf-asymptotics-bounds},
imply that for any $\lambda>0$, there is an $x_\lambda \in \R$ such that $f_\lambda$ is strictly convex at $x_\lambda$.
Next, the scaling relation $f_{\lambda e^y}(x)=f_\lambda(x+y)$ allows, at the cost of modifying $\lambda$,
to ensure that the strict convexity is at $0$.
Finally, the relation $f_{\base\lambda}(x)=d \cdot f_\lambda(x)$, which follows from Theorem \ref{thm:liouville-mgf-asymptotics},
shows that such $\lambda$ exists in the interval $[1,\base]$.
%We note that by the scaling relation, if $\lambda$ is such that $f_\lambda$ is strictly convex at $0$, so is $\lambda_j=\lambda p^j$, for any integer $j$.
%
Thus, we have proved:
\begin{equation}
\label{eq:strictly-convex-at-0}
\mbox{\rm There exists $\lambda\in [1,\base]$ such that $f_\lambda(x)$ is strictly convex at $x=0$.}
\end{equation}
Then we can state our main result as follows.
\begin{theorem}
\label{thm:pt-liouville-decay-l1}
Fix $\lambda$ as in \eqref{eq:strictly-convex-at-0}. Let $\PLv{t_k}$ be as in \eqref{eq:pt-liouville-definition} with $t_k=\lambda \base^k$. 
%and assume that $\lambda>0$ is such that $f_\lambda$ is strictly convex at $0$.
Then there is a sequence $(\epsilon_a)_{a\in\N}$ with $\epsilon_a \to 0$ as $a\to \infty$ such that as $k\to \infty$
\begin{equation}
\label{eq:pt-liouville-decay-l1}
\PLv{t_k}( \lpnorm{A}{k-a}{1} \leq \epsilon_a ) \to 1.
\end{equation}
\end{theorem}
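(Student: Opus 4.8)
The plan is to analyze the tilted measure $\PLv{t_k}$ via a change of measure to the root-to-generation-$(k-a)$ values of $A$, and to exploit the fact that conditionally on $(A_v)_{v\in \Gen{k-a}}$ the chaos $\MA$ splits as a weighted average of independent copies of (scaled versions of) the chaos of $A$ restricted to the subtrees rooted at generation $k-a$. Concretely, for $w\in \Gen{k-a}$ write $\MA = d^{-(k-a)} \sum_{w\in \Gen{k-a}} e^{\gamma A_w - \frac{\gamma^2}{2}(k-a)} \MA_w$, where the $\MA_w$ are i.i.d.\ copies of the chaos built from the balanced BRW on an $a$-generation subtree (up to the negligible correction coming from the $\mathbf 1_{v\neq w}$ term in \eqref{eq:variance-bal-BRW}, which only changes things by a bounded multiplicative constant and can be absorbed). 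Then $e^{-t_k \MA} = \prod_{w} e^{-t_k d^{-(k-a)} e^{\gamma A_w - \frac{\gamma^2}{2}(k-a)} \MA_w}$, and since $t_k d^{-(k-a)} e^{-\frac{\gamma^2}{2}(k-a)} = \lambda \base^k d^{-(k-a)} e^{-\frac{\gamma^2}{2}(k-a)} = \lambda \base^a$ by the definition $\base = d e^{\gamma^2/2}$, each factor is $e^{-\lambda \base^a e^{\gamma A_w}\MA_w}$. Taking the conditional expectation over the subtrees and using Theorem \ref{thm:liouville-mgf-asymptotics} applied at "level $a$" to each subtree gives, heuristically, $\log \E[e^{-t_k\MA} \mid (A_w)_{w\in\Gen{k-a}}] \approx -d^a \sum_{w\in\Gen{k-a}} h(\lambda e^{\gamma A_w}) = -d^a \sum_w f_\lambda(A_w)$.

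From here the strategy is a tilted large-deviations / Laplace argument at generation $k-a$. Dividing by $d^k = d^{k-a}\cdot d^a$, the tilt $e^{-t_k\MA}$ reweights the empirical measure of $(A_w/\sqrt{k-a})_{w\in\Gen{k-a}}$, or rather directly the configuration $(A_w)_{w\in\Gen{k-a}}$, by roughly $\exp(-d^k \cdot \frac{1}{d^{k-a}}\sum_w f_\lambda(A_w))$. The denominator $\E[e^{-t_k\MA}]$ contributes $\exp(-d^k h(\lambda)(1+o(1)))$ by Theorem \ref{thm:liouville-mgf-asymptotics}. Combining, under $\PLv{t_k}$ the quantity $\frac{1}{d^{k-a}}\sum_{w\in\Gen{k-a}} f_\lambda(A_w)$ concentrates at $h(\lambda)=f_\lambda(0)$; that is, the "average value of $f_\lambda$" along generation $k-a$ is pinned at its value at $0$. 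Because $f_\lambda$ is convex (Lemma \ref{lem:f-variational-convex}) and, by the choice \eqref{eq:strictly-convex-at-0}, strictly convex at $0$, Jensen's inequality together with the strict convexity forces the empirical distribution of $(A_w)_{w\in\Gen{k-a}}$ to be concentrated near $0$: if a positive fraction of the $A_w$ were at macroscopic distance from $0$, the average of $f_\lambda$ would strictly exceed $f_\lambda(0)$ by an amount not compensated elsewhere (here one uses that $f_\lambda$ is bounded below and that the only way to keep the mean of a convex function at its value at a point of strict convexity is to keep the mass near that point). Quantitatively this yields $\PLv{t_k}\big(\lpnorm{A}{k-a}{1} \le \epsilon_a\big)\to 1$ for a suitable $\epsilon_a$; one then checks $\epsilon_a \to 0$ as $a\to\infty$ by tracking how the strict-convexity modulus at $0$ controls the concentration and letting the subtree depth grow.

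The main obstacle is making the approximation $\log\E[e^{-t_k\MA}\mid (A_w)_{w\in\Gen{k-a}}] \approx -d^a\sum_w f_\lambda(A_w)$ rigorous \emph{uniformly in the configuration}, and compatibly with the outer large-deviation estimate. Theorem \ref{thm:liouville-mgf-asymptotics} gives the asymptotics of $-d^{-a}\log\E[e^{-\lambda\base^a\MA}]$ only along the discrete scale $\lambda \mapsto \lambda\base^a$ and only as $a\to\infty$; but here we need it with the random argument $\lambda e^{\gamma A_w}$, where $A_w$ ranges over an interval growing with $k-a$, including atypically large values, and we need error terms that survive multiplication by $d^{k-a}$. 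This is presumably why the theorem is stated with $k\to\infty$ \emph{first} and $a\to\infty$ afterwards: for fixed $a$, one controls the sum $\sum_w f_\lambda(A_w)$ by a truncation/tail argument (large $A_w$ are rare enough under the BRW law and, being penalized by the tilt, contribute negligibly), reducing to a bounded range of arguments where the convergence in Theorem \ref{thm:liouville-mgf-asymptotics} — uniform on compacts — applies with controlled error $o(d^{k-a})$ per term. The second delicate point is the lower bound on $\E[e^{-t_k\MA}]$ matching the upper bound from the Laplace analysis, i.e.\ verifying that the variational problem "minimize $\frac{1}{d^{k-a}}\sum_w f_\lambda(A_w)$ over the BRW configuration, with entropy cost" is indeed solved (to leading order) by the near-constant-zero configuration, which is exactly where convexity of $f_\lambda$ enters to rule out spreading the $A_w$ out. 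I expect these two points — the uniform-in-configuration replacement and the matching variational lower bound — to be where essentially all the work lies; the final passage from concentration of $\frac{1}{d^{k-a}}\sum f_\lambda(A_w)$ to smallness of $\lpnorm{A}{k-a}{1}$ is then a soft convexity argument.
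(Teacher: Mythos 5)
Your proposal follows essentially the same route as the paper: expand $\MA$ at generation $k-a$ via the recursion \eqref{eq:balanced-brw-gmc-recursion} so that the conditional expectation of $e^{-t_k\MA}$ given $(A_v)_{v\in\Gen{k-a}}$ equals $\exp(-\sum_v\Lambda(\lambda\base^a e^{\gamma A_v}))$, compare with the denominator $\exp(-d^k(f_\lambda(0)+o(1)))$ from Theorem \ref{thm:liouville-mgf-asymptotics}, and use strict convexity of $f_\lambda$ at $0$ together with the constraint $\sum_v A_v=0$ — with the tails of the empirical measure handled by the a priori exponential lower bounds of Lemma \ref{lem:liouville-mgf-asymptotics-bounds} and the compact range by uniform convergence of $d^{-a}g_a$ to $f_\lambda$ — to gain a strictly positive $\delta$ on the event $\lpnorm{A}{k-a}{1}>\epsilon$; this is exactly the content of Proposition \ref{prop:lower-bound-sum-mgf-strict-convexity} and the paper's proof of the theorem. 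The two difficulties you flag (uniformity in the configuration and the matching lower bound for $\E[e^{-t_k\MA}]$) are precisely where the paper's work lies, so the plan is sound.
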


We emphasise that the convergence \eqref{eq:pt-liouville-decay-l1} is not true under the non-tilted law:
for any fixed $a\in\N$ and as $k\to\infty$, elementary arguments give
\begin{equation}
\label{eq:probability-L1-norm-bounded-limit}
\P( \lpnorm{A}{k-a}{1} \leq C ) \to 0
\end{equation}
for any constant $C>0$. Indeed, the (non-tilted) variance of $A_v$ for $v\in D_{k-a}$ is $k-a$, and thus $\P(|A_v|\leq C)\to_{k\to\infty} 0$.
It follows that $\E[ d^{-(k-a)} \sum_{v\in D_{k-a}} {\bf 1}_{|A_v|\leq C}]\to_{k\to\infty} 0$, implying \eqref{eq:probability-L1-norm-bounded-limit}.

\begin{remark}
\label{rem:conditional-liouville-decay-l1}
Instead of tilting with $e^{-t_k\MA}$ one may equivalently consider the law of the balanced Branching Random Walk conditional on the event $\MA \leq s$ and as $s \to 0$.
In this case the reference generation $\ell \in \N$ is related to $s = s_\ell$ by
\begin{equation}
\label{eq:ell-delta-relation}
s_\ell  = \tau e^{-\frac{\gamma^2}{2}\ell }
\end{equation}
for suitable $\tau>0$.
Then, using arguments similar to those used in the proof of Theorem
\ref{thm:pt-liouville-decay-l1},
one shows that there exists $\tau>0$ and a sequence $(\epsilon_a')_{a\in \N}$ with $\epsilon_a' \to 0$ as $a\to \infty$,
such that as first $\ell \to 0$ and then $a\to \infty$ we have
\begin{equation}
\P( \lpnorm{A}{\ell -a}{1} \leq \epsilon_a' \mid \MA \leq s_\ell ) \to 1.
\end{equation}
\end{remark}

As a consequence of Theorem \ref{thm:pt-liouville-decay-l1} 
we establish a weak decay of correlations for the balanced 
Branching Random Walk $(A_v)_{v\in \cT}$ under the measure $\PLv{t_k}$.
To state this result, it is more natural to adopt a different point of view 
and embed our trees in 
an infinite $d$-ary tree $\TT$ with one semi-infinite directed path, denoted $\Ray$, starting from a distinguished
vertex, called the root and denoted $\bar o$ (not to be confused with
the root $o$ of $\cT$), see Figure \ref{fig:brw-rooted-at-infty}.
This point of view can be traced back to \cite{MR1336708}, with the particular notation we use here following \cite{MR2365486}.
For vertices $v,w\in\TT$, we let $d_\TT(v,w)$ denote the length of the (unique) geodesic connecting $v$ and $w$.
We consider the geodesic as 
containing both $v$ and $w$, and thus its length equals the number of vertices in it minus one.
A vertex $w$ is an offspring of a vertex $v$ if $d_\TT(v,w)=1$ and either $d_\TT(w,\Ray)>d_\TT(v,\Ray)$ or $v,w\in \Ray$ and $d_\TT(v,\bar o)>d_\TT(w,\bar o)$.
In particular, the root is an offspring of its unique neighbour on $\Ray$.

For $v$ a vertex in $\TT$,
let $\intRay{v}\in \Ray$ denote the intersection of the geodesic connecting $v$ to $\Ray$ with $\Ray$, that is $d_\TT(v,\intRay{v})=d_\TT(v,\Ray)$.
For $v_1,v_2\in \TT$, let $\horo(v_1,v_2)$ denote the horocycle distance between $v_1$ and $v_2$ (possibly negative), defined
as the unique function $\horo(v_1,v_2)$ which equals 
$d_\TT(u,v_2)-d_\TT(u,v_1)$ for all vertices
$u$ such that both $v_1$ and $v_2$ are descendants of $u$.
Here, we call a vertex $w\in\TT$ a descendant
of $v$ if the geodesic connecting
$w$ to $v$ contains an offspring of $v$.
We also write $\horo(v)=\horo(\bar o,v)$.
The quantity $\horo(v)$, which may be
either positive or negative, is the {\it level} to which $v$ belongs;
see Figure \ref{fig:brw-rooted-at-infty}.
We denote by $\cL$ the set of vertices at level $0$.

We consider the sequence of subtrees $(\cR_k)_{k\in \N_0}$ such that $\cR_k$ is the unique $d$-ary subtree of $\TT$ rooted at $\bar o_k\in \Ray$ with $d_\TT(\bar o_k,\bar o)=k$.
Moreover, we denote for $k\in \N$ and $a \leq k$ by $\rGen{a}{k} = \Gen{k-a}(\cR_k)$ set of vertices at level $-a$ in $\TT$, which are descendants of $\bar o_k$, i.e.\
\begin{equation}
\label{eq:levels-of-infinite-tree}
\rGen{a}{k} = \{ v \in \cR_k \colon \horo(v) = -a \}.
\end{equation}
Note that those are at level $k-a$ for the subtree $\cR_k$ rooted at $\bar o_k$.

The vertices in $\rGen{0}{k} = \Gen{k}(\cR_k)$ form a subset of  $\cL$ which we denote by $\cL_k$.
Note that $\cL_k\subseteq \cL_{k+1}$, and that for any $v,w\in \cL$ there exists $k_0=k_0(v,w)$ so that $v,w\in \cR_k$ for all $k>k_0$.
For an illustration of the construction we refer to Figure \ref{fig:brw-rooted-at-infty}.
Every subtree $\cR_k$ gives rise to a balanced Branching Random Walk on $\cR_k\subseteq \TT$, which we denote by $(\embdbalBRW_w)_{w\in \cR_k}$.
We can realise all those balanced Branching Random Walks simultaneously, and when the underlying subtree it is clear from the context, we omit it from the notation.

For $\lambda\in [1,\base]$, set $t_k=\lambda \base^k$.
We write $\PLv{t_k}$ for the probability measure in \eqref{eq:pt-liouville-definition} with $\cT=\cR_k$.
This naturally induces a probability measure on $\sigma( \embdbalBRW_v \colon v\in \cL_k )$, which we denote by $\embdPLv{t_k}$ with corresponding expectation $\embdELv{t_k}$. 
We can now state our main result concerning decay of correlation.

\begin{figure}
\begin{center}
\def\svgwidth{0.5\columnwidth}
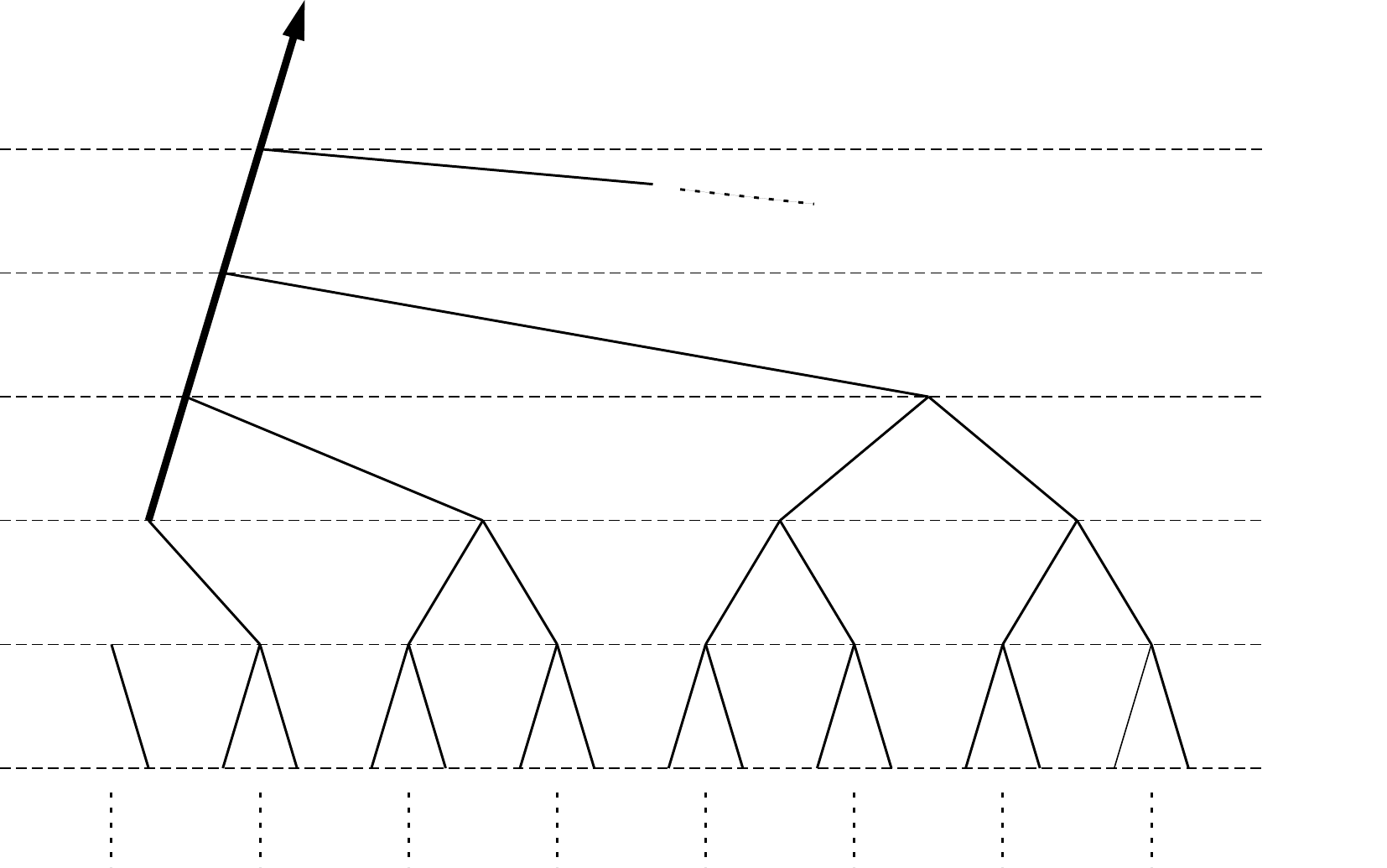
\end{center}
\caption{The tree $\TT$ in case $d=2$. The subtree rooted at $\bar o_1=R_v$ is $\cR_1$, and the subtree rooted at $\bar o_2$ is $\cR_2$. Note that $\cR_1$ is a subset of $\cR_2$.}
\label{fig:brw-rooted-at-infty}
\end{figure}

\begin{corollary}
\label{cor:decay-correlation-tree}
Fix $\lambda$ as in \eqref{eq:strictly-convex-at-0} and set  $t_k= \lambda \base^k$.
There exists a family of events
$(\evnt)_{v\in \cL, \, a, k\in \N }$ with $\embdPLv{t_k}(\evnt) \to 1$ as first $k\to \infty$ and then 
%$d_\cT(v,w)\to \infty$ such that
$a\to \infty$ such that
\begin{equation}
\label{eq:decay-correlation-tree}
\lim_{a \to \infty }\limsup_{k\to \infty} \sup_{w\in \cL\colon d_\TT(v,w)=2(a+1)} 
\big |\embdELv{t_k}[ \embdbalBRW_v \embdbalBRW_w  \mathbf{1}_{\evnt} ]  \big| = 0.
\end{equation}
\end{corollary}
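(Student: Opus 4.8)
The plan is to reduce the bound on the correlation $\embdELv{t_k}[\embdbalBRW_v\embdbalBRW_w\mathbf{1}_{\evnt}]$ to the $L^1$ decay estimate of Theorem~\ref{thm:pt-liouville-decay-l1}, exploiting the tree geometry of $\TT$. Fix $v,w\in\cL$ with $d_\TT(v,w)=2(a+1)$. Then $v$ and $w$ have a common ancestor $u$ at horocycle level $-(a+1)$, that is $u\in\rGen{a+1}{k}$ for all $k$ large enough, and $\horo(u,v)=\horo(u,w)=a+1$. Writing the balanced Branching Random Walk along the geodesics, $\embdbalBRW_v=\embdbalBRW_u+(\embdbalBRW_v-\embdbalBRW_u)$ and similarly for $w$, where the two increments $\embdbalBRW_v-\embdbalBRW_u$ and $\embdbalBRW_w-\embdbalBRW_u$ live in disjoint subtrees below $u$. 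The key algebraic point is that, conditionally on the values of $A$ at level $-(a+1)$ and on $\MA$ (equivalently, after conditioning on enough of the field to determine the tilt up to that level), the two increments are, if not independent, then at least \emph{uncorrelated and mean-zero-ish}: the balanced increments sum to zero across siblings, so that their conditional means are controlled and their cross-covariance vanishes. The product $\embdbalBRW_v\embdbalBRW_w$ then expands into $\embdbalBRW_u^2$ plus cross terms involving the increments, and the dangerous term is $\embdbalBRW_u^2$, whose typical size is $a+1$ — precisely the quantity that Theorem~\ref{thm:pt-liouville-decay-l1} forces to be small (in $L^1$, hence after a second moment/truncation argument also in the relevant sense) once we restrict to the good event $\evnt$.

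More concretely, I would define $\evnt$ to be the intersection of: (i) the event $\{\lpnorm{A}{k-a-1}{1}\le\epsilon_{a+1}\}$ transported to the $\TT$-picture on $\cR_k$ (so that $\embdPLv{t_k}(\evnt)\to1$ by Theorem~\ref{thm:pt-liouville-decay-l1} as $k\to\infty$ then $a\to\infty$); (ii) a boundedness event on the relevant increments, say that the increment fields $\embdbalBRW_\cdot-\embdbalBRW_u$ over the $a+1$ generations below $u$ have $L^2$ norm at level $0$ bounded by a constant depending on $a$ — an event of high probability under $\PLv{t_k}$ by comparison with the untilted law on the finitely many generations below $u$, since the tilt is a decreasing function of $\MA$ and one can use the FKG-type / Cauchy–Schwarz comparison $\PLv{t}(B)\le \P(B)^{1/2}\cdot(\text{moment bound})$; and (iii) possibly a level-$0$ boundedness event needed to make the expectation finite. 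On $\evnt$ we then get $|\embdbalBRW_v\embdbalBRW_w|\le |\embdbalBRW_u|^2 + |\embdbalBRW_u|(|\embdbalBRW_v-\embdbalBRW_u|+|\embdbalBRW_w-\embdbalBRW_u|)+|\embdbalBRW_v-\embdbalBRW_u||\embdbalBRW_w-\embdbalBRW_u|$, and I would bound $\embdELv{t_k}[|\embdbalBRW_u|^2\mathbf{1}_{\evnt}]$ by relating $|\embdbalBRW_u|$ to the $L^2$ norm of $A$ at level $-a-1$ restricted to $\cL_k$, which is controlled by $\lpnorm{A}{k-a-1}{1}$ times a factor absorbing the $L^1$-to-$L^2$ gap via a uniform Gaussian-tail / hypercontractivity bound under the tilt. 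Averaging over $w$ and taking $\sup$ then $\limsup_{k}$ then $\lim_a$ kills the $\embdbalBRW_u^2$ contribution (because $\epsilon_{a+1}\to0$), while the cross terms and the increment–increment term are handled by Cauchy–Schwarz against the bounded increment event in (ii), noting that the number of generations $a+1$ is fixed when $k\to\infty$, so the increment contributions are $O_a(1)$ times $\embdELv{t_k}[\lpnorm{A}{k-a-1}{1}\mathbf{1}_{\evnt}]^{1/2}\to0$, or directly $o_a(1)$ if one uses that the \emph{balanced} increments are centred given the ancestor so their contribution to $\embdELv{t_k}[\embdbalBRW_v\embdbalBRW_w\mathbf{1}_{\evnt}]$ already vanishes in conditional expectation up to the correction coming from the indicator $\mathbf{1}_{\evnt}$.

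The main obstacle is step (i)–(ii) interface: Theorem~\ref{thm:pt-liouville-decay-l1} controls $\lpnorm{A}{k-a}{1}$, an $L^1$ quantity, but the correlation bound naturally produces $|\embdbalBRW_u|^2$, an $L^2$ quantity, and these differ by a factor that is \emph{not} uniformly bounded under the tilted law without extra input. Resolving this requires either (a) upgrading the $L^1$ control to $L^2$ on the good event — plausible because under $\PLv{t_k}$ the small-$\MA$ conditioning suppresses large upward fluctuations of $A$, so $A$ at level $-a-1$ should have sub-Gaussian tails with an $a$-dependent but $k$-uniform scale, giving $\lpnorm{A}{k-a-1}{2}\lesssim_a \lpnorm{A}{k-a-1}{1}$ on $\evnt$ for a suitably chosen $\evnt$; or (b) re-deriving the relevant pieces of the proof of Theorem~\ref{thm:pt-liouville-decay-l1} to extract an $L^2$ statement directly. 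I expect (a), implemented via a truncation $\evnt\subseteq\{\sup_{v\in\cL_k,\,|v\text{ below }u|\le a+1}|A_v-A_u|\le C\sqrt{a}\log a\}$ together with the $L^1$ bound, to be the cleanest route; the bookkeeping of which $k$-independent constants are allowed to depend on $a$, and checking that all of them survive the $\limsup_k$ and then vanish or stay bounded as $a\to\infty$, is where the real care is needed.
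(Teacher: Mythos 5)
Your skeleton (decompose at an ancestor, use Theorem \ref{thm:pt-liouville-decay-l1} for the ancestor term, use centredness of balanced increments for the rest) is the right one, but there are two genuine gaps. The first and most serious is your treatment of the ancestor-squared term. You propose to control $\embdELv{t_k}[\embdbalBRW_u^2\mathbf 1_{\evnt}]$ by upgrading the $L^1$ bound to an $L^2$ bound via sub-Gaussian tails under the tilted law, or via a Cauchy--Schwarz comparison $\embdPLv{t_k}(B)\le \P(B)^{1/2}\cdot(\text{moment bound})$. The paper explicitly states that its methods do not yield $L^2$ control, and the Cauchy--Schwarz route fails quantitatively: the relevant moment ratio is $\E[e^{-2t_k\MA}]^{1/2}/\E[e^{-t_k\MA}]=e^{-d^k(h(2\lambda)/2-h(\lambda))+o(d^k)}$, and since $h(2\lambda)\le 2h(\lambda)$ this factor can be of order $e^{cd^k}$, destroying the estimate. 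The paper sidesteps the $L^1$-to-$L^2$ gap entirely by building the truncation into $\evnt$ itself: it takes $\evnt=\{|\embdbalBRW_{\anc{v}{a}}|\le d\sqrt{\epsilon_a},\ |\embdbalBRW_z|\le d\sqrt{\epsilon_a}\ \text{for the }d-1\text{ candidates }z\}$, shows $\embdPLv{t_k}(\evnt^c)\le\sqrt{\epsilon_a}+o_k(1)$ by exchangeability of $(\embdbalBRW_u)_{u\in\rGen{a}{k}}$, Markov's inequality and Theorem \ref{thm:pt-liouville-decay-l1}, and then the surviving term is bounded \emph{deterministically} by $d^2\epsilon_a$ on $\evnt$. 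This is exactly why the corollary is stated only for the truncated correlation.

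The second gap is your choice of decomposition point. You expand around the \emph{common} ancestor $u$ at level $-(a+1)$, and claim the two increments $\embdbalBRW_v-\embdbalBRW_u$ and $\embdbalBRW_w-\embdbalBRW_u$ are ``uncorrelated and mean-zero-ish.'' They are not: their first edges are siblings in $E(u)$, with covariance $-1/(d-1)$, so the increment--increment term contributes an amount of order $1/(d-1)$ that does not vanish as $a\to\infty$; Cauchy--Schwarz against an $O_a(1)$ increment bound gives $O_a(1)$, not $o_a(1)$, and centredness of each increment separately does not kill the product of two correlated increments. The paper instead decomposes one level lower, writing $\embdbalBRW_v=\embdbalBRW_{\anc{v}{a}}+\incr{v}{a}$ with $\anc{v}{a}\neq\anc{w}{a}$ distinct vertices at level $-a$: then $\incr{v}{a}$ and $\incr{w}{a}$ are independent balanced Branching Random Walks in disjoint subtrees, and after conditioning on level $-a$ and factorising the chaos, each satisfies $\E[\incr{w}{a}\,e^{-\lambda\base^k d^{-(k-a)}e^{\gamma z-\frac{\gamma^2}{2}(k-a)}\MA_{\anc{w}{a}}}]=0$ exactly, by the sum-to-zero property and exchangeability of the leaves of its own subtree. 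All cross terms therefore vanish identically, leaving only the ancestor product, which is handled by the truncation above. Without moving the decomposition down one level and without the pointwise truncation in $\evnt$, your argument does not close.
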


It is important to note that the decay of correlations only holds on a 
sequence of events $\evnt$ with high probability.
This can traced back to the decay of the $L^1$ norm in Theorem \ref{thm:pt-liouville-decay-l1}, which does not give true correlations under $\PLv{t_k}$. 
For a decay of true correlations, it would suffice to prove Theorem \ref{thm:pt-liouville-decay-l1} for the $L^2$ norms $\lpnorm{A}{k-a}{2}$,
but our methods do not allow us to conclude that.

%Nonetheless, we conjecture 
%that it should be possible to strengthen the statement in 
%Corollary \ref{cor:decay-correlation-tree},
%in the sense that the family of sets on which the decay of correlation holds depends only on a single base point.
%More precisely, we conjecture that for any $v \in \cL$ 
%there is a family of set $(\cB_{v, a, k})_{ a \in \N}$ with $\PLv{t_k} (\cB_{v,a,k}) \to 1$ as $k\to \infty$ and then $a \to \infty$, such that 
%\begin{equation}
%\label{eq:pt-decay-of-correlations-uniform-y}
%\lim_{a \to \infty} \limsup_{k \to \infty}
%%\lim_{d_{\cR} (v,w)\to \infty} \limsup_{k \to \infty}
%%\sup_{w \in B_{\cR_k}(v, a) } \ELv{t_k} [ A_v A_w \mathbf{1}_{ \cB_{x,r,k}}  ] = 0 ,
%\sup_{w\in \cL\colon   d_\TT(w,v)=2a} \ELv{t_k} [ A_v A_w \mathbf{1}_{ \cB_{v,a,k}}  ] = 0 .
%\end{equation}

\subsection{Additional results}
\label{sec-additional}

Our methods also apply to some extent to the  Branching Random Walk $(S_v)_{v\in \cT}$.  
However, Theorem \ref{thm:liouville-mgf-asymptotics} is not true with $\MS$ in place of $\MA$.
Indeed, one can prove that 
\begin{equation}
\label{eq:gmc-unbalanced-decomposition}
\MS \stackrel{d}{=} e^{\gamma X - \frac{\gamma^2}{2}\frac{1}{d-1}} \MA, 
\end{equation}
where $X$ is an independent standard Gaussian random variable,
and thus, by Theorem \ref{thm:liouville-mgf-asymptotics} it is easy to see that there are constants $0<c\leq C$ such that
\begin{equation}
c(\log t)^2 \leq -\log \E[e^{-t\MS}] \leq C (\log t )^2.
\end{equation}
One can mimic the asymptotics \eqref{eq:liouville-mgf-asymptotics} for $(S_v)_{v\in \cT}$, if one replaces $\MA$ by $\frac{1}{2}(\Mplus + \Mminus )$.
The result, which is completely analogous to Theorem \ref{thm:liouville-mgf-asymptotics}, is as follows.

\begin{theorem}
\label{thm:sinh-mgf-asymptotics}
Let $(S_v)_{v\in \cT}$ be the Branching Random Walk on $\cT$ and let $\Mplus$, $\Mminus$ be the Gaussian multiplicative chaoses with respect to $S$ and $-S$.
Let $\lambda >0$ and let $\base$ be as in \eqref{eq:base-sequence-tk}.
Then there is a function $\tilde h\colon (0,\infty)\to \R$,
such that as $k\to \infty$
\begin{equation}
\label{eq:sinh-mgf-asymptotics}
-\frac{1}{d^k} \log \E[ e^{-\frac{\lambda}{2} \base^k (\Mplus + \Mminus)} ] \to \tilde h(\lambda) ,
\end{equation}
where the convergence is uniform on compact sets.
\end{theorem}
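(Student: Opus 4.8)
\textbf{Proof proposal for Theorem \ref{thm:sinh-mgf-asymptotics}.}
The plan is to follow the same large-deviation-on-generations strategy that underlies Theorem \ref{thm:liouville-mgf-asymptotics}, but carried out for the symmetric pair $(\Mplus,\Mminus)$ built from the \emph{unbalanced} walk $S$. The starting point is the recursive structure of the multiplicative chaos along the tree. Writing $S_v = X_e + S'_v$, where $X_e$ is the first increment on the path to $v$ through the subtree rooted at the $i$-th child $o_i$ of the root and $S'$ is an independent copy of the walk on that subtree, one gets the exact branching identities
\begin{equation}
\label{eq:recursion-Mplus-Mminus}
\Mplus \stackrel{d}{=} \frac{1}{d}\sum_{i=1}^d e^{\gamma X_i - \frac{\gamma^2}{2}} \Mplus_i, \qquad
\Mminus \stackrel{d}{=} \frac{1}{d}\sum_{i=1}^d e^{-\gamma X_i - \frac{\gamma^2}{2}} \Mminus_i,
\end{equation}
where $(X_i)_{i=1}^d$ are i.i.d.\ standard Gaussians and $(\Mplus_i,\Mminus_i)_{i=1}^d$ are i.i.d.\ copies of $(\Mplus,\Mminus)$, independent of $(X_i)$. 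The key observation is that the \emph{same} family $(X_i)$ appears in both recursions, which is why one must track the joint law of the pair and work with $\frac12(\Mplus+\Mminus)$ rather than either factor alone. Define $\tilde\Lambda_k(\lambda) = -d^{-k}\log \E[e^{-\frac{\lambda}{2}\base^k(\Mplus+\Mminus)}]$; iterating \eqref{eq:recursion-Mplus-Mminus} one generation and using $\base^k = d\,\base^{k-1} e^{\gamma^2/2}$ gives a recursion of the form
\begin{equation}
\label{eq:Lambda-tilde-recursion}
\tilde\Lambda_k(\lambda) = -\frac{1}{d}\log \E\Big[ \prod_{i=1}^d e^{-d^{k-1}\,\Theta_{k-1}(\lambda, X_i)} \Big],
\end{equation}
where $\Theta_{k-1}(\lambda,x)$ is built from $\tilde\Lambda_{k-1}$ evaluated at the two shifted arguments $\lambda e^{\gamma x}$ and $\lambda e^{-\gamma x}$ coming from the $+$ and $-$ chaoses. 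The presence of $d^{k-1}$ in the exponent is what makes a Laplace/large-deviation analysis in $x$ applicable as $k\to\infty$.

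The next step is to transfer the a priori bounds of Lemma \ref{lem:liouville-mgf-asymptotics-bounds}: from \eqref{eq:gmc-unbalanced-decomposition}, $\Mplus+\Mminus$ is comparable to $\MA(e^{\gamma X} + e^{-\gamma X})$ for a single Gaussian $X$, so $\Mplus+\Mminus \geq 2\sqrt{\Mplus\Mminus}$ is bounded below by a constant times $\MA$ in distribution (after conditioning $X$ to a bounded set of positive probability), and bounded above similarly. This yields $c\,\base^{k\alpha} \le d^k\,\tilde\Lambda_k(\lambda) \cdot (\text{normalisation}) \le C\base^{k\alpha}$-type control, i.e.\ the sequence $(\tilde\Lambda_k(\lambda))_{k}$ is bounded above and below (uniformly on compacts in $\lambda$), and equicontinuous in $\lambda$ (monotonicity in $\lambda$ plus the a priori bounds give a uniform modulus). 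Then, exactly as in the proof of Theorem \ref{thm:liouville-mgf-asymptotics}, one sets up a sub/super-additivity or monotone-limit argument along the recursion \eqref{eq:Lambda-tilde-recursion}: the scaling relation $\tilde h(\base\lambda) = d\,\tilde h(\lambda)$ (which holds at the level of the $\tilde\Lambda_k$ up to a shift of index) reduces everything to $\lambda\in[1,\base]$, and one extracts subsequential limits $\tilde h$ and shows via \eqref{eq:Lambda-tilde-recursion} that any limit point is a fixed point of the limiting functional operator. Uniqueness of that fixed point among continuous functions with the right growth — using the contraction coming from the $1/d$ prefactor and the $d^{k-1}$-Laplace concentration forcing the $X_i$ to their typical values — identifies the limit and upgrades subsequential to full convergence, uniformly on compacts.

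The main obstacle, and the place where the argument genuinely differs from the balanced case, is \textbf{handling the correlation between the $+$ and $-$ chaoses inside the Laplace analysis of \eqref{eq:Lambda-tilde-recursion}}. In the balanced model the single recursion for $\MA$ closes on itself, but here $\Theta_{k-1}(\lambda,x)$ genuinely depends on $\tilde\Lambda_{k-1}$ at \emph{both} $\lambda e^{\gamma x}$ and $\lambda e^{-\gamma x}$, and one must rule out that the large-deviation cost of pushing $X_i$ large (cheap for the $e^{-\gamma X_i}$ factor, expensive for $e^{+\gamma X_i}$, and vice versa) produces a different exponent or destroys convexity. The resolution I would pursue is to show that the relevant variational problem in \eqref{eq:Lambda-tilde-recursion} is still governed, to leading order, by $x$ near $0$: the Gaussian density penalises $|x|$ quadratically while the gain from either exponential shift is at most polynomial in $\base^{|x|}$ after dividing by $d^{k-1}$, so the minimiser concentrates and the leading asymptotics are the same as if $X_i\equiv 0$, i.e.\ $\tilde h$ solves the same scaling equation $\tilde h(\base\lambda)=d\tilde h(\lambda)$ with the same exponent $\alpha$. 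Convexity of the associated function $\tilde f_\lambda(x) = \tilde h(\lambda e^{\gamma x})$, needed for the downstream applications, would then follow from the same convexity-preservation argument as in Lemma \ref{lem:f-variational-convex}, the symmetry $x\mapsto -x$ of the pair $(\Mplus,\Mminus)$ giving, in addition, that $\tilde f_\lambda$ is symmetric about a point. Positivity of $\tilde h$ is immediate from the a priori lower bound.
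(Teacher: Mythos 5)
Your proposal diverges from the paper's argument in a way that opens a genuine gap. The central problem is that your one\hyp{}step recursion \eqref{eq:Lambda-tilde-recursion} does not close on the one\hyp{}parameter family $\tilde\Lambda_{k-1}(\cdot)$. After one generation the inner object attached to the child $i$ is
$-\log \E\big[ e^{-\frac{\lambda}{2}\base^{k-1}( e^{\gamma X_i}\Mplus + e^{-\gamma X_i}\Mminus)}\big]$,
i.e.\ the \emph{imbalanced} functional $\tilde\Lambda^u(t_{k-1})$ of \eqref{eq:mgf-sinh-u-definition} with $u=\gamma X_i$. Because $\Mplus$ and $\Mminus$ are correlated, this is not determined by $\tilde\Lambda_{k-1}$ evaluated at $\lambda e^{\gamma X_i}$ and $\lambda e^{-\gamma X_i}$; it is a genuinely two\hyp{}parameter quantity, and it is exactly the object the paper singles out (see the discussion around \eqref{eq:lower-bound-sinh-imbalance}--\eqref{eq:sinh-imbalance}) as the one its methods cannot control beyond first order. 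So your fixed\hyp{}point scheme would require, as input, convergence of a strictly larger family than the one you are trying to prove convergence for. The subsequent "uniqueness of the fixed point via the contraction coming from the $1/d$ prefactor" is asserted without any metric in which a contraction holds, and the paper uses no such argument anywhere: the limit is identified as $\inf_x\{x+\tilde I(x)\}$ via a Varadhan\hyp{}type bound, not as a fixed point. Your derivation of the a priori bounds is also not sound as written: $\sqrt{\Mplus\Mminus}$ is \emph{not} bounded below by a constant times $\MA$ (one factor can be tiny while the other is huge), and conditioning $X$ to a bounded set cannot produce an \emph{upper} bound on a Laplace transform. (The AM--GM idea can be salvaged --- the root increment cancels in the product, and $\P(\MAplus\MAminus\le s^2)\le 2\P(\MA\le s)$ by a union bound --- but that is not what you wrote; the paper instead uses the observation that at least half of the $(S_v)_{v\in\Gen{\ell}}$ share a sign, which yields $d^\ell/2$ i.i.d.\ copies of $\MS$ to run a Chernoff bound on.)

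The paper avoids the non\hyp{}closing recursion entirely by expanding all the way to generation $k$ in one step: conditioning on $(S_v)_{v\in\Gen{k}}$ and using independence of the subtree pairs $(\Mplus_v,\Mminus_v)$ gives
$\E[e^{-\frac{\lambda}{2}\base^k(\Mplus+\Mminus)}]=\E[e^{-d^k\tilde H_k}]$ with $\tilde H_k$ the empirical average over $\Gen{k}$ of the \emph{fixed}, $k$\hyp{}independent function $\tilde G$ of \eqref{eq:definition-G-sinh}. The correlation between the two chaoses is thereby absorbed once and for all into the definition of $\tilde G$. One then checks that the multiplicative continuity estimate (Lemma \ref{lem:continuity-G-sinh}) and the small\hyp{}increments event for the unbalanced walk give the same subadditivity as in Lemma \ref{lem:subadditivity-empirical-measure} --- balancedness of $A$ is never used there --- so the weak LDP of Proposition \ref{prop:weak-ldp} and the Varadhan argument transfer verbatim, with Lemma \ref{lem:sinh-mgf-asymptotics-bounds} supplying the tightness that substitutes for compact level sets. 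If you want a workable proof, you should adopt this architecture rather than iterate the one\hyp{}generation recursion.
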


We emphasise that, apart from possibly different functions $h$ and $\tilde h$,
the asymptotics of the negative exponential moments of $\MA$ and $\frac{1}{2}(\Mplus+ \Mminus)$ are identical with the same values for $\alpha$ and $\kappa$,
which only depend $\gamma$ according to \eqref{eq:exponents-first-order}.

A phenomenon similar to Theorem \ref{thm:pt-liouville-decay-l1} is believed to occur also for the probability measure $\PShG{t}$ defined by
\begin{equation}
\label{eq:pt-sinh-definition}
\PShG{t} (B) = \frac{\E[\mathbf{1}_B e^{-t(\Mplus + \Mminus)}]}{\E [e^{-t (\Mplus + \Mminus)} ]}, \qquad  B \in \sigma( X_e \colon e\in \cT ) 
\end{equation}
but our methods are not strong enough to provide such a result.
More precisely, we expect that there is a sequence $(\tilde \epsilon_a)_{a\in \N}$ independent of $k$ and with $\tilde \epsilon_a \to 0$ as $a \to \infty$, such that as $k\to \infty$ 
\begin{equation}
\label{eq:pt-sinh-decay-l2}
\PShG{t_k} ( \lpnorm{S}{k -a}{2} \leq \tilde \epsilon_a ) \to 1.
\end{equation}

To prove \eqref{eq:pt-sinh-decay-l2} it seems that the key is to understand how an imbalance between $\Mplus$ and $\Mminus$ changes the second order of the large $t$ asymptotics of the exponential moment.
More precisely, a similar attempt as for Theorem \ref{thm:pt-liouville-decay-l1} leads to studying the asymptotics of 
\begin{equation}
\label{eq:mgf-sinh-u-definition}
\tilde \Lambda^u(t_k) = -\log \E [e^{- \frac{t_k}{2} (e^u \Mplus + e^{-u} \Mminus)} ] , \qquad u \in \R
\end{equation}
as $k\to \infty$.
For every $k\in \N$ this is a symmetric function in $u$, and one can even prove that it is convex in $u$. In particular, it has a minimum at $u=0$, so that
\begin{equation}
\label{eq:lower-bound-sinh-imbalance}
\tilde \Lambda^u(t_k) \geq \tilde \Lambda^0(t_k).
\end{equation}
The proof of Theorem \ref{thm:pt-liouville-decay-l1} is based
on the strict convexity, for appropriate $\lambda$,
of $\lim_k d^{-k}\Lambda(e^u t_k)$ in $u$,
which fails with $\tilde \Lambda^u(\cdot)$ replacing $\Lambda(e^u\cdot)$.
In fact, it is even true that
%$u$ vanishes completely in the first order of the asymptotics of \eqref{eq:mgf-sinh-u-definition} as $k\to \infty$,
%i.e.\ for all $u\in \R$ we can prove that
\begin{equation}
-\frac{1}{d^k} \log \E [e^{- \frac{t_k}{2} (e^u \Mplus + e^{-u} \Mminus)} ] \to \tilde h(\lambda) ,
\label{eq:sinh-imbalance}
\end{equation}
where $\tilde h$ is as in Theorem \ref{thm:sinh-mgf-asymptotics}.
Ultimately, $u$ is related to $(S_v)_{v\in \Gen{k-a}}$ and thus, 
since the lower bound \eqref{eq:lower-bound-sinh-imbalance} is independent of $u$,
it does not by itself lead to \eqref{eq:pt-sinh-decay-l2}.
In spite of \eqref{eq:sinh-imbalance}, the dependence on $u$ may enter in the second order of the asymptotics of $\tilde \Lambda^u$,
but our methods are not strong enough to yield this level of precision.

\subsection{Multiplicative chaos for Branching Random Walks}

The self-similarity of $\cT$ carries over to the Branching Random Walks $S$ and $A$ as follows.
For $k\in \N$, $w\in \Gen{k}$ and $n\in \N \cup \{ \infty \}$ we consider the Branching Random Walks $(S_v)_{v\in \cT_{k+n}}$ and $(A_v)_{v\in \cT_{k+n}}$ 
and write $(S_v^w)_{v\in \cT_n^w}$ and $(A_v^w)_{v\in \cT_n^w}$ for the Branching Random Walks on $\cT_n^w$ started from $w$,
i.e.\ for all $v\in \cT_n^w \subseteq \cT_{k+n}$ we have
\begin{equation}
S_v^w = S_v - S_w, \qquad A_v^{w} = A_v - A_w.
\end{equation}
Then $(S_v)_{v\in \cT_k}$ is independent of $(S_v^w)_{v\in \cT_n^w}$, and moreover
\begin{equation}
\label{eq:brw-subtree-same-law}
(S_v^w)_{v\in \cT_n^w} \stackrel{d}{=} (S_v)_{v\in \cT_n}.
\end{equation}
Similarly, we have that $(A_v)_{v\in \cT_k}$ is independent of $(A_v^w)_{v\in \cT_n^w}$ and
\begin{equation}
\label{eq:balanced-brw-subtree-same-law}
(A_v^w)_{v\in \cT_n^w} \stackrel{d}{=} (A_v)_{v\in \cT_n}.
\end{equation}

The identities \eqref{eq:brw-subtree-same-law} and \eqref{eq:balanced-brw-subtree-same-law} give rise to the following recursions for $\MS$ and $\MA$,
which are useful in the sequel.
Let $(\MS_v)_{v\in \Gen{k}}$ and $(\MA_v)_{v\in \Gen{k}}$ be collections of independent and identically distributed random variables with $\MA_v\stackrel{d}{=} \MA$ and independent of $(A_v)_{v\in \Gen{k}}$,
and analogously $\MS_v\stackrel{d}{=} \MS$ and independent of $(S_v)_{v\in \Gen{k}}$. Then we have
\begin{equation}
\label{eq:brw-gmc-recursion}
\MS \stackrel{d}{=} \frac{1}{d^k} \sum_{v\in \Gen{k}} e^{ \gamma S_v - \frac{\gamma^2}{2} k } \MS_v
\end{equation}
and similarly
\begin{equation}
\label{eq:balanced-brw-gmc-recursion}
\MA \stackrel{d}{=} \frac{1}{d^k} \sum_{v\in \Gen{k} } e^{ \gamma A_v - \frac{\gamma^2}{2} k } \MA_v ,
\end{equation}
where now $A$ is the Branching Random Walk in \eqref{eq:def-brw-balanced}.
Indeed, to prove \eqref{eq:brw-gmc-recursion} we let $k, n \in \N$ and use \eqref{eq:brw-subtree-same-law} to obtain
\begin{align}
\label{eq:brw-gmc-recursion-prelimit}
M^{S, k+n} &= \frac{1}{d^{k+n}} \sum_{ w\in \Gen{k+n} } e^{\gamma \big( S_{\anc{w}{n}} + (S_w - S_{\anc{w}{n}} )  \big)  - \frac{\gamma^2}{2} (n+k) } \nnb
&\stackrel{d}{=} \frac{1}{d^k} \sum_{v \in \Gen{k}} e^{\gamma S_v - \frac{\gamma^2}{2} k } \frac{1}{d^n} \sum_{w\in \Gen{n}^v} e^{\gamma S_w^v - \frac{\gamma^2}{n}} 
= \frac{1}{d^k} \sum_{ v\in \Gen{k} } e^{\gamma S_v - \frac{\gamma^2}{2} k } M_v^{S,n},
\end{align}
where $(M_v^{S,n})_{v\in \Gen{k}}$ are independent and distributed as $M^{S,n}$ in \eqref{eq:gmc-brw-volume-1-approximation}.
Thus, since  $(S_v)_{v\in \Gen{k}}$ is independent of $(M_v^{S,n})_{v\in \Gen{k}}$,
the claimed identity \eqref{eq:brw-gmc-recursion} follows from \eqref{eq:brw-gmc-recursion-prelimit} by taking the limit $n\to \infty$.
The proof of \eqref{eq:balanced-brw-gmc-recursion} is identical with $A$ in place of $S$.

\subsection{Notation}

We use the the standard Landau big-$O$ and little-$o$ notation,
i.e.\ for real valued functions $f,g\colon [0,\infty) \to \R$ we write $f= O(g)$ if $|f(u)/g(u)| \leq C$ for all $u\geq 0$ for some universal constant $C>0$,
and $f = o_u(g)$ if $|f(u)/g(u)| \to 0$ for $u\to \infty$.
When it is clear from the context with respect to which parameter the little-$o$ notation is to be understood, we omit the subscript and write $f = o(g)$.  
Moreover, we use the standard notation $\R_0^+ = [0,\infty)$.

\section{Relation to infinite volume models in Euclidean field theory}
\label{sec:qft-models}

In this section we explain how the probability measures $(\PLv{t_k})_{k\in \N}$ from \eqref{eq:pt-liouville-definition} are related to a massless Liouville Euclidean field theory in the plane,
which is a probability measure on the space of dis\-tri\-butions $S'(\R^2)$ formally defined as the weak limit $m\to 0$ and then $L\to \infty$ of the probability measures
\begin{equation}
\label{eq:liouville-qft-formal-definition}
\ctLmLv{L}{m} (d\phi) \propto \exp \Big[ -\lambda \int_{\Lambda_{L}} e^{ \sqrt{\beta} \phi(x) } dx  \Big] \ctLmGFF{L}{m}(d\phi),
\end{equation}
where $\Lambda_L = [0,L)^2 \subseteq \R^2$ is a box of side length $L$,\footnote{
It is also possible to consider other choices for the box $\Lambda_L$,
e.g.\ one which exhausts the full plane as $L\to \infty$.
This however leads to a more elaborate notation.} $\lambda>0$ and $\beta \in (0, 8\pi)$ are coupling constants,
and $\ctLmGFF{L}{m}$ is the law of the massive Gaussian Free Field with average $0$ on $\Lambda_L$ to be defined below.
Our analysis suggests, but does not prove, that the sequence $(\ctLmLv{L}{m})_{L>0, m>0}$ is tight.
This would give rise to a massless Liouville field theory in infinite volume,
%see also the discussion in Section \ref{sec:literature}.
which has not been rigorously constructed so far; we emphasize however that the removal of the zero mode is not innocent. We refer further to the discussion in Section \ref{sec:literature}.
The most interesting case is when the reference measure $\ctLmGFF{L}{m}$ is log-correlated,
for which we restrict attention to dimension $2$.
Since we eventually want to draw a comparison between $\ctLv{L}$ and $\PLv{t_k}$,
we choose $L=2^k$, $k\in \N$ and assume that $\PLv{t_k}$ is constructed from a $4$-ary balanced Branching Random Walk.

The expression in \eqref{eq:liouville-qft-formal-definition} is ill-defined, since the Gaussian Free Field is a.s.\ not a regular function on $\R^2$,
but takes values in the space of distributions $S'(\R^2)$.
%In particular, the pointwise evaluation $\phi(x)$ is a.s.\ not well-defined.
To give meaning to \eqref{eq:liouville-qft-formal-definition} a regularisation procedure is necessary,
which then allows to construct $\ctLmLv{L}{m}$ as a weak limit.

To make the connections to the balanced Branching Random Walk $(A_v)_{v\in \cT}$ and the measure $\PLv{t_k}$ evident,
we choose the following space regularisation:
let $\Lambda_{L, \epsilon} = \Lambda_L \cap \epsilon \Z^2$ be the discretisation of $\Lambda_L$ of mesh size $\epsilon>0$,
where we assume throughout this section that $\epsilon = 2^{-n}$ for $n\in \N$.
Moreover, we write $X_{\epsilon} = \{ \varphi \colon \Lambda_{\infty,\epsilon}  \to \R\}$ for the set of real valued functions on $\Lambda_{\infty,\epsilon} \equiv \R^2 \cap (\epsilon\Z)^2$.
Then the discretisation of the measure in \eqref{eq:liouville-qft-formal-definition} is the probability measure on $X_{\epsilon}$ with density
\begin{equation}
\label{eq:discrete-massive-liouville-density}
\regmLv{L}{m}{\epsilon}(d\phi)
\propto
\exp{ \Big[-\lambda  \,\epsilon^2 \sum_{x \in \Lambda_{L,\epsilon} } \wick{e^{\sqrt{\beta} \phi_x}}_\epsilon\Big]} d\regmGFF{L}{m}{\epsilon} (\phi),
\end{equation}
where now $\regmGFF{L}{m}{\epsilon}$ is the density of the massive discrete Gaussian Free Field with average zero on $\Lambda_{L,\epsilon}$
and the regularisation of the exponential function is given by
\begin{equation}
\label{eq:exp-wick-ordering}
\wick{ e^{\sqrt{\beta} \phi_x} }_\epsilon = \epsilon^{\beta/4\pi} e^{\sqrt{\beta}\phi_x}.
\end{equation}

The reference measure $\regmGFF{L}{m}{\epsilon}$ in \eqref{eq:discrete-massive-liouville-density} is as follows.
Let $\nu_{m,\epsilon}^{\GFF}$ be the discrete Gaussian Free Field on $\R^2$ with mass $m>0$,
i.e.\ the centred Gaussian field on $X_{\epsilon}$ with covariance
\begin{equation}
c_{m,\epsilon} =
% \int_0^\infty e^{-s(-\Delta^\epsilon + m^2) } ds = 
(-\Delta^\epsilon + m^2 )^{-1},
\end{equation}
where $\Delta^\epsilon$ is the lattice Laplacian on $\R^2 \cap \epsilon \Z^2$ given by $\Delta^\epsilon f (x) = \frac{1}{\epsilon^2}\sum_{y\sim x}  ( f(y) - f(x) )$.
Note that as $\epsilon \to 0$, we have (by the random walk representation of the covariance as a Green function)
\begin{align}
\label{eq:variance-massive-gff}
&c_{m,\epsilon}(x,y)=\delta_{x,y}+\sum_{t=1}^\infty \frac{1}{2\pi t} e^{-|x-y|^2/2t} e^{-mt} \sim 
\frac{1}{2\pi} \log \frac{1}{m (|x-y|\vee \epsilon)}+O_{\epsilon,m}(1),\nnb
& c_{m,\epsilon} (x,x) \sim \frac{1}{2\pi} \log \frac{1}{m\epsilon}+O_{\epsilon,m}(1).
\end{align}
For $f \in X_{\epsilon}$ let $\avg{f}_{\Lambda_{L,\epsilon}} = |\Lambda_{L,\epsilon}|^{-1} \sum_{x\in \Lambda_{L,\epsilon}} f(x)$ be the average of $f$ on $\Lambda_{L,\epsilon}$.
Let now $\Phi^{m} \sim \nu_{m,\epsilon}^{\GFF}$ and define 
\begin{equation}
\label{eq:massive-gff-without-avarage-on-L-box}
\Phi^{L,m} = \Phi^{m} - \avg{\Phi^{m}}_{\Lambda_{L,\epsilon}}.
\end{equation}
Then the reference measure $\regmGFF{L}{m}{\epsilon}$ in \eqref{eq:discrete-massive-liouville-density} is given by the law of $\Phi^{L,m}$.
Note that $\Phi^{L,m}$ is is a random field on $\Lambda_{\infty, \epsilon}$ and as $\epsilon\to 0$ converges to a Gaussian field on $\R^2$.
We mostly are interested in the restriction of this field to $\Lambda_L$.
We often omit the regularisation parameter $\epsilon$ in the notation, when it is irrelevant to the argument and doing so  does not cause confusion. Using \eqref{eq:variance-massive-gff} we then obtain that
 the covariance of $\Phi^{L,m}$  on $\Lambda_{L,\epsilon}$ is of the form
\begin{align}
\label{eq:variance-L-gff}
\cov(\Phi_x^{L,m}, \Phi_y^{L,m}) &= \frac{1}{2\pi} \log\frac{L}{|x-y|\vee \epsilon} + O_{L,\epsilon}(1), \qquad x,y \in \Lambda_{L,\epsilon}, 
\end{align}
and that $(\Phi^{L,m})_{m>0, \epsilon>0}$, restricted to $\Lambda_{L,\epsilon}$,  converges in distribution to a log-correlated Gaussian field $\Phi^{L,0}$ on $\Lambda_L$ as $\epsilon \to 0$ and then $m\to 0$ with
\begin{align}
\cov(\Phi_x^{L,0}, \Phi_y^{L,0}) &= \frac{1}{2\pi} \log \frac{L}{|x-y|} + O(1), \qquad x,y\in \Lambda_L, x\neq y.
\end{align}
The regularisation procedure \eqref{eq:exp-wick-ordering} is known as Wick ordering at the fixed length scale $L_0=1$.
%In general Wick ordering can be defined rigorously for regular functions of Gaussian random variables through Hermite polynomials, i.e.\ for $m \in \N$ and $\zeta \sim \cN(0,\sigma^2)$, we have
%\begin{equation}
%\label{eq:wick-ordering-monomial}
%\wick{\zeta^m} = \sigma^m H_m(\zeta/\sigma) ,
%\end{equation}
%where $H_m$ denotes the $m$-th Hermite polynomial.
%The Wick ordering of the monomials in \eqref{eq:wick-ordering-monomial} gives rise to that of the exponential function by Wick ordering its power series, i.e.\ we define
%\begin{equation}
%\label{eq:wick-ordering-exponential}
%\wick{e^{\zeta}} = \sum_{m=0}^\infty \frac{1}{m!}\wick{\zeta^m} = e^{\zeta - \sigma^2/2} ,
%\end{equation}
%where the last identity follows from standard results for Hermite polynomials.
%
%In our case \eqref{eq:exp-wick-ordering} differs from standard Wick ordering defined through Hermite polynomials as the dependence on $L$ in \eqref{eq:variance-L-gff} is not included in the regularisation.
%
With $\phi \sim \regmGFF{L}{m}{\epsilon}$ and \eqref{eq:variance-L-gff} it can be rewritten as
\begin{equation}
\label{eq:wick-ordering-gmc}
\wick{ e^{\sqrt{\beta} \phi_x} }_\epsilon
= e^{\sqrt{\beta}\phi_x - \frac{\beta}{4\pi} \log \frac{1}{\epsilon}}
= e^{\sqrt{\beta}\phi_x - \frac{\beta}{4\pi} \log \frac{L}{\epsilon} + \frac{\beta}{4\pi} \log L }
= L^{\beta/4\pi} e^{\sqrt{\beta} \phi_x -\E[(\sqrt{\beta}\phi_x)^2] + O(1)}.
\end{equation}
Thus, in this particular case, the Wick ordering gives, in the limit $\epsilon\to 0$, the Gaussian multiplicative chaos on $\Lambda_L$ of the Gaussian Free Field with parameter $\beta$, and the fact that we renormalised at length scale $L_0=1$ leads to the volume dependent multiplicative correction $L^{\beta/4\pi}$.
%Moreover, we see from \eqref{eq:variance-L-gff} that the range $(0,8\pi)$ for $\beta$ is precisely the interval for which $\regGMC{L}{\epsilon}{\sqrt{\beta}}(dx)$ has a non-trivial limit as $\epsilon\to 0$.
%
Writing
\begin{equation}
\label{eq:gff-gmc-approximation-full-mass}
\regGMC{L}{\epsilon}{\sqrt{\beta}}(\phi)
%= \int_{\Lambda_{L,\epsilon}} \regGMC{L}{\epsilon}{\sqrt{\beta}}(\phi) (dx)
= \epsilon^2 \sum_{x\in \Lambda_{L,\epsilon}} e^{\sqrt{\beta}\phi_x - \frac{\beta}{4\pi} \log \frac{1}{\epsilon}}
\end{equation}
for the full mass of the multiplicative chaos, 
we see from \eqref{eq:variance-L-gff} that the range $(0,8\pi)$ for $\beta$ is precisely\footnote{For standard GFF with variance $-\log|t-s|+O(1)$, 
the standard range for non-degeneracy of the GMC is $(0,\sqrt{2d})$ when the ambient space is $\R^d$, see
\cite{MR3274356}. Hence, with our normalization
of the GFF, the correct range is $(0,\sqrt{2\pi\cdot 2d})=(0,\sqrt{8\pi})$ since $d=2$.}   the interval for which $\regGMC{L}{\epsilon}{\sqrt{\beta}}(dx)$ has a non-trivial limit as $\epsilon\to 0$.

Now, the density \eqref{eq:discrete-massive-liouville-density} becomes
\begin{equation}
\label{eq:discrete-gff-liouville-density-with-gmc}
\regmLv{L}{m}{\epsilon}(d\phi)
\propto
\exp{ \big[ -\lambda \regGMC{L}{\epsilon}{\sqrt{\beta}} (\phi) \big]  } d\regmGFF{L}{m}{\epsilon} (\phi).
\end{equation}
We emphasize that the substraction of the zero mode, see \eqref{eq:massive-gff-without-avarage-on-L-box}, changes dramatically the model. On the Gaussian free field side, it destroys the conformal invariance. On the asymptotics side, as explained in the beginning of Section \ref{sec-additional}, the tails of the law of the total mass of the Gaussian multiplicative chaos (GMC) are very different for the model with and without zero mode.

We next define a measure analogously to \eqref{eq:discrete-gff-liouville-density-with-gmc}, but now with respect to a reference measure $\regmbalBRW{L}{m}{\epsilon}$ that is constructed from a suitable Branching Random Walk.
This measure is then closely related to $\PLv{t_k}$ for $t_k= \lambda \base^k$.
To this end, we start with a massive Branching Random Walk\footnote{There are other ways to introduce the massive Branching Random Walk on $\Lambda_{\infty,\epsilon}$,
for example starting with embedding the tree $\TT$ (illustrated in Figure \ref{fig:brw-rooted-at-infty})
in $\R^2$, identifying $\Lambda_{\infty,\epsilon}$ with the vertices corresponding to $\horo=\log_2(\epsilon)$,
and using the killed Laplacian on that graph with killing parameter $m$ at levels with  $\horo \leq 0$.
It is not hard to see that the results are similar.}
on $\Lambda_{\infty, \epsilon}$ obtained as follows.
Let $m=2^{-K}$, where we assume throughout that $K>k$.
Then we subdivide $\R^2$ into boxes of side length $1/m=2^K$ with lower left corners in $2^{K}\Z^2$
and further discretise the subboxes by a lattice of width $\epsilon=2^{-n}$. 
Into each subbox we embed an independent standard Branching Random Walk of depth $K+n$ in the standard way respecting the dyadic subdivision (that is, each vertex of the tree at depth $K+\ell$ corresponds to a box of the form $[j_1 2^{-\ell},(j_1+1) 2^{-\ell})\times 
[j_2 2^{-\ell},(j_2+1) 2^{-\ell})$ for some $(j_1,j_2)$).
We call this infinite collection of Branching Random Walks
% embedded into boxes of side length $1/m$
the massive Branching Random Walk with (idealized) mass $m$ and denote its law by $\regmBRW{m}{\epsilon}$.
Note that  the construction creates independent blocks of side $m^{-1}=2^{K}$, which respects the notion of a hierarchical model, whereas a truly massive model would have an exponential decay of correlations at scale $2^K$. 

Let  $W=[0,2^{K})^2$ be the subbox with lower left corner $(0,0)\in\R^2$
and let $W_\epsilon=W \cap (\epsilon\Z^2)$ be its discretisation. 
Note that since $K>k=\log_2 L$, we have $\Lambda_L \subseteq W$.
Let $\TT$ be the infinite tree introduced above Corollary \ref{cor:decay-correlation-tree} with a semi-directed $\Ray$ emanating from the root $\bar o$,
see Figure \ref{fig:brw-rooted-at-infty},
and let $(\cR_k)_k$ be the sequence of nested subtrees (which in particular all contain $\bar o$).
We embed into $(\R_0^+)^2$ the tree $\TT$ so that $\bar o$ is identified with the square $[0,1)^2 \in \R^2$,
see Figure \ref{fig:embedded-brw-decomposition}.
Note that the subtree $\cR_{K}$ is then embedded into $W$
and that as mentioned above, the embedding gives an identification between points $x \in W_\epsilon$ and vertices $v\in \Gen{K+n}(\cR_{K})$. Note also that our construction of the massive Branching Random Walk is equivalent to setting all edge variables for edges $(v,w)$ with $\mathfrak{h}(v),\mathfrak{h}(w)<-K$ to equal $0$.
%Below we will use both notations interchangeably.

Let now $\hat S^m \sim \regmBRW{m}{\epsilon}$.
To obtain the hierarchical analogue of $\regmGFF{L}{m}{\epsilon}$ in \eqref{eq:discrete-massive-liouville-density},
we subtract analogously to \eqref{eq:massive-gff-without-avarage-on-L-box} the average of $\hat S^m$ on $\Lambda_{L,\epsilon}$ and define
\begin{equation}
\label{eq:embedded-BRW-avg-box}
\hat S^{L,m}= \hat S^m - \avg{\hat S^m}_{\Lambda_{L,\epsilon}}. 
\end{equation}
Note that $\hat S^{L,m}$ is a Gaussian random field on $\Lambda_{\infty,\epsilon}$.
As $\epsilon \to 0$ and then $m\to 0$,
this field, restricted to $\Lambda_L$ converges to a  Gaussian field on $\R^2$ in the usual sense of distributions.
We denote the law of $\hat S^{L,m}$ by $\regmLBRW{L}{m}{\epsilon}$.

%Since $\Lambda_L \subseteq W$, we may focus on the field $\hat S^m$ in $W$.
%%
%Recall that we embedded the tree $\TT$ into $(\R_0^+)^2$ so that $\cR_K$ is embedded into $W$.
%Hence, we  may view the Branching Random Walk $\hat S^m|_{W_\epsilon}$ as constructed with respect to $\cR_K$.
%This embedding allows us to identify points in $W_\epsilon$ and vertices in $\Gen{K+n}(\cR_K)$.
%Moreover, we say that a point $v\in \cR_K$ is an ancestor of $x\in W_\epsilon$,
%if $v$ is an ancestor in the usual sense of the vertex $w\in \Gen{K+n}(\cR_K)$ to which $x$ corresponds under the embedding.

By construction, all points $x\in \Lambda_{L,\epsilon} \subseteq W_\epsilon$ have a common ancestor $w_0 \in \Gen{K-k}(\cR_K)$, $w_0\in \Ray$.
Let $S^{w_0,L,m}$ be the Branching Random Walk on the subtree 
$\cR_{k}\subseteq \cR_K$
%of depth $k+n$
starting from $w_0$, so that we have for $x\in \Lambda_{L,\epsilon}$
\begin{equation}
\label{eq:embedded-BRW-decomposition}
%\hat S_x^{L,m} = \hat S_{w_0}^{L,m} +  S_x^{w_0, L,m}.
\hat S_x^{m} = \hat S_{w_0}^{m} +  S_x^{w_0, L,m}.
\end{equation}
Then we have
\begin{equation}
\label{eq:avg-embedded-BRW-on-box}
%\avg{\hat S^{L,m}}_{\Lambda_{L,\epsilon}} = \frac{1}{|\Lambda_{L,\epsilon}|} \sum_{x\in \Lambda_{L,\epsilon}} \hat S_x^{L,m}
%= \hat S_{w_0}^{L,m} + \frac{1}{|\Lambda_{L,\epsilon}|} \sum_{x\in \Lambda_{L,\epsilon}} S_x^{w_0, L,m}.
\avg{\hat S^{m}}_{\Lambda_{L,\epsilon}} = \frac{1}{|\Lambda_{L,\epsilon}|} \sum_{x\in \Lambda_{L,\epsilon}} \hat S_x^{m}
= \hat S_{w_0}^{m} + \frac{1}{|\Lambda_{L,\epsilon}|} \sum_{x\in \Lambda_{L,\epsilon}} S_x^{w_0, L,m}.
\end{equation}
In what follows we denote by $\embdbalBRW$ the restriction of $\hat S^{L,m}$ to $\Lambda_{L,\epsilon}$.
%We may view as a random variable with values in $X_{L,\epsilon}$. 
%Note that the embedding of $\cR_K$ into $W$ naturally gives an embedding of $\cR_k$ into $\Lambda_L$,
%and thus an identification between points $x\in \Lambda_L$ and vertices $v\in \Gen{k+n}(\cR_k)$. 
By \eqref{eq:embedded-BRW-avg-box}, and \eqref{eq:avg-embedded-BRW-on-box} we have for $x\in \Lambda_{L,\epsilon}$ (corresponding to vertices $v\in \Gen{k+n}(\cR_k)$)
\begin{equation}
\label{eq:bal-brw-for-hierarchical-liouville}
%\embdbalBRW_x = \hat S_{w_0}^{L,m} + \hat S_x^{w_0, L,m} - \hat S_{w_0}^m - \frac{1}{\Lambda_{L,\epsilon}} \sum_{x\in \Lambda_{L,\epsilon}} \hat S_x^{m,w_0}  
%= \hat S_{w_0}^m - \frac{1}{\Lambda_{L,\epsilon}} \sum_{x\in \Lambda_{L,\epsilon}} \hat S_x^{m,w_0} .
\embdbalBRW_x = \hat S_{w_0}^{m} + S_x^{w_0, L,m} - \hat S_{w_0}^m - \frac{1}{|\Lambda_{L,\epsilon}|} \sum_{x\in \Lambda_{L,\epsilon}}  S_x^{w_0,L,m}  
=  S_{x}^{w_0,L,m} - \frac{1}{\Lambda_{L,\epsilon}} \sum_{x\in \Lambda_{L,\epsilon}}  S_x^{w_0,L,m} .
\end{equation}
It follows  that for $m<2^{-L}$, the law $\regmLBRW{L}{m}{\epsilon}$ does not depend on $m$.

The expression on the right hand side of \eqref{eq:bal-brw-for-hierarchical-liouville} is closely related to a balanced Branching Random Walk of depth $k+n$ as defined in \eqref{eq:def-brw-balanced}.
To make this relation evident,
we consider a Branching Random Walk $(S_v)_{v\in \cR_k, \horo(v) \leq n}$ and construct a field $(\hat A_v^{k+n})_{v\in \cR_k,  \horo(v) \leq n}$ by replacing the value $S_v$ by its average over the descendants of $v$ in level $n$. 
More precisely, we define for each vertex $v\in \cR_{k}$ with $\horo(v) \leq n$
\begin{equation}
\hat A_v^{k+n} = d^{-(n-\horo(v) )} \sum_{w \in \Gen{n-\horo(v)}^v(\cR_k) } S_w .
\end{equation}
For an edge $e=(\anc{v}{1},v)$, $v\in \cR_k \setminus \{ \bar o_k\}$,
where we recall that $\anc{v}{1}$ denotes the ancestor of $v$ in the level above $v$,
we define
\begin{equation}
Y_e^{k+n}= \hat A_v^{k+n}- \hat A_{v^1}^{k+n}.
\end{equation}
Similarly to \eqref{eq:def-brw-balanced}, but now with $Y_e^{k+n}$ in place of $Y_e$, we obtain
\begin{equation}
\label{eq:def-bal-BRW-n}
\hat A_v^{k+n} =\sum_{e\in \bar o_k \leftrightarrow v}  Y_e^{k+n}.
\end{equation}
One can verify that $\hat A^{k+n}$ is a balanced Branching Random
Walk whose increments $(Y_e^{k+n})_{e\in \cR_k}$ have variances
\begin{equation}
\label{eq:variance-ynk}
\var(Y_e^{k+n}) = 1 - d^{-(n+k-i)}, \qquad e\in E(v) \text{ with } v \in \Gen{i}(\cR_k),
\end{equation}
and so that $(Y_e^{n+k})_{e\in E(v)}$ is an independent collection of centred Gaussian random variables of variance $(1 - d^{-(n+k-i)})/(1-1/d)$,
subject to
\begin{equation}
\label{eq:balanced-brw-n-sum-to-0}
\sum_{e \in {E(v)}} Y_e^{k+n} = 0.
\end{equation}
Moreover, for a fixed $k$ the law of $(\hat A_v^{k+n})_{v\in \cR_k \colon \horo(v) \leq 0}$ converges as $n\to\infty$ to the balanced Branching Random Walk $(\embdbalBRW_v)_{v\in \cR_k \colon \horo(v) \leq 0}$.
%Note that in contrast with \eqref{eq:bal-brw-for-hierarchical-liouville}, which is valid only for vertices  $v\in\cR_k$ with
%$\horo(v)=n$, the expression in 
%\eqref{eq:def-bal-BRW-n} is valid for any $v\in \cR_k$ with $\horo(v)\in [0,n]$.
One then obtains that for $x\in \Lambda_{L,\epsilon}$,
\begin{equation}
\label{eq:embd-balanced-brw-variance}
\var(\hat S_x^{L,m}) = \var( \embdbalBRW_x  )
= k + n - \frac{1 - d^{-(k+n)}}{d-1} 
= \log_2\frac{L}{\epsilon} + O(1)
= \frac{1}{\log 2} \log \frac{L}{\epsilon} + O(1),
\end{equation}
and hence, as $\epsilon \to 0$, the variances of the Gaussian Free Field in \eqref{eq:variance-L-gff} and the balanced Branching Random Walk in \eqref{eq:embd-balanced-brw-variance} agree up to a multiplicative constant.
%Note that the mass dependence completely disappeared due to the subtraction of the average.
Analogously to \eqref{eq:gff-gmc-approximation-full-mass} the
%the regularisation of the 
multiplicative chaos of $S^{L,m}$ in $\Lambda_{L,\epsilon}$ is then given by
\begin{equation}
\label{eq:brw-liouville-gmc-approx}
\regGMC{L}{\epsilon}{\gamma}(\phi) = \epsilon^2 \sum_{x\in \Lambda_{L, \epsilon}} e^{\gamma \phi_x -\frac{\gamma^2}{2\log 2}  \log \frac{1}{\epsilon} + O(1)} , \qquad \phi \sim \regmLBRW{L}{m}{\epsilon},
\end{equation}
where
%$ \gamma^2 \in (0,4 \log 2 )$, so that
$\gamma \in (0, \sqrt{2 \log 4})$.
As for the Gaussian Free Field, we renormalise at the fixed length scale $L_0=1$,
for which the length parameter $L$ does not appear in the exponent of  \eqref{eq:brw-liouville-gmc-approx}.
We see that $\regGMC{L}{\epsilon}{\gamma}(\phi)$ only depends on $\phi_x$ for $x\in \Lambda_{L,\epsilon}$,
and thus,
\begin{equation}
\label{eq:BRW-GMC-depending-on-box-only}
\regGMC{L}{\epsilon}{\gamma}(\hat S^{L,m}) = \regGMC{L}{\epsilon}{\gamma}(\embdbalBRW).
\end{equation}

We can now define analogously to \eqref{eq:discrete-gff-liouville-density-with-gmc} a measure $\regmhLv{L}{m}{\epsilon}$ by
\begin{equation}
\label{eq:discrete-brw-liouville-with-gmc}
\regmhLv{L}{m}{\epsilon}(d\phi) \propto e^{-\lambda \regGMC{L}{\epsilon}{\gamma}(\phi)} \regmbalBRW{L}{m}{\epsilon}(d \phi) ,
\end{equation}
which we refer to as the discretised hierarchical Liouville model.
Identifying $\gamma$ in \eqref{eq:discrete-brw-liouville-with-gmc} with $\sqrt{ \beta }$ in \eqref{eq:discrete-gff-liouville-density-with-gmc} the measure $\regmhLv{L}{m}{\epsilon}$ is a hierarchical version of $\regmLv{L}{m}{\epsilon}$,
in the sense that the underlying discrete Gaussian Free Field $\regmGFF{L}{m}{\epsilon}$ is replaced by the law of the balanced Branching Random Walk $\regmbalBRW{L}{m}{\epsilon}$.

\begin{figure}
\begin{center}
\def\svgwidth{0.7\columnwidth}
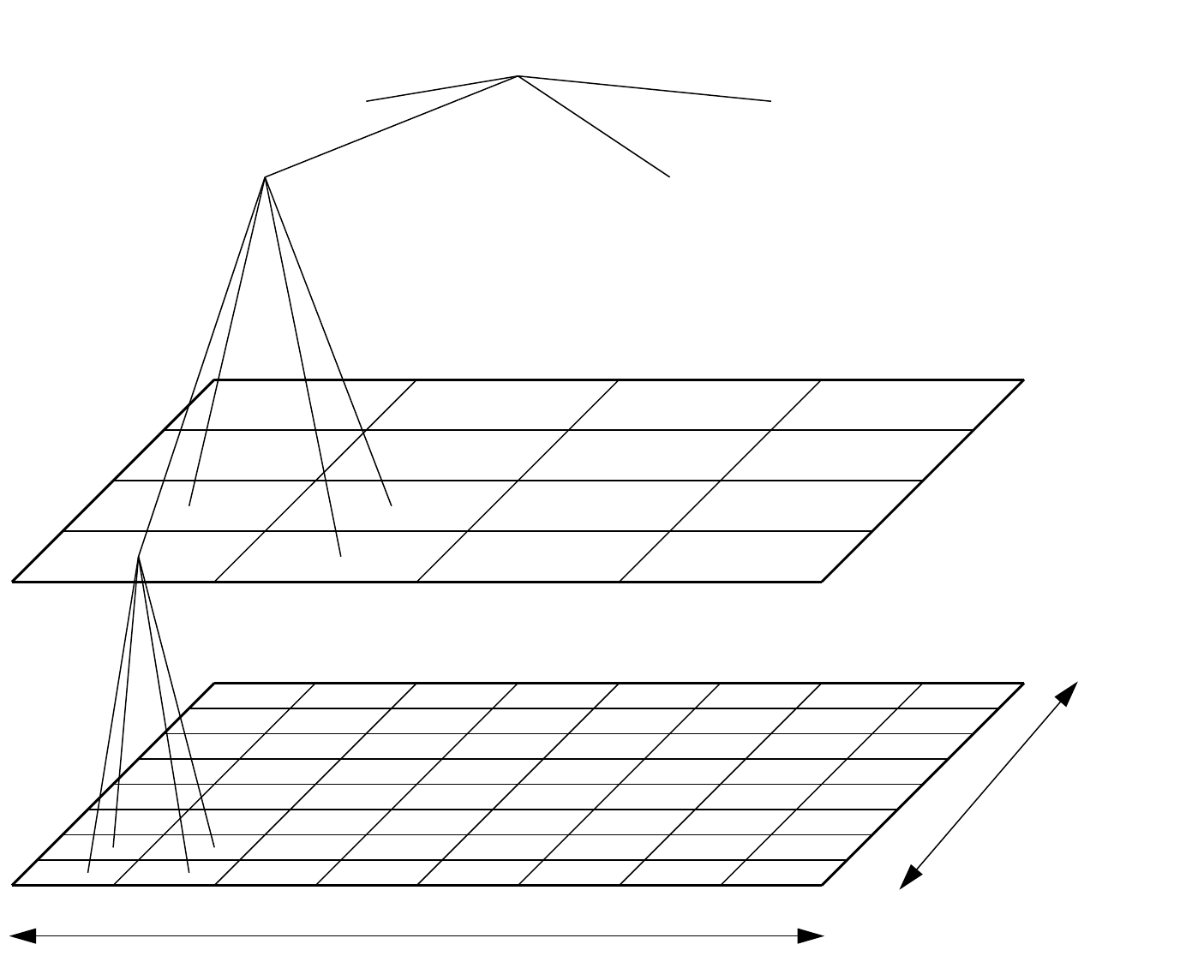
\end{center}
\caption{Branching Random Walk in $\Lambda_{L, \epsilon}$, depicted  for $\epsilon=1/2$ and $L=4$, i.e. $n=1$ and $k=2$. The subtree rooted at $\bar o$ is $\cR_0$, and that rooted at $\bar o_1$ is $\cR_1$.}
\label{fig:embedded-brw-decomposition}
\end{figure}

We now explain how the measure 
$\regmhLv{L}{m}{\epsilon}$ is related to $\PLv{t_k}$. 
For that it is useful write, for  $v\in\cR_k$ with $\horo(v)=n$ descendant of
$w$ with $\horo(w)=0$,
\begin{equation}
\label{eq:embd-balanced-brw-decomposition-bis}
\embdbalBRW_v = \embdbalBRW_{w} +  \embdbalBRW_v^{w}.
\end{equation}
This decomposition of the Branching Random Walk $\embdbalBRW$ is illustrated in Figure \ref{fig:embedded-brw-decomposition}.
With $\embdbalBRWregGMC{L}{\epsilon}{ } \equiv \regGMC{L}{\epsilon}{\gamma}(\embdbalBRW) $ as in \eqref{eq:BRW-GMC-depending-on-box-only} the decomposition \eqref{eq:embd-balanced-brw-decomposition-bis} gives
\begin{align}
\label{eq:approximated-gmc-volume-l-bis}
\embdbalBRWregGMC{L}{\epsilon}{ }
 &= \epsilon^2 \sum_{v \in \Gen{n+k}(\cR_k) } e^{\gamma \embdbalBRW_v - \frac{\gamma^2}{2} (n - \frac{1 - d^{-n}}{d-1}) } 
 = \epsilon^2 \sum_{w \in \Gen{k}(\cR_k) } \sum_{v \in \Gen{n}^w} e^{\gamma ( \embdbalBRW_w + \embdbalBRW_v^w  )  - \frac{\gamma^2}{2}(n - \frac{1 - d^{-n}}{d-1} ) } \nnb
 &=  \sum_{w \in \Gen{k}(\cR_k) } e^{\gamma \embdbalBRW_w }  \epsilon^2 \sum_{v \in \Gen{n}^w } e^{\gamma \embdbalBRW_v^w - (n - \frac{1 - d^{-n}}{d-1})  } 
= \sum_{w \in \Gen{k}(\cR_k) } e^{\gamma \embdbalBRW_w} \embdbalBRWregGMC{1}{\epsilon}{w},
\end{align}
where $(\embdbalBRWregGMC{1}{\epsilon}{w})_{w \in \Gen{k}(\cR_k)}$ is a collection of independent random variables with
\begin{equation}
\label{eq:convergence-MAnn-to-MA-bis}
\embdbalBRWregGMC{1}{\epsilon}{w} \to \MA
\end{equation}
in distribution as $\epsilon \to 0$.
%Since $\embdbalBRW$ defined in \eqref{eq:bal-brw-for-hierarchical-liouville} does not have the law of a balanced Branching Random Walk as defined in \eqref{eq:def-brw-balanced},
%the random variables $\embdbalBRWregGMC{1}{\epsilon}{z}$ do not have the same law as $M^{A,n}$ defined in \eqref{eq:gmc-balanced-brw-volume-1-approximation}.
%Nonetheless, one can still prove that the convergence \eqref{eq:convergence-MAnn-to-MA} holds.

We view $\embdbalBRWregGMC{L}{\epsilon}{}$ as a multiplicative chaos in the domain $[0,L)^2$.
However, unlike for the mul\-ti\-pli\-ca\-tive chaos defined in \eqref{eq:balanced-brw-gmc-recursion},
the sum in \eqref{eq:approximated-gmc-volume-l-bis} is not normalised by $L^2= 4^k$.
Moreover, there is no correction $-\frac{\gamma^2}{2} (k - d^{-n} \frac{1-d^{-k}}{d-1})$ to the values $(\embdbalBRW_w)_{w \in \Gen{k}(\cR_k)}$.
Completing these missing terms in \eqref{eq:approximated-gmc-volume-l-bis}, we have from the recursion \eqref{eq:balanced-brw-gmc-recursion} now applied to $\embdbalBRW$ that
\begin{equation}
\label{eq:gmc-l-vs-gmc-1-bis}
\embdbalBRWregGMC{L}{\epsilon}{ } = 4^k e^{\frac{\gamma^2}{2} (k - d^{-n} \frac{1-d^{-k}}{d-1} )} \big( \frac{1}{4^k} \sum_{w \in \Gen{k}(\cR_k)} e^{\gamma \embdbalBRW_w -\frac{\gamma^2}{2} (k - d^{-n} \frac{1-d^{-k}}{d-1}) }  \balBRWregGMC{1}{\epsilon}{z} \big) 
%\stackrel{d}{=} 4^k e^{\frac{\gamma^2}{2} (k - d^{-n} \frac{1-d^{-k}}{d-1})} \balBRWregGMC{1}{\epsilon}{}
 = 4^{\frac{\kappa + 1}{\kappa} k} e^{- d^{-n} \frac{1-d^{-k}}{d-1} }  \balBRWregGMC{1}{\epsilon}{}
\end{equation}
where we used \eqref{eq:exponents-first-order} and $e^{\frac{\gamma^2}{2} } = 4^{\frac{\gamma^2}{2\log 4}} =  4^{\frac{1}{\kappa}}$.

Let now $\embdPLv{t_k}$ be the probability measure on $\sigma (\embdbalBRW_v \colon v \in \cL_k)$ as introduced above Corollary \ref{cor:decay-correlation-tree}.
The embedding of $\embdbalBRW$ into subsets of $\mathbb{R}^2$
naturally gives an identification between $\cL_k$ (which here
is viewed as those vertices $v\in \cR_k$ with $\horo(v)=0$)
and integrals of the field over squares of side $1$.
Hence, we can view $\embdPLv{t_k}$ as a probability measure on such integrals.
Likewise we view $\regmhLv{L}{m}{\epsilon}$ as a probability measure on 
either
$(\embdbalBRW_v)_{v \in \cL_k}$ or on such integrals.
%$\sigma(\hat S_z^{L,m} \colon z\in \Lambda_{1,\infty})$.
We claim that as $\epsilon \to 0$ and then $m\to 0$, we have that $\regmhLv{L}{m}{\epsilon} \to \embdPLv{t_k}$ weakly.
%which gives the desired relation between $\regmhLv{L}{m}{\epsilon}$ and $\embdPLv{t_k}$.
To see this we first note that \eqref{eq:gmc-l-vs-gmc-1-bis} and the convergence \eqref{eq:convergence-MAnn-to-MA-bis} imply that as $\epsilon \to 0$, we have
\begin{equation}
\E_{\regmLBRW{L}{m}{\epsilon}} [ e^{- \lambda \embdbalBRWregGMC{L}{\epsilon}{ }(\phi) } ]
= \E [  e^{- \lambda \embdbalBRWregGMC{L}{\epsilon}{ }(\embdbalBRW) }  ]
= \E [ e^{-t_k \balBRWregGMC{1}{\epsilon}{}} ] \to \E[ e^{- t_k \MA} ] ,
\end{equation}
where the first equality assumes $m$ is small enough so that $m<2^{-L}$, see the text below \eqref{eq:embedded-BRW-avg-box}.

Let now $ B$ be an event depending on finitely many 
$(\embdbalBRW_{v_i})_{i=1,\ldots, N}$ with $\horo(v_i)=0 $, where we choose $k$ large enough so that $v_i \in \cL_k$ for all $i=1,\ldots,N$.
Then, we have for $K>k$ and as $\epsilon \to 0$
\begin{align}
\regmhLv{L}{m}{\epsilon} (B)
&= \frac{\E_{\regmLv{L}{m}{\epsilon}}[ \mathbf{1}_{B} e^{-\lambda \embdbalBRWregGMC{L}{\epsilon}{ }(\phi) } ]}{\E_{\regmLv{L}{m}{\epsilon}}[  e^{-\lambda \embdbalBRWregGMC{L}{\epsilon}{ }(\phi)} ] }
= \frac{\E[ \mathbf{1}_{B} e^{-\lambda \embdbalBRWregGMC{L}{\epsilon}{ }(\embdbalBRW) } ]}{\E[  e^{-\lambda \embdbalBRWregGMC{L}{\epsilon}{ }(\embdbalBRW)} ] }
=  \frac{\E[ \mathbf{1}_{B} e^{- t_k \balBRWregGMC{1}{\epsilon}{} }]}{\E[  e^{-t_k \balBRWregGMC{1}{\epsilon}{} }] } \nnb
&\to 
 \frac{\E[ \mathbf{1}_{B} e^{- t_k \MA} ]}{\E[  e^{-t_k \MA} ] } 
 = \embdPLv{t_k}(B),
\end{align}
from which the claimed convergence follows.

Having explained the relation of $\embdPLv{t_k}$ and the the measure $\regmLv{L}{m}{\epsilon}$  in \eqref{eq:discrete-gff-liouville-density-with-gmc}, we propose the following conjecture.
For a function $f\in C_c^\infty(\R^2)$ we write $f_x(\cdot)=f(\cdot-x)$ for the translation of $f$ by $x\in \R^2$.
\begin{conjecture}
\label{thm:liouville-decay-correlations-test-function}
Let $\regmLv{L}{m}{\epsilon}$ be defined as in \eqref{eq:discrete-gff-liouville-density-with-gmc} and let $\phi \sim \regmLv{L}{m}{\epsilon}$.
There exists a family of events $(\cA_{x,r,L})_{x\in \R^2, r\geq 0, L>0}$ with $\regmLv{L}{m}{\epsilon}(\cA_{x,r,L}) \to 1$ as first $\epsilon \to 0$,
then $m\to 0$ and finally $L\to \infty$,
such that for any $f,g \in C_c^\infty(\R^2)$,
\begin{equation}
\label{eq:decay-hLv-indicator}
\limsup_{r \to \infty} \limsup_{L \to \infty} \limsup_{m\to 0}
\limsup_{\epsilon \to 0} \sup_{y \in \R^2 \colon |x-y|=r}
|\langle \phi(f_x) \phi(g_y) \mathbf{1}_{ \cA_{x,r,L}} \rangle_{ \regmLv{L}{m}{\epsilon} } |  = 0.
\end{equation}
\end{conjecture}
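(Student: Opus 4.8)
The plan is to transfer, one after another, the three results about the balanced branching random walk --- Theorem~\ref{thm:liouville-mgf-asymptotics}, Theorem~\ref{thm:pt-liouville-decay-l1} and Corollary~\ref{cor:decay-correlation-tree} --- to the genuine Gaussian free field, using the dictionary of Section~\ref{sec:qft-models} together with a multiscale comparison between $\regmGFF{L}{m}{\epsilon}$ and $\regmbalBRW{L}{m}{\epsilon}$. Fix $L=2^k$ and the $4$-ary dyadic structure. Decompose the (zero-average) massive GFF as $\Phi^{L,m}=\sum_{j\geq 0}\psi_j$, with $\psi_j$ of range $O(2^{-j})$ and constant variance --- for instance through the finite-range decomposition of $(-\Delta^\epsilon+m^2)^{-1}$, or a $\star$-scale-invariant approximation --- so that $\psi_j$ is the GFF counterpart of the level increments $\embdbalBRW_v-\embdbalBRW_{\anc v 1}$. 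For $\rho=2^{a}$ let $\bar\Phi_\rho$ denote the field obtained by averaging $\Phi^{L,m}$ over the tiling of $\Lambda_L$ by squares of side $\rho$; this plays the role of $(A_v)_{v\in\Gen{k-a}}$, and $\cA_{x,r,L}$ will be a localisation near $x$ of the event $\{(\rho/L)^2\sum_Q|\langle\phi\rangle_Q|\le\epsilon'_r\}$, $\rho=r$, for a suitable sequence $\epsilon'_r\to0$.

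First step: the GFF analogue of Theorems~\ref{thm:liouville-mgf-asymptotics}--\ref{thm:pt-liouville-decay-l1}. Since the hierarchical model already embeds into the QFT framework ($\regmhLv{L}{m}{\epsilon}\to\embdPLv{t_k}$), it suffices to compare the Liouville tilt built from $\regmGFF{L}{m}{\epsilon}$ with the one built from $\regmbalBRW{L}{m}{\epsilon}$. Applying Kahane's convexity inequality to $\phi\mapsto \E[e^{-\lambda M(\phi)}\mid\text{remaining scales}]$ along the two decompositions --- whose covariances coincide up to the multiplicative constant and the $O(1)$ errors of \eqref{eq:variance-L-gff} and \eqref{eq:embd-balanced-brw-variance} --- yields two-sided bounds that give the order $t^\alpha$ for the GFF free energy $-\log\E[e^{-tM^{\GFF}}]$, with $\alpha$ as in \eqref{eq:exponents-first-order}. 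To reach the precise first-order constant one reruns the generation-by-generation large-deviation argument of Theorem~\ref{thm:liouville-mgf-asymptotics} directly on the dyadic coarse fields $(\bar\Phi_{2^j})_j$, the only new input being that these are asymptotically Gaussian with covariance \eqref{eq:variance-L-gff}; the convexity of $f_\lambda$ and the scaling relation $f_{\base\lambda}=d\,f_\lambda$, hence also the strict-convexity property \eqref{eq:strictly-convex-at-0}, then persist, and this is what yields the $L^1$ decay of $\bar\Phi_\rho$ under $\regmLv{L}{m}{\epsilon}$ as $\epsilon\to0$, $m\to0$, $L\to\infty$ and then $\rho\to\infty$.

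Second step: deduce \eqref{eq:decay-hLv-indicator} from this $L^1$ decay exactly as Corollary~\ref{cor:decay-correlation-tree} is deduced from Theorem~\ref{thm:pt-liouville-decay-l1}. Split $\phi=\bar\Phi_r+\tilde\phi_r$ into its scale-$\ge r$ and scale-$<r$ parts. For $f,g\in C_c^\infty(\R^2)$ with $|x-y|=r$, the fine parts $\tilde\phi_r(f_x)$ and $\tilde\phi_r(g_y)$ are independent under the Gaussian reference measure once $r$ exceeds their finite range, and each has mean $0$ --- the average-subtraction in \eqref{eq:massive-gff-without-avarage-on-L-box} affects only the coarsest scale and does not spoil this. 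The Liouville weight couples the two fine parts only through $\bar\Phi_r$: writing $\regGMC{L}{\epsilon}{\sqrt{\beta}}=\sum_Q e^{\sqrt{\beta}\langle\phi\rangle_Q}M^Q$ over side-$r$ cells $Q$, with $M^Q$ depending (up to a finite-range correction) only on $\tilde\phi_r|_Q$, the weight factorises over cells conditionally on $\bar\Phi_r$; hence $\tilde\phi_r(f_x)$ and $\tilde\phi_r(g_y)$ remain conditionally independent once their supports lie in distinct cells. Expanding
\begin{equation*}
\phi(f_x)\phi(g_y)=\bar\Phi_r(f_x)\bar\Phi_r(g_y)+\bar\Phi_r(f_x)\tilde\phi_r(g_y)+\tilde\phi_r(f_x)\bar\Phi_r(g_y)+\tilde\phi_r(f_x)\tilde\phi_r(g_y),
\end{equation*}
the cross terms and the fine--fine term are bounded, as in the proof of Corollary~\ref{cor:decay-correlation-tree}, by $\|\tilde\phi_r(f_x)\|_{L^2}$ together with a uniform bound on the bias the tilt induces on the fine field, while on $\cA_{x,r,L}$ the coarse--coarse term is at most $\|f\|_\infty\|g\|_\infty\epsilon'_r$ by Cauchy--Schwarz and the $L^1$ bound. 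Letting $\epsilon\to0$, $m\to0$, $L\to\infty$ and finally $r\to\infty$ gives \eqref{eq:decay-hLv-indicator}; as in the hierarchical case the restriction to the high-probability event $\cA_{x,r,L}$ is forced because only the $L^1$, not the $L^2$, norm of $\bar\Phi_r$ is known to be small.

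The main obstacle is the first step. Kahane comparison is cheap but accurate only to $O(1)$, whereas the inputs that actually drive Theorem~\ref{thm:pt-liouville-decay-l1} --- the precise constant $h(\lambda)$ and, above all, the strict-convexity statement \eqref{eq:strictly-convex-at-0} --- are second-order in nature and are destroyed by such errors. One must therefore run the scheme of Theorem~\ref{thm:liouville-mgf-asymptotics} with the hierarchical cells replaced by genuine dyadic boxes of the GFF, whose coarse averages are no longer exactly independent: their covariances decay with the separation but do not vanish, unlike the exact independence of sibling subtrees exploited here. Quantifying this residual dependence --- by a further finite-range truncation of the cell field plus an entropy or coupling estimate absorbing the error into the large-deviation rate --- is precisely where the present paper uses the hierarchical structure essentially, and is where the real work lies.
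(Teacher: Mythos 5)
The statement you are trying to prove is stated in the paper as a \emph{conjecture}, not a theorem: the authors explicitly write that they ``hope that our methods can be adapted'' to resolve it, and they supply no proof. Your proposal does not close this gap --- and, to your credit, you say so yourself in your final paragraph. What you have written is a programme, not a proof, and the decisive step is exactly the one you flag as ``where the real work lies.''

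Concretely, the gap is in your first step. The entire hierarchical argument for Theorem \ref{thm:pt-liouville-decay-l1} rests on two inputs that have no ready GFF counterpart: (i) the exact recursion \eqref{eq:balanced-brw-gmc-recursion}, which lets one condition on generation $k-a$ and factorise the tilt into a product of \emph{independent} terms $\E[e^{-\lambda e^{\gamma A_v}\MA_v}]$ --- this independence of sibling subtrees is used in Lemma \ref{lem:subadditivity-empirical-measure} to get submultiplicativity and hence the weak LDP, and again in the proof of Theorem \ref{thm:pt-liouville-decay-l1} to reduce \eqref{eq:pt-l1-large-numerator} to the deterministic bound \eqref{eq:lower-with-gain-bound-sum-mgfs}; and (ii) the resulting identification of the limit $h(\lambda)$ together with the strict convexity of $f_\lambda$ at a point, which is a second-order property. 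As you correctly observe, Kahane's convexity inequality compares the hierarchical and GFF covariances only up to $O(1)$ errors, which is enough for the crude exponent $t^\alpha$ of Lemma \ref{lem:liouville-mgf-asymptotics-bounds} but destroys precisely the second-order structure (the value of $h$ and the strict convexity \eqref{eq:strictly-convex-at-0}) that drives the $L^1$ decay. Replacing hierarchical cells by genuine dyadic boxes of the GFF introduces residual correlations between cells that your proposal handles only by gesture (``a further finite-range truncation \dots\ plus an entropy or coupling estimate''), without any construction or estimate. Your second step (deducing the correlation bound from the $L^1$ decay, mimicking the proof of Corollary \ref{cor:decay-correlation-tree}) is plausible modulo the first, but it also silently uses the exchangeability/zero-sum cancellation \eqref{eq:expectation-increment-zero}, which for the GFF would have to be replaced by a genuine argument controlling the tilt-induced bias of the fine field; conditional factorisation of the weight over cells is again only approximate for the GFF. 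In short: the statement remains open, and your write-up correctly identifies, but does not resolve, the obstruction.
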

We hope that our methods can be adapted to resolve Conjecture 
\ref{thm:liouville-decay-correlations-test-function}.
More ambitiously, one can hope that the sets $\cA_{x,r,L}$ in its statement can be removed.

\section{Literature}
\label{sec:literature}

The balanced Branching Random Walk was first introduced by Ding and Goswami in \cite{MR3742400} as switched sign Branching Random Walk to study the usual Branching Random Walk in the context of first passage percolation.
Subsequently, this model was used by Roy in \cite{MR4706674} to establish the first and second order asymptotics of the expectation of maximum of a Branching Random Walk under a hard wall constraint,
i.e.\ under the condition that the minimum of the Branching Random Walk is non-negative. 
%In both of these works the balanced Branching Random Walk is defined up to a finite level $n\in \N$,
%which then allows to relate it to the usual Branching Random Walk by \eqref{eq:brw-balanced-non-balanced-equality-law}. 
In contrast to these works, the balanced Branching Random Walk is the main object of interest in the present context.
%and we thus work from the beginning with the limiting model,
%which formally corresponds to the case $n=\infty$ in \cite{MR3742400} and \cite{MR4706674}.
We are not aware of other works that study the balanced Branching Random Walk as a stand-alone object,
and, to the best of our knowledge, our work is the first, which investigates the relation between the values of a Branching Random Walk model
and small values of its multiplicative chaos.

Our work is related to \cite{Fels2024BRW-1-2} by Fels, Hartung and Louidor, which establishes under the same conditioning as in \cite{MR4706674} sharp results on the limiting distribution of the conditional maximum,
thereby extending the results in \cite{MR4706674}.
We emphasise, however, that our point of view differs from these references,
in the sense that \cite{Fels2024BRW-1-2} and \cite{MR4706674} 
%directly condition all values in Generation $n$ to be positive,
%while the tilting with $e^{-t\MA}$ in our case, which is essentially equivalent to a conditioning $\MA \leq s$ with $s\to 0$ by Remark \ref{rem:conditional-liouville-decay-l1},
%only involves the values of the Branching Random Walk implicitly.
inject information on the values of the Branching Random Walk at level $n$ directly by conditioning the values to be positive at level $n$,
while we infer information on the values of the Branching Random Walk from the fact that the Gaussian multiplicative chaos is small.

The theory of multiplicative chaos, which underlies the definition of the measures $\PLv{t}$, goes back to the work \cite{MR829798} of Kahane,
where this object is constructed for certain log-correlated Gaussian fields and studied as a random measure on a finite domain in the Euclidean space.
For a survey on recent results on the multiplicative chaos, we refer to the review \cite{MR3274356} by Rhodes and Vargas.
In the present context we are only interested in the full mass of the multiplicative chaos, and all we need in our proofs are the recursion \eqref{eq:balanced-brw-gmc-recursion} and basic results for the negative moments of $\MA$.

The exponential moments of the multiplicative chaos, or equivalently the small deviation pro\-ba\-bi\-li\-ties $\P(\MA \leq s)$,
also appear in the construction of the quantum Mabuchi $K$-energy in \cite{MR4383250} by Lacoin, Rhodes and Vargas.
This work considers a Gaussian field on a bounded domain with zero average, which is corresponds to the balanced Branching Random Walk in our case, and thus shares some common ideas with the present work.
For instance, the proof \cite[Theorem 4.5]{MR4383250} is similar to the proof of the upper bound in \eqref{eq:crude-small-ball-asymptotics}.
%The log-correction in \cite[(4.7)]{MR4383250} is not optimal and can be removed using a modified argument involving the zero mean lemma. 
 
Our work is remotely related to \cite{MR4047550} by Remy on the Gaussian multiplicative chaos of the Gaussian Free Field on the unit circle $S^1$,
%i.e.\ the centred Gaussian field $X$ on $S^1$ with covariance given by
%\begin{equation}
%\E[X(e^{i\theta}) X(e^{i\theta'})] \sim \log \frac{1}{|e^{i\theta} - e^{i\theta'}|}, \qquad e^{i\theta}, e^{i\theta'} \in S^1,
%\end{equation}
in which an exact formula for the moments of the multiplicative chaos is proved.
This even allows to compute the density and the small deviations
probability of the multiplicative chaos explicitly as a fractional power of an exponential random variable,
which readily gives an exact formula for the small deviation probability.
However, this additional information, which is a key ingredient in the proof of Theorem \ref{thm:pt-liouville-decay-l1},
comes at the cost of giving up the recursive identity \eqref{eq:balanced-brw-gmc-recursion}.
Nonetheless, we hope that a variation of our method can yield
a decay of correlations similar to Corollary \ref{cor:decay-correlation-tree} in that case.
In particular, it would be interesting to see, if the formula for the density of the Gaussain multiplicative chaos can be used to establish finer properties,
such as quantitative estimates on the decay of correlation,
which remain completely open from our analysis.

As explained in Section \ref{sec:qft-models}, the motivation of our work ultimately comes from Euclidean field theory
with the measure $\PLv{t_k}$ in \eqref{eq:pt-liouville-definition} with $t_k=\lambda \base^k$ being a hierarchical version of the massless Liouville field theory in infinite volume.
We therefore continue the literature review with references from this area.
The Liouville model was introduced in more generality by H{\o}egh-Krohn in \cite{MR292433} and later studied by Albeverio and H{\o}egh-Krohn  in \cite{MR395578},
as a Euclidean field theory with exponential interaction.
It is thus often referred to as the H{\o}egh-Krohn model. 
These earlier works provide a non-probabilistic construction of the field in the $L^2$ regime for an interaction with space cut-off,
which in our notation corresponds to the box $\Lambda_L$ and $\beta < 4\pi$.
Moreover, it is argued that the space cut-off can be removed,
thereby obtaining a Euclidean theory in infinite volume.
The full $L^1$ regime $\beta < 8\pi$ was later studied by Kusuoka in \cite{MR1190536}. 
Notably, these non-probabilistic constructions were given before the work of Kahane.
Further results on Euclidean field theories with exponential interaction were established in \cite{MR0456220}, \cite{MR3116316} and \cite{MR692312}.

The first probabilistic constructions of a Liouville field theory were achieved using stochastic quantisation techniques by Hoshino, Kawabi and Kusuoka in \cite{MR4238209} \cite{MR4528973}, by Oh, Robert and Wang in \cite{MR4324379}  and by Albeverio, De Vecchi and Gubinelli in \cite{MR4415393}.
This approach allows to construct the Liouville field theory as the stationary measure of a certain SPDE known as the stochastic quantisation equation introduced by Parisi and Wu in \cite{ParisiWu1981PerturbationTheory}. 

A recent construction of the Liouville field theory in infinite volume was given by Barashkov and Gubinelli in \cite{MR4672110} using a variational formulation,
which allows to express the exponential moment of the interaction as a solution to a stochastic control problem.

One common feature of all probabilistic constructions of the Liouville field theory mentioned above is that a massive Gaussian Free Field $\ctmGFF{m}$ on $\R^2$ (or on a torus $\T^2$) with a fixed mass $m>0$ is used as a reference measure.
More precisely, with the same caveats as explained below \eqref{eq:liouville-qft-formal-definition} and translated to our notation,
the target measure $\ctmLv{m}$ in all previous references is given by
\begin{equation}
\label{eq:massive-liouville-theory}
\ctmLv{m}(d\phi) \propto \exp \Big[ - \lambda \int_{\R^2} e^{\sqrt{\beta} \phi (x)} dx \Big] d \ctmGFF{m}(\phi) .
\end{equation}
In particular, the reference measure $\ctmGFF{m}$ is a priori a well-defined Gaussian field on $\R^2$ thanks to the mass term $m$. 
By contrast, our methods are designed to take,
at least in the hierarchical case, the formal limit $m\to 0$,
which cannot be extracted from existing works.
The main difficulty is to deal with the fact that in the limit $m\to 0$ the reference measure becomes ill-defined,
which can be seen from the divergent term $\frac{1}{2\pi}\log \frac{1}{m}$ in \eqref{eq:variance-massive-gff}.
As we suggest in Section \ref{sec:qft-models},
the natural way to construct the massless theory is to start with a massive Gaussian Free Field,
subtract the average on a finite box $\Lambda_{L}$,
and then take the limit $m\to 0$. 
While strictly speaking our methods were crafted for the Branching Random Walk,
we believe an analogous approach works for the Gaussian Free Field.
In particular, it is of interest to establish the tightness of the measures $\ctLmLv{L}{m}$ defined in \eqref{eq:liouville-qft-formal-definition},
thereby giving a rigorous mathematical construction of a massless Liouville field theory in the plane.

There is also a substantial literature on the Liouville conformal field theory on the sphere.
A probabilistic construction of this model was given by Kupiainen, Rhodes and Vargas in \cite{MR3832670}; see also the subsequent works \cite{MR4131028} \cite{MR4060417}.
In this case the reference measure is the Gaussian Free Field on the sphere with zero average, which would correspond to the balanced Branching Random Walk in our work.
The main difference, however, is that the Liouville conformal field theory on the sphere is defined on a compact space, and therefore, the infinite volume limit is not of interest.

A true massless theory in infinite volume was recently constructed by Bauerschmidt and Webb in \cite{MR4767492} for the sine-Gordon field at the free fermion point $\beta = 4\pi$ by establishing a relation between the sine-Gordon field and the massive Thirring model on the level of correlation functions.
This allows then to take a formal limit $m\to 0$,
thereby giving a rigorous construction of the massless theory.
In fact, our formulation of the weak decay of correlations for the hierarchical Liouville model in Corollary \ref{cor:decay-correlation-tree} resembles \cite[Theorem 1.3]{MR4767492},
but we are not able to quantify the decay.

Finally, since we also touch on the negative exponential moments of $\frac{1}{2}(\Mplus + \Mminus)$ in Theorem \ref{thm:sinh-mgf-asymptotics},
we give references to the sinh-Gordon Euclidean field theory,
which is the measure defined by
\begin{equation}
\label{eq:massive-sinh-theory}
\nu_m^{\ShG} (d\phi) \propto \exp \Big[ - \lambda \int_{\R^2} \cosh( \sqrt{\beta} \phi (x) )dx \Big] d\nu_m^{\GFF}(\phi) .
\end{equation}
Thus, compared to \eqref{eq:massive-liouville-theory} the exponential function is replaced by a hyperbolic cosine,
which, after renormalisation by Wick ordering, leads to two non-independent Gaussian multiplicative chaoses $\Mplus$ and $\Mminus$,
where $\Mplus$ is constructed from $\phi$ and $\Mminus$ is constructed from $-\phi$.
In fact, the general Euclidean field theory with exponential interaction introduced in \cite{MR292433} also covers the sinh-Gordon model,
and thus, the non-probabilistic construction in this reference of the infinite volume limit also applies to the sinh-Gordon field theory.
A probabilistic construction as a measure of the form \eqref{eq:massive-sinh-theory} in infinite volume and for fixed mass $m>0$ was recently given by Barashkov and De Vecchi in \cite{BarashkovDeVecchi2021Elliptic} for the full $L^2$ regime of the multiplicative chaos,
i.e.\ for $\beta <4\pi$, using elliptic stochastic quantisation techniques.
%so that the measure is constructed as a stationary measure of a suitable stochastic partial different equation.
However, the construction of a massless theory remains an open problem.
In contrast to the Liouville field theory, it should be possible to take the formal limit $m\to 0$ without removing the average,
at least in the $L^2$ regime,
and given that the reference measure does not exist in the limit, this seems to be an interesting and challenging problem. 

Notable progress in this direction was recently reported by Guillarmou, Gunaratnam and Vargas in \cite{Guillarmou2024sinhGordon},
where they are able to give a construction of the massless sinh-Gordon field on the infinite cylinder.
Our analysis suggests that, in order to construct this model in the plane,
it is essential to have precise estimates on the asymptotics of the negative exponential moments up to the second order as $m\to 0$.
In the case of the Branching Random Walk, this corresponds to the asymptotics of $\tilde \Lambda^u$ defined in \eqref{eq:lower-bound-sinh-imbalance}; see also the discussion below \eqref{eq:pt-sinh-definition}.
A first result in this direction was achieved by Barashkov, Oikarinen and Wong in \cite{Barashkov2024SmallDeviations}, where they prove
\begin{equation}
\label{eq:bounds-sinh-gordon-free-energy}
c L^{2} \leq -\log \E_{\bar \nu_{m}^\GFF } [e^{-\lambda \int_{\Lambda_L}\cosh(\sqrt{\beta}\phi(x))dx }] \leq C L^{2}
\end{equation}
for some constants $0< c\leq C$, which are independent of $m>0$.
Here, $\bar \nu_{m}^\GFF$ denotes the Gaussian Free Field with removed zero mode.
This result is the analogue to our Lemma \ref{lem:sinh-mgf-asymptotics-bounds},
though our result is proved for the simplified setting,
in which Branching Random Walk takes the role of the Gaussian Free Field.
Nonetheless, we believe that some of our ideas apply also apply to the more general case, 
and it would be interesting to see, if \eqref{eq:bounds-sinh-gordon-free-energy} can be established in an asymptotic sense  with $c=C(1+o_L(1))$.

\section{Asymptotics of the negative exponential moments}
\label{sec:large-deviations}

In this section we present the proofs of Theorem \ref{thm:liouville-mgf-asymptotics} and Theorem \ref{thm:sinh-mgf-asymptotics}.
We are mainly concerned with the negative exponential moments of $\MA$.
The case $\frac{1}{2}(\Mplus + \Mplus)$ is similar and is presented in Section \ref{ssec:sinh-mgf-asymptotics}.

\subsection{Exponential moments of $\MA$: proof of Theorem \ref{thm:liouville-mgf-asymptotics}}
\label{ssec:liouville-mgf-asymptotics}

We first establish the following a priori estimates for the exponential moments of $\MA$.
\begin{lemma}
\label{lem:liouville-mgf-asymptotics-bounds}
Let $\alpha$ be as in \eqref{eq:exponents-first-order}. There exist constants $0< c \leq C$ and $t_0\geq 0$, such that for $t \geq t_0$
\begin{equation}
\label{eq:liouville-mgf-asymptotics-bounds}
e^{-C  t^\alpha} \leq \E[e^{-t \MA}] \leq e^{-c t^\alpha}.
\end{equation}
\end{lemma}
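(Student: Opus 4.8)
The plan is to exploit the recursion \eqref{eq:balanced-brw-gmc-recursion}, which writes $\MA \stackrel{d}{=} d^{-1}\sum_{v\in \Gen{1}} e^{\gamma A_v - \gamma^2/2}\MA_v$ with $(\MA_v)$ i.i.d.\ copies independent of the first generation $(A_v)_{v\in\Gen1}$. Setting $G(t) = \E[e^{-t\MA}]$, this gives a functional inequality: conditioning on $(A_v)_{v\in\Gen1}$,
\begin{equation}
\label{eq:plan-recursion}
G(t) = \E\Big[\prod_{v\in\Gen1} G\big(t d^{-1} e^{\gamma A_v - \gamma^2/2}\big)\Big].
\end{equation}
I would first prove the \emph{upper} bound $G(t)\le e^{-ct^\alpha}$. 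The heuristic is that $-\log G$ should be superadditive-like under the scaling $t\mapsto \base t = d e^{\gamma^2/2} t$: if all $A_v$ were equal to $0$, one would get $G(\base t)\approx G(t)^d$, i.e.\ $-\log G(\base t)\gtrsim d\,(-\log G(t))$, and iterating $k$ times from a fixed $t_0$ with $\base^k t_0 \asymp t$ yields $-\log G(t)\gtrsim d^k = (t/t_0)^{\log d/\log\base} = (t/t_0)^\kappa\cdot$, wait — one must be careful: $\log d/\log \base = \log d/(\gamma^2/(2\log d))$; since $\base = e^{1/\kappa}$, $\log\base = 1/\kappa$ and $\log d = \gamma^2\kappa/2\cdot$, hmm, rather $\log d = (\gamma^2/2)\kappa$ so $d = e^{\gamma^2\kappa/2}$ and $\base^\kappa = e = $ no. Let me just record the correct bookkeeping: with $\base^k t_0$ of order $t$ we have $d^k = \base^{k\kappa} \asymp t^{\kappa/(\kappa+1)\cdot(\kappa+1)/\kappa}$; since $\base = d^{1+1/\kappa}$ one gets $d^k \asymp t^{\kappa/(\kappa+1)} = t^\alpha$, which is the target exponent $\alpha$. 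The rigorous version of this requires handling the randomness of the $A_v$: on the event that $\gamma A_v - \gamma^2/2 \ge -\log d + \delta$ for all $v\in\Gen1$ (an event of probability bounded below uniformly, since $\Gen1$ is finite), \eqref{eq:plan-recursion} gives $G(t)\le \P(\text{that event})\cdot 1 + \ldots$, so I would instead lower-bound using monotonicity: restricting the expectation in \eqref{eq:plan-recursion} to that event and using $G$ decreasing, $G(t) \le 1 - p(1 - G(e^{\delta}t)^d)$ for a fixed $p>0$, then bootstrap. Actually the cleaner route for the upper bound is the one already flagged in the paper's Remark-style discussion: relate $G(t)$ to the small-ball probability $\P(\MA\le s)$ via $\P(\MA\le 1/t)\le e\,G(t)$ and $G(t)\le \P(\MA \le s) + e^{-ts}$, and prove the small-ball bound $\P(\MA\le s)\le e^{-c s^{-\kappa/(\kappa+1)\cdot(\ldots)}}\cdot$ — matching exponents since $t^\alpha \leftrightarrow s^{-\alpha}$ with $s\asymp t^{-1}$. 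The small-ball upper bound itself follows by iterating the recursion: $\MA \le s$ forces, roughly, that a positive-density fraction of the $\MA_v$ at a well-chosen generation $n\asymp \log(1/s)$ are themselves $\le$ (something), giving an exponential-in-$d^n$ cost.

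For the \emph{lower} bound $G(t)\ge e^{-Ct^\alpha}$, I would restrict the expectation in \eqref{eq:plan-recursion} — or rather iterate it $n$ times — to the event $E_n$ that the balanced walk stays in a moderate window, say $|A_v|\le C'$ for all $v\in\Gen{n}$ (simultaneously over a number $d^n$ of vertices). Choosing $n$ so that $\base^n \asymp t$, i.e.\ $n \asymp \log t$, on $E_n$ the conditional value $t d^{-n} e^{\gamma A_v - \gamma^2 n/2}\cdot$ — using the variance correction in \eqref{eq:variance-bal-BRW} — is of order $1$, so each factor $\E[e^{-(\text{order }1)\MA_v}]$ is bounded below by a constant $q>0$, giving $G(t)\ge \P(E_n)\, q^{d^n}$. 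The probability $\P(E_n)$ that $d^n$ correlated Gaussians all stay bounded can be bounded below by $e^{-C'' d^n}$ (again iterating generation by generation, each step costing a constant factor per vertex, using the independence structure \eqref{eq:balanced-brw-sum-to-0} across sibling-groups), so altogether $G(t)\ge e^{-C d^n} = e^{-C't^\alpha}$, as desired.

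The \textbf{main obstacle} is making the iteration of \eqref{eq:plan-recursion} quantitatively tight enough that the exponent comes out to be exactly $\alpha = \kappa/(\kappa+1)$ rather than something weaker, and controlling the interplay between (i) the number of generations $n\asymp\log t$ one descends, (ii) the fluctuations of $A_v$ over the $d^n$ vertices of $\Gen n$, and (iii) the per-vertex multiplicative cost. For the upper bound one must rule out the possibility that $\MA$ is made small "cheaply" by a rare large negative excursion of $A$ near the root rather than by the genuinely costly mechanism of many descendant chaoses being small; the balancedness (the constraint $\sum_{e\in E(v)}Y_e=0$, hence the extra $-\mathbf 1_{v\neq w}$ in \eqref{eq:variance-bal-BRW}) is what prevents this and must be used. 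I expect the upper bound to go through the small-ball route, where one picks the generation $n = \lceil \kappa^{-1}\log(1/s)\rceil$ at which $d^{-n}e^{\gamma A_v - \gamma^2 n/2}$ is typically order $1$, notes that $\MA\le s$ with $s\asymp e^{-n/\kappa}$ forces $\sum_{v\in\Gen n} e^{\gamma A_v - \gamma^2 n/2}\MA_v \le s d^n$, hence (by a first-moment/Markov + large-deviations argument over the finitely-supported sum) that a constant fraction of terms are atypically small, incurring cost $e^{-c d^n}\asymp e^{-c' s^{-\kappa/(\kappa+1)}}$ and translating back to $e^{-c''t^\alpha}$.
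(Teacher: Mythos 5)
Your overall strategy coincides with the paper's: both bounds are obtained by passing to the recursion \eqref{eq:balanced-brw-gmc-recursion} at a generation $n$ with $d^n\asymp t^{\alpha}$ (equivalently, via the small-ball probability $\P(\MA\le s)$ with $s\asymp 1/t$ and $d^n\asymp s^{-\kappa}$), and your exponent bookkeeping is correct. Your lower bound is essentially the paper's proof: the paper also restricts to the event that every increment $Y_e$ up to generation $\ell$ is small (so that $\lpnorm{A}{\ell}{\infty}\le u$), shows this event has probability at least $e^{-C_u d^{\ell}}$ by factorizing over sibling groups, and then controls $d^{-\ell}\sum_v \MA_v$ (the paper uses the law of large numbers where you propose a per-vertex factor $q^{d^n}$; both work). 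No issue there.

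The gap is in the upper bound, at exactly the step you leave vague. From $\MA\le s$, i.e.\ $d^{-n}\sum_{v\in\Gen{n}}e^{\gamma A_v}\MA_v\le r$, it does \emph{not} directly follow that "a positive-density fraction of the $\MA_v$ are small": the weights $e^{\gamma A_v}$ are typically of size $e^{\pm\gamma\sqrt{n}}$ and are not bounded below, so the inequality could a priori be satisfied with all $\MA_v$ of order one, provided the vertices carrying most of the weight happen to have small $\MA_v$ --- and a first-moment/Markov bound on the weighted sum only yields a cost polynomial in $r$, not exponential in $d^n$. You correctly identify that balancedness must be invoked here, but you do not supply the mechanism. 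The paper's mechanism is an AM--GM/Jensen step: $d^{-n}\sum_v e^{\gamma A_v}\MA_v\ge \exp\big(d^{-n}\sum_v(\gamma A_v+\log\MA_v)\big)=\exp\big(d^{-n}\sum_v\log\MA_v\big)$, where the identity $\sum_{v\in\Gen{n}}A_v=0$ makes the weights disappear \emph{exactly}. This reduces the problem to a lower-tail bound for the i.i.d.\ sum $\sum_v\log\MA_v$, which is then handled by a Chernoff bound using the finiteness of negative moments $\E[(\MA)^{-\theta}]<\infty$ --- a nontrivial external input (cited from the literature on fixed points of smoothing transforms) that your sketch never mentions but that is needed to make "each small $\MA_v$ costs a constant" quantitative at the Chernoff level. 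Without the Jensen step and the negative-moment input, the upper bound does not close as written.
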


It is worthwhile commenting on the appearance of the exponent $\alpha$ in \eqref{eq:liouville-mgf-asymptotics-bounds}: As we explain below, the computation of the Laplace transform of $M^A$ is based on small ball estimates for $M^A$, see
\eqref{eq:crude-small-ball-asymptotics}. As our proof of the lower bound in the latter shows, a good strategy for computing the small ball
probability 
is to force the variables in the first $\ell$ generations of the tree to be all small. The probability cost of this effect is exponential in the
volume of this subtree,  while the effect on $M^A$ is proportional to $e^{-\gamma^2 \ell/2}$. Optimizing over $\ell$ as function 
of $s$ in \eqref{eq:crude-small-ball-asymptotics} then yields the constant $\kappa$ in the latter, which then translates, using Laplace's method,  to $\alpha$
in \eqref{eq:liouville-mgf-asymptotics-bounds}. The complementary upper bound in \eqref{eq:crude-small-ball-asymptotics}, on the other hand, is an easy application of the hierarchical structure of the Branching Random Walk, Jensen's inequality, and a crude a priori estimate on negative exponential moments of $M^A$.

\begin{proof}
We show that for some constants $0<c'\leq C'$ and as $s \to 0$,
\begin{equation}
\label{eq:crude-small-ball-asymptotics}
e^{-C'\frac{1}{s^\kappa}} \leq \P(\MA \leq s) \leq e^{-c' \frac{1}{s^\kappa}},
\end{equation}
where $\kappa$ is as in \eqref{eq:exponents-first-order}. From this result the asymptotics in \eqref{eq:liouville-mgf-asymptotics-bounds}
follow by writing
\begin{equation}
\label{eq:liouville-mgf-by-laplace-method}
\E[e^{-tM^A}]=\int_0^\infty t e^{-ts} \P(M^A\leq s) ds\leq  O(t e^{-ct})+t \int_0^c e^{-st} e^{-c's^{-\kappa}}ds,
\end{equation}
and noting that by Laplace's method the main contribution to the integral is given by a neighborhood of $s=\mathfrak{c}t^{-1/(\kappa+1)}$ 
with $\mathfrak{c}=(\kappa c')^{1/(\kappa+1)}$, in which case the right hand side of \eqref{eq:liouville-mgf-by-laplace-method} is bounded above by 
$e^{-(\mathfrak{c}+c'/\mathfrak{c}^\kappa) t^\alpha(1+o_t(1))}$. The argument for the lower bound is similar.

To obtain the upper bound in \eqref{eq:crude-small-ball-asymptotics},
we let $\ell \in \N$ be such that $s= r e^{-\frac{\gamma^2}{2}\ell}$ for some $r<1$, which is determined below.
Then we observe that by \eqref{eq:balanced-brw-gmc-recursion},  Jensen's inequality and the fact that $\sum_{v\in \Gen{\ell}} A_v = 0$, we have
\begin{equation}
\MA \stackrel{d}{=} \frac{e^{-\frac{\gamma^2}{2}\ell}}{d^\ell} \sum_{v\in \Gen{\ell}} e^{\gamma A_v} \MA_v
\geq  e^{-\frac{\gamma^2}{2}\ell} e^{\frac{1}{d^\ell}\sum_{v\in \Gen{\ell}}   \log \MA_v }.
\end{equation}
Then we have by Markov's inequality for any $\theta>0$
\begin{align}
\label{eq:upper-bound-small-ball-by-markov}
\P(\MA \leq s) &\leq  \P( \frac{1}{d^\ell} \sum_{v\in \Gen{\ell}} \log \MA_v \leq \log r)
= \P \big( \sum_{v\in \Gen{\ell}} \log \MA_v \leq d^\ell \log r \big)  \nnb
&= \P\big( \prod_{ v\in \Gen{\ell} } (\MA_v)^{-\theta} \geq r^{-\theta d^\ell } \big)
\leq \E  \big[ (\MA)^{-\theta} \big]^{d^\ell} r^{\theta d^\ell} .
\end{align}
By 
\cite[Theoreme IIA]{MR1717530} or \cite[Theorem 4a]{MR1400758}
we have that $e^{c_\theta} \equiv \E[( \MA)^{-\theta} ] < \infty$ for some constant $c_\theta>0$ depending on $\theta$.
Therefore, it follows from the previous display that
\begin{equation}
\P(\MA \leq s) \leq e^{  c_\theta d^\ell  + d^\ell \theta \log r } = e^{  -( \theta \log \frac{1}{r} - c_\theta ) d^\ell}.
\end{equation}
Now, for a fixed $\theta>0$ and for $r$ sufficiently close to $0$, the prefactor in the exponent is negative.
Noting that $d^{\ell} = ({r}/{s})^{\kappa}$, the upper bound in \eqref{eq:crude-small-ball-asymptotics} follows.

To prove the lower bound in \eqref{eq:crude-small-ball-asymptotics},
we use the following explicit construction of $(Y_e)_{e\in \cT}$ in \eqref{eq:def-brw-balanced}.
Let $(Z_e)_{e\in \cT}$ be a collection of independent centred Gaussian random variables with variance $d/(d-1)$ and set
\begin{equation}
\label{eq:explicit-construction-Y}
Y_e = Z_e - \frac{1}{d} \sum_{e'\in E(v)} Z_{e'}.
\end{equation}
Then it is easy to check that $\E[ Y_e^2] = 1$ and $\E[Y_e Y_{e'}] = -1/(d-1)$ as needed.
For $u>0$ we define the events
\begin{align}
\label{eq:good-event-lower-bound-small-ball}
\cA_{\ell,u} &= \big\{ \forall i=0,\ldots, \ell-1 \colon |Y_e| \leq \frac{u}{2^{\ell-i+1}} \text{ for all } e \in \Gen{i} \big\}, \nnb
\tilde \cA_{\ell, u} &= \big\{ \forall i=0,\ldots, \ell-1 \colon |Z_e| \leq \frac{u}{2^{\ell-i+1}} \text{ for all } e \in \Gen{i} \big\},
\end{align}
where $e\in \Gen{i}$ means that $e\in E(v)$ for some $v\in \Gen{i}$.
Moreover, we define
\begin{equation}
\label{eq:event-infty-norm-bounded}
\cB_{\ell, u} = \{  \lpnorm{A}{\ell}{\infty} \leq u \}.
\end{equation}
Note that on the event $\cA_{\ell, u}$, we have for any $v\in \Gen{\ell}$
\begin{equation}
|A_v| \leq \sum_{e \in o\leftrightarrow v} |Y_e| \leq u \sum_{i=0}^{\ell -1} 2^{-(\ell -i+1)} = u \sum_{j=0}^{\ell -1} 2^{-j} = u(1-2^\ell ) \leq u,
\end{equation}
which shows that $\cA_{\ell, u} \subseteq \cB_{\ell, u}$.
Moreover, we have by \eqref{eq:explicit-construction-Y} and the triangle inequality $\tilde \cA_{\ell,u/2} \subseteq \cA_{\ell,u}$.
Thus, in order to find a lower bound $\P(\cB_{\ell, u})$,
it suffices to establish a lower bound on $\P(\tilde \cA_{\ell,u/2})$.

Note that for $i\in \{0,\ldots,\ell-1\}$ there are $d^i$ edges between level $i$ and level $i+1$. 
Further note that if $Z$ is a Gaussian random variable with variance $d/(d-1)$, then there exists $c_d>0$, such that for $r\leq 1$
\begin{equation}
\label{eq:gaussian-small-ball}
\P(|Z|  \leq r ) \geq c_d r.
\end{equation}
Using this fact together with the independence of the collection $(Z_e)_{e\in \cT_\ell}$,
we have
\begin{align}
\label{eq:lower-bound-event-infty-norm}
\P(\tilde \cA_{\ell, u}) &=\prod_{i=0}^{\ell-1} \prod_{e\in \Gen{i}} \P\Big(|Z_e| \leq \frac{u}{2^{\ell-i+1}} \Big) 
= \prod_{i=0}^{\ell-1} \P\Big(|Z_e| \leq \frac{u}{2^{\ell-i}} \Big)^{ d^i }  \nnb
& \geq \prod_{i=0}^{\ell-1} \Big( c_d \frac{u}{2^{\ell-i+1}} \Big)^{d^i}
= (uc_d)^{\sum_{i=0}^{\ell -1} d^i  }  2^{-\sum_{i=0}^{\ell -1}(\ell -i+1)d^i} 
%=(c')^{2^{\ell-1} } \prod_{i=0}^{\ell-1} \frac{1}{2^{(\ell-i+1) 2^{i-1}}}  
%= (c')^{2^{\ell-1} } \frac{1}{2^{ \sum_{i=1}^\ell (\ell-i) 2^{i-1}}} 
%\geq 
%\frac{(c')^{2^{\ell-1} }}{2^{ C' 2^\ell} }
%\geq (uc_d)^{\frac{d^\ell -1}{d-1}}  2^{-C_d d^\ell}  
\geq {e^{-C_u d^\ell} } ,
\end{align}
where $C_u>0$ is a constant depending on $u$ and $d$ with $C_u \to \infty$ as $u\to 0$.
Note that in the last line of \eqref{eq:lower-bound-event-infty-norm} we used that $\sum_{i=0}^{\ell-1} (\ell-i+1)d^i \leq C_d d^\ell$ for some constant $C_d$,
which is independent of $\ell\in\N$.
We now set $s= Re^{-\frac{\gamma^2}{2} \ell}$ for some $R>1$ to be determined below (after \eqref{eq:lower-bound-small-ball-good-event}).
Note that on the event $\cB_{\ell,u}$ we have
\begin{equation}
\MA \stackrel{d}{=} \frac{1}{d^\ell}  \sum_{v\in \Gen{\ell}} e^{\gamma A_v - e^{-\frac{\gamma^2}{2} \ell} } \MA_v
\leq  e^{\gamma u -\frac{\gamma^2}{2} \ell} \frac{1}{d^\ell} \sum_{v\in \Gen{\ell}} M_v ,
\end{equation}
and thus, we have by the independence of $(\MA_v)_{v\in \Gen{\ell}}$ and $\cB_{\ell, u}$
\begin{align}
\label{eq:lower-bound-small-ball-good-event}
\P(\MA\leq s)&\geq \P(\MA \leq s, \, \cB_{\ell, u})
\geq \P \big( \frac{1}{d^\ell} e^{\gamma u} \sum_{v\in \Gen{\ell}} M_v \leq R, \, \cB_{\ell,u} \big) \nnb
&= \P\big( \frac{1}{d^\ell} e^{\gamma u} \sum_{v\in \Gen{\ell}} \MA_v \leq R \big ) \P(\cB_{\ell,u} ) .
%\geq \frac{1}{2} \P(\cA_{\ell,u} )
\end{align}
We now choose $R$ and $u$ such that $e^{-\gamma u} R >1$.
Then, by the law of large numbers and the fact that $\E[\MA] =1$ we have
\begin{equation}
\P\big( \frac{1}{d^\ell} e^{\gamma u} \sum_{v\in \Gen{\ell}} \MA_v \leq R \big ) \to 1
\end{equation}
as $\ell \to \infty$.
In particular, there is $\ell_0\in \N$, such that for this choice of $R$ and $u$, we have
\begin{equation}
\label{eq:MLv-lower-bound-by-LLN}
\P\big( \frac{1}{d^\ell} e^{\gamma u} \sum_{v\in \Gen{\ell}} \MA_v \leq R \big ) \geq \frac{1}{2} , \qquad \ell \geq \ell_0.
\end{equation}
Since $d^{\ell}= ( {R}/{s})^{\kappa}$, the lower bound in \eqref{eq:crude-small-ball-asymptotics} follows by \eqref{eq:lower-bound-event-infty-norm}.
\end{proof}

We now turn to the proof of Theorem \ref{thm:liouville-mgf-asymptotics},
for which we rewrite the negative exponential moment of $\MA$, so that arguments from large deviations theory apply.
%To this end, we recall that $t_k =  \lambda \base^k$, $k\in \N$ and $ \lambda>0$,
Specifically, we use an approximate sub-additive structure of large deviations probabilities, to prove a priori a weak large deviations principle (Proposition \ref{prop:weak-ldp}) for averages of functions of $A_v$ with $v\in D_k$ and $k$ large, see \eqref{eq:definition-ld-rv} and the decomposition \eqref{eq:mgf-as-ld}; a technical point that needs  then to be addressed is that we only get 
this way a weak large deviations principle (for good reasons - the rate function vanishes at $+\infty$), and we use the a priori moment bounds of Lemma \ref{lem:liouville-mgf-asymptotics-bounds} in order
to restrict attention to compact subsets of $\R_+$, for which the weak large deviations principle suffices.

Proceeding with the proof, we recall that $t_k = \lambda \base^k$, $k\in \N$ and $\lambda>0$,
and expand $\MA$ at level $k$ according to \eqref{eq:balanced-brw-gmc-recursion} to obtain
\begin{equation}
\E[e^{-t_k \MA} ]=\E[e^{-\lambda \base^k\MA}]
%= \E \Big[\exp \big( {-t e^{-\frac{\gamma^2}{2}\ldlv} \frac{1}{2^\ldlv} \sum_{v\in \Gen{\ldlv} } e^{\gamma A_v}  \MA_v} \big) \Big]
= \E\big[ e^{-\lambda \sum_{v\in \Gen{\ldlv}} e^{\gamma A_v} \MA_v } \big],
\end{equation}
where $(\MA_v)_{v\in \Gen{\ldlv}}$ is a collection of independent identically distributed random variables with $\MA_v \stackrel{d}{=} \MA$.
Conditioning on $(A_v)_{v\in \Gen{\ldlv} }$ and using the independence of $(\MA_v)_{v\in \Gen{\ldlv}}$
we can write the last display as
\begin{equation}
\label{eq:mgf-as-ld}
\E[e^{-\lambda \sum_{v\in \Gen{\ldlv}} e^{\gamma A_v} \MA_v } ]\! =\!
\E \Big[ \prod_{v\in \Gen{\ldlv} } \E\big[ e^{-\lambda e^{\gamma A_v} \MA_v} \bigm | (A_v)_v  \big] \Big] 
\!= \!\E\Big[ e^{ \sum_{v\in  \Gen{\ldlv} } \log \E \big[ e^{-\lambda e^{\gamma A_v} \MA_v} \bigm | (A_v)_v \big ]   }  \Big] 
%\nnb
%&= \E \Big[ e^{-2^\ldlv \frac{1}{2^\ldlv} \sum_{v\in \Gen{\ldlv}} - \log \E\big[e^{-\lambda e^{\gamma x} \MA} \big]_{x = A_v} } \Big] =
\!=\!\E [e^{-d^\ldlv H_\ldlv } ] ,
\end{equation}
where the random variables $H_\ldlv$ are defined by
\begin{equation}
\label{eq:definition-ld-rv}
H_\ldlv 
%=\frac{1}{2^\ldlv} \sum_{v\in \Gen{\ldlv}}\log \E[e^{-\lambda e^{\gamma A_v} \MA } ]
= \frac{1}{d^\ldlv} \sum_{v\in \Gen{\ldlv}} G(A_v)
\end{equation}
with $G\colon \R\to \R$, 
\begin{equation}
\label{eq:definition-G}
G(x) = -\log \E[e^{-\lambda e^{\gamma x } \MA } ].
\end{equation}
Note that both $G$ and $H_k$ depend on $\lambda>0$,
but since this parameter is fixed throughout this section,
we omit it in the notation.

In what follows, we prove that the laws of $H_\ldlv$ satisfy a (weak) large deviations principle.
The first result in this direction is the following estimate for the function $G$.
Here and below, we write $B(x, \delta)= \{ y\in \R\colon |x-y| < \delta\}$ for the Euclidean open ball in $\R$ with radius $\delta>0$ and centred at $x\in \R$.

\begin{lemma}
\label{lem:continuity-G}
Let $G$ be as in \eqref{eq:definition-G} and let $y\in B(x,\delta)$ for $x\in \R$ and $\delta>0$. Then
\begin{equation}
\label{eq:continuity-G}
G(x) e^{-\gamma \delta} \leq  G(y) \leq  G(x)e^{\gamma \delta}.
\end{equation}
\end{lemma}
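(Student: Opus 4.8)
The plan is to deduce \eqref{eq:continuity-G} from a single scaling property of the function $\Lambda(t)=-\log\E[e^{-t\MA}]$ introduced in \eqref{eq:mgf-balanced-gmc}. First I would observe that, by \eqref{eq:definition-G}, $G(x)=\Lambda(\lambda e^{\gamma x})$. Hence, for $y\in B(x,\delta)$, writing $t=\lambda e^{\gamma x}>0$ and $c=e^{\gamma(y-x)}\in(e^{-\gamma\delta},e^{\gamma\delta})$, one has $G(x)=\Lambda(t)$ and $G(y)=\Lambda(ct)$, so that \eqref{eq:continuity-G} becomes exactly the estimate $e^{-\gamma\delta}\Lambda(t)\le\Lambda(ct)\le e^{\gamma\delta}\Lambda(t)$.

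The two inputs I would use are the following. First, $\Lambda$ is non-negative and non-decreasing: since $\MA\ge 0$ almost surely, the map $t\mapsto\E[e^{-t\MA}]$ takes values in $(0,1]$ and is non-increasing, so $\Lambda\ge 0$ and $\Lambda$ is non-decreasing. Second, for every $c\ge 1$ one has $\Lambda(ct)\le c\,\Lambda(t)$ — equivalently, $t\mapsto\Lambda(t)/t$ is non-increasing, and equivalently $\Lambda(ct)\ge c\,\Lambda(t)$ for $0<c\le 1$. This I would get from Jensen's inequality applied to the convex function $z\mapsto z^{c}$ on $[0,\infty)$ (valid for $c\ge 1$) and the random variable $Z=e^{-t\MA}\in(0,1]$, which gives $e^{-\Lambda(ct)}=\E[Z^{c}]\ge(\E Z)^{c}=e^{-c\Lambda(t)}$. (Alternatively one can note that $\Lambda$ is concave with $\Lambda(0)=0$, since $\log\E[e^{-t\MA}]$ is a cumulant generating function, but the Jensen argument is the most direct.)

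Finally I would conclude by distinguishing whether $y$ lies above or below $x$. If $y\ge x$, then $c\ge 1$, the upper bound is $\Lambda(ct)\le c\Lambda(t)\le e^{\gamma\delta}\Lambda(t)$, and the lower bound is the trivial $\Lambda(ct)\ge\Lambda(t)\ge e^{-\gamma\delta}\Lambda(t)$, using monotonicity and $\Lambda\ge 0$. If $y\le x$, then $c\le 1$, the lower bound is $\Lambda(ct)\ge c\Lambda(t)\ge e^{-\gamma\delta}\Lambda(t)$ (using $c\ge e^{-\gamma\delta}$), and the upper bound is the trivial $\Lambda(ct)\le\Lambda(t)\le e^{\gamma\delta}\Lambda(t)$. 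There is no genuine obstacle here; the only points requiring care are keeping the orientations of the inequalities straight according to the sign of $y-x$, and invoking $\Lambda\ge 0$ (immediate from $\E[e^{-t\MA}]\le 1$) so that multiplying $\Lambda(t)$ by $e^{\pm\gamma\delta}$ enlarges, respectively shrinks, it in the intended direction.
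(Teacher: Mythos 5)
Your proposal is correct and follows essentially the same route as the paper: the paper also deduces the bound from monotonicity and non-negativity of $G$ together with Jensen's inequality applied to the convex map $r\mapsto r^{e^{\gamma\delta}}$ (and its concave counterpart), which is exactly your estimate $\E[Z^{c}]\ge(\E Z)^{c}$ for $Z=e^{-t\MA}$. The only cosmetic difference is that the paper first replaces $y$ by $x\pm\delta$ via monotonicity and then applies Jensen with the extreme exponent, whereas you apply Jensen with the exact exponent $c=e^{\gamma(y-x)}$ and bound $c$ afterwards.
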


\begin{proof}
Since $\MA$ is non-negative, $G$ is non-negative and increasing in $x$.
Therefore, we have 
\begin{equation}
\label{eq:g-increasing}
G(x-\delta ) \leq G(y) \leq G(x + \delta), \qquad y\in B(x,\delta).
\end{equation}
Since $e^{\gamma \delta} \geq 1$, the map $r\mapsto r^{e^{\gamma \delta}}$ is convex
and hence,
%using \eqref{eq:g-increasing} in the first inequality and
Jensen's inequality implies
\begin{equation}
%e^{-G(y)}\geq e^{-G(x+\delta)}=
\E[e^{-\lambda e^{\gamma (x+ \delta) }  \MA} ] = 
\E[\big(e^{-\lambda e^{\gamma x } \MA} \big)^{e^{\gamma \delta}} ]
\geq \Big(\E[e^{-\lambda e^{\gamma x } \MA} ] \Big)^{e^{\gamma\delta } } ,
%=e^{-e^{\gamma \delta} G(x)},
\end{equation}
from which we obtain by taking the logarithm
\begin{equation}
G(x+\delta) \leq e^{\gamma \delta} G(x).
\end{equation}
Together with \eqref{eq:g-increasing}, this proves the upper bound in \eqref{eq:continuity-G}.
The lower bound in \eqref{eq:continuity-G} is true by the same argument, now using Jensen's inequality together with the concavity of $r\mapsto r^{e^{-\gamma \delta}}$.
\end{proof}

For a measurable set $A\subseteq \R$, we define 
\begin{equation}
\label{def-fg}
g(k, A) = \P(H_k \in A)\quad \mbox{\rm  and}  \quad f(k, A) = \log g(k, A).
\end{equation}
When $A= B(x,\epsilon)$ for $x\in \R$ and $\epsilon>0$,
we also write $g(k,\epsilon) \equiv g(k, B(x, \epsilon) )$ and $f(k, \epsilon) \equiv f(k, B(x,\epsilon))$ provided the argument does not involve changing the point $x\in \R$.
The following result establishes submultiplicativity for $g(k, \epsilon)$.

\begin{lemma}
\label{lem:subadditivity-empirical-measure}
For any $x\geq 0$ and $\epsilon >0$ there is $\tilde \epsilon>0$ and a constant $c_{x,\epsilon}>0$ depending on $x$ and $\epsilon$ only such that, for all
$\ldlv, \ldlv_0 \in \N$ with $\ldlv \geq \ldlv_0$, we have
\begin{equation}
\label{eq:subadditivity-empirical-measure}
g(\ldlv, \epsilon) \geq g(\ldlv - \ldlv_0, \tilde \epsilon)^{d^{\ldlv_0}} e^{-c_{x,\epsilon} d^{\ldlv_0} }.
\end{equation}
\end{lemma}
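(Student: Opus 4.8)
The plan is to exploit the self-similarity of the balanced Branching Random Walk expressed in \eqref{eq:balanced-brw-subtree-same-law}, combined with the multiplicative continuity estimate of Lemma \ref{lem:continuity-G}. Fix $x\ge 0$ and $\epsilon>0$, and let $\ldlv\ge \ldlv_0$. Writing each $v\in\Gen{\ldlv}$ uniquely as a descendant $\ldlv-\ldlv_0$ generations below its ancestor $w=\anc{v}{\ldlv-\ldlv_0}\in\Gen{\ldlv_0}$, and using $A_v=A_w+A_v^w$ with $(A_v^w)_{v}$ a copy of the depth-$(\ldlv-\ldlv_0)$ balanced Branching Random Walk rooted at $w$, I would first rewrite
\begin{equation}
H_\ldlv=\frac{1}{d^{\ldlv_0}}\sum_{w\in\Gen{\ldlv_0}}\frac{1}{d^{\ldlv-\ldlv_0}}\sum_{v\in\Gen{\ldlv-\ldlv_0}^w}G\big(A_w+A_v^w\big).
\end{equation}
The key structural input is that $(A_v)_{v\in\Gen{\ldlv_0}}$ is independent of the family $\big((A_v^w)_v\big)_{w\in\Gen{\ldlv_0}}$, and that the latter consists of mutually independent copies (indexed by $w$) of the depth-$(\ldlv-\ldlv_0)$ walk.

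Next I would condition on the coarse field being uniformly small, i.e.\ on the event $\cB_{\ldlv_0,u}=\{\lpnorm{A}{\ldlv_0}{\infty}\le u\}$ from \eqref{eq:event-infty-norm-bounded}, for a parameter $u=u(x,\epsilon)>0$ to be fixed. On $\cB_{\ldlv_0,u}$ one has $|A_w|\le u$ for every $w\in\Gen{\ldlv_0}$, so Lemma \ref{lem:continuity-G} gives $e^{-\gamma u}G(A_v^w)\le G(A_w+A_v^w)\le e^{\gamma u}G(A_v^w)$ for all $v,w$. Since $G\ge 0$, summing sandwiches $H_\ldlv$ between $e^{-\gamma u}$ and $e^{\gamma u}$ times the convex combination $d^{-\ldlv_0}\sum_{w\in\Gen{\ldlv_0}}H^w_{\ldlv-\ldlv_0}$, where $H^w_{\ldlv-\ldlv_0}:=d^{-(\ldlv-\ldlv_0)}\sum_{v\in\Gen{\ldlv-\ldlv_0}^w}G(A_v^w)$ are independent copies of $H_{\ldlv-\ldlv_0}$, themselves independent of $(A_v)_{v\in\Gen{\ldlv_0}}$. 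I would then choose, depending only on $x$ and $\epsilon$, first $u>0$ so small that $(e^{\gamma u}-1)(x+1)\le\epsilon/2$ and $(1-e^{-\gamma u})(x+1)\le\epsilon/2$, and then $\tilde\epsilon\in(0,1)$ so small that $e^{\gamma u}\tilde\epsilon\le\epsilon/2$. A short case analysis (distinguishing $x\le\epsilon$ from $x>\epsilon$ for the lower inclusion, using $H_\ldlv\ge 0$) then shows that on the event
\begin{equation}
\cB_{\ldlv_0,u}\cap\bigcap_{w\in\Gen{\ldlv_0}}\{H^w_{\ldlv-\ldlv_0}\in B(x,\tilde\epsilon)\}
\end{equation}
the convex combination lies in $B(x,\tilde\epsilon)$, and hence $H_\ldlv\in B(x,\epsilon)$.

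With this inclusion in hand, I would invoke the independence of $(A_v)_{v\in\Gen{\ldlv_0}}$ from the subtree walks, together with the mutual independence of the latter, to get
\begin{equation}
g(\ldlv,\epsilon)\ge\P(\cB_{\ldlv_0,u})\prod_{w\in\Gen{\ldlv_0}}\P\big(H^w_{\ldlv-\ldlv_0}\in B(x,\tilde\epsilon)\big)=\P(\cB_{\ldlv_0,u})\,g(\ldlv-\ldlv_0,\tilde\epsilon)^{d^{\ldlv_0}}.
\end{equation}
Finally, the lower bound $\P(\cB_{\ldlv_0,u})\ge e^{-C_u d^{\ldlv_0}}$, already established in \eqref{eq:lower-bound-event-infty-norm} within the proof of Lemma \ref{lem:liouville-mgf-asymptotics-bounds} with $C_u$ depending only on $u$ and $d$, yields \eqref{eq:subadditivity-empirical-measure} with $c_{x,\epsilon}:=C_{u(x,\epsilon)}$ and the above $\tilde\epsilon=\tilde\epsilon(x,\epsilon)$.

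The main obstacle I anticipate is precisely the unboundedness of the coarse field $(A_w)_{w\in\Gen{\ldlv_0}}$: there is no deterministic control on $\sup_w|A_w|$, so one is forced to restrict to $\cB_{\ldlv_0,u}$, which is exactly the source of the exponential loss $e^{-c_{x,\epsilon}d^{\ldlv_0}}$. The delicate point is to arrange the choices of $u$ and $\tilde\epsilon$ — and hence the constant $c_{x,\epsilon}$ — so that they depend on $x$ and $\epsilon$ only and are uniform in $\ldlv$ and $\ldlv_0$; the estimates above are set up so that this holds, and the rest is routine bookkeeping.
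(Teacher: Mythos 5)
Your proposal is correct and follows essentially the same route as the paper: the same coarse/fine decomposition of $H_\ldlv$ over $\Gen{\ldlv_0}$, the same restriction to the event $\cB_{\ldlv_0,u}=\{\lpnorm{A}{\ldlv_0}{\infty}\le u\}$ combined with Lemma \ref{lem:continuity-G} to sandwich $H_\ldlv$ by $e^{\pm\gamma u}$ times an average of independent copies of $H_{\ldlv-\ldlv_0}$, and the same lower bound \eqref{eq:lower-bound-event-infty-norm} to pay for the event with the factor $e^{-c_{x,\epsilon}d^{\ldlv_0}}$. The only differences are cosmetic (your explicit choices of $u$ and $\tilde\epsilon$ versus the paper's condition \eqref{eq:determine-delta-subadditivity}).
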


\begin{proof}
Recall that for $v\in \Gen{\ldlv}$ we denote by $\anc{v}{\ldlv-\ldlv_0} \in \Gen{\ldlv_0}$ the unique ancestor of $v$ in level $\ldlv_0$
and by $\cT_{\ldlv-\ldlv_0}^w$ the subtree of depth $\ldlv-\ldlv_0$ starting from $w\in \Gen{\ldlv_0}$.
Let $(A^w)_{w\in \Gen{\ldlv_0}}$ be a collection of $d^{\ldlv_0}$ independent balanced Branching Random Walks.
Then we have
\begin{equation}
(A_v)_{v\in \Gen{\ldlv}} \stackrel{d}{=} ( A_{\anc{v}{\ldlv-\ldlv_0}} + A_v^{\anc{v}{\ldlv-\ldlv_0} } )_{ v\in \Gen{\ldlv} } .
\end{equation}
Since $A_{\anc{v}{\ldlv-\ldlv_0}} = A_{\anc{v'}{\ldlv-\ldlv_0}}$ for $v,v' \in \Gen{\ldlv-\ldlv_0}^w$,
it follows that
\begin{equation}
\label{eq:H-equality-distribution}
H_\ldlv \stackrel{d}{=} 
\frac{1}{d^{\ldlv_0}} \sum_{w \in \Gen{\ldlv_0} } \frac{1}{d^{\ldlv-\ldlv_0} } \sum_{v \in \Gen{\ldlv-\ldlv_0}^w } G(A_w + A_v^w).
\end{equation}
In the rest of the proof we implicitly assume that $H_\ldlv$ is realised through \eqref{eq:H-equality-distribution}.
Let now $\cB_{\ldlv_0,  \delta} = \{ \lpnorm{A}{\ldlv_0}{\infty} \leq \delta \}$ with $\delta>0$ to be determined below in \eqref{eq:determine-delta-subadditivity}.
Then we have by Lemma \ref{lem:continuity-G} on this event
\begin{equation}
\label{eq:hk-lower-upper-bound}
e^{-\gamma \delta } \Big(   \frac{1}{d^{\ldlv_0} } \sum_{w\in \Gen{\ldlv_0}}  H_{ \ldlv-\ldlv_0 }^w  \Big)\leq  
H_\ldlv 
\leq  e^{ \gamma \delta } \Big(  \frac{1}{d^{\ldlv_0} } \sum_{w\in \Gen{\ldlv_0}} H_{\ldlv-\ldlv_0}^w \Big),
\end{equation}
where we denote
\begin{equation}
H_{\ldlv-\ldlv_0}^w =  \frac{1}{d^{\ldlv-\ldlv_0}} \sum_{v\in \Gen{\ldlv-\ldlv_0}^w }  G(A_v^w).
\end{equation}
Note that $(H_{\ldlv-\ldlv_0}^w)_{w\in \Gen{\ldlv_0}}$ are independent and identically distributed with $H_{\ldlv-\ldlv_0}^w \stackrel{d}{=} H_{\ldlv-\ldlv_0}$.
It follows from \eqref{eq:hk-lower-upper-bound} that on the event
\begin{equation}
\{ \forall w\in \Gen{\ldlv_0} \colon H_{\ldlv-\ldlv_0}^w \in B (x, \tilde \epsilon) \} \cap \cB_{\ldlv_0, \delta },
\end{equation}
with $\tilde\epsilon>0$ to be determined below in \eqref{eq:determine-delta-subadditivity}, we have
\begin{equation}
e^{-\gamma \delta } (x-\tilde \epsilon) \leq H_\ldlv \leq e^{ \gamma \delta } (x+ \tilde \epsilon).
\end{equation}
Hence, we have on this event
\begin{equation}
-x(1-e^{-\gamma \delta } )  - e^{-\gamma \delta }  \tilde \epsilon \leq H_\ldlv - x
\leq x ( e^{ \gamma \delta }-1) + e^{\gamma \delta } \tilde \epsilon.
\end{equation}
We now choose $\delta>0$ and $\tilde \epsilon>0$ depending on $x\geq 0$ and $\epsilon>0$, such that
\begin{equation}
\label{eq:determine-delta-subadditivity}
x ( e^{ \gamma \delta }-1) + e^{\gamma \delta } \tilde \epsilon\leq \epsilon ,
\end{equation}
which then implies $x(1-e^{-\gamma \delta } ) +  e^{-\gamma \delta }  \tilde \epsilon \leq \epsilon$.
Thus, we obtain
\begin{equation}
\{ \forall w\in \Gen{\ldlv_0} \colon H_{\ldlv-\ldlv_0}^w \in B (x, \tilde \epsilon)   \} \cap \cB_{\ldlv_0, \delta} \subseteq 
\{  H_\ldlv \in B(x, \epsilon)  \}.
\end{equation}
Using the independence of $(H_{\ldlv-\ldlv_0}^w)_{w\in \Gen{\ldlv_0}}$ and $(A_w)_{w\in \Gen{\ldlv_0}}$ together with the lower bound \eqref{eq:lower-bound-event-infty-norm}, we then have
\begin{equation}
\P(H_\ldlv \in B(x, \epsilon ) ) \geq \P (H_{\ldlv-\ldlv_0} \in B(x, \tilde \epsilon) )^{d^{\ldlv_0}} e^{-c_{x,\epsilon} d^{\ldlv_0}}
\end{equation}
for a constant $c_{x,\epsilon}>0$ depending on $\delta$ and hence on $x$ and $\epsilon$.
\end{proof}

With the subadditivity at hand, we now establish the large deviations principle for $(H_k)_{k\in \N}$.
We first prove the following technical result, which gives rise to the definition of the rate function.

\begin{lemma}
\label{lem:log-probability-comparison}
For every $\epsilon>0$ and $x\geq 0$ there is $\tilde \epsilon >0$ such that
\begin{equation}
\liminf_{k\to \infty} \frac{f(k, B(x,  \epsilon) )}{d^k} \geq \limsup_{k\to\infty} \frac{f(k, B(x, \tilde \epsilon))}{d^k}.
\end{equation}
\end{lemma}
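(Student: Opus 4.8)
The plan is to read off the bound from the submultiplicativity estimate of Lemma~\ref{lem:subadditivity-empirical-measure} by a Fekete-type argument adapted to the $d^k$ normalization. Fix $x\geq 0$ and $\epsilon>0$, and let $\tilde\epsilon>0$ and $c_{x,\epsilon}>0$ be the quantities produced by Lemma~\ref{lem:subadditivity-empirical-measure}, so that, writing $g(k,\epsilon)=g(k,B(x,\epsilon))$ as in \eqref{def-fg} (legitimate since $x$ stays fixed throughout the proof),
\begin{equation}
\label{eq:plan-submult}
g(k,\epsilon) \;\geq\; g(k-k_0,\tilde\epsilon)^{d^{k_0}}\, e^{-c_{x,\epsilon} d^{k_0}} \qquad\text{for all } k,k_0\in\N \text{ with } k\geq k_0 .
\end{equation}
Taking logarithms and dividing by $d^k$, \eqref{eq:plan-submult} becomes
\begin{equation}
\label{eq:plan-submult-log}
\frac{f(k,\epsilon)}{d^k} \;\geq\; \frac{f(k-k_0,\tilde\epsilon)}{d^{k-k_0}} \;-\; c_{x,\epsilon}\, d^{-(k-k_0)} \qquad\text{for all } k\geq k_0 .
\end{equation}
This is the only structural input; the rest is a soft extraction of a limit.

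Set $L:=\limsup_{k\to\infty} d^{-k} f(k,\tilde\epsilon)\in[-\infty,0]$, where $L\le 0$ because $f$ is the logarithm of a probability. If $L=-\infty$ there is nothing to prove, so assume $L$ finite and pick a sequence $m_j\uparrow\infty$ with $d^{-m_j} f(m_j,\tilde\epsilon)\to L$; in particular $g(m_j,\tilde\epsilon)>0$ for $j$ large. For each such $m_j$ I would apply \eqref{eq:plan-submult-log} with $k_0=k-m_j$ for $k>m_j$, which gives
\begin{equation}
\label{eq:plan-fixed-m}
\frac{f(k,\epsilon)}{d^k} \;\geq\; \frac{f(m_j,\tilde\epsilon)}{d^{m_j}} \;-\; c_{x,\epsilon}\, d^{-m_j}, \qquad k>m_j .
\end{equation}
The right-hand side of \eqref{eq:plan-fixed-m} is independent of $k$, so taking $\liminf_{k\to\infty}$ yields $\liminf_{k\to\infty} d^{-k}f(k,\epsilon)\ge d^{-m_j}f(m_j,\tilde\epsilon)-c_{x,\epsilon}d^{-m_j}$ for every $j$; letting $j\to\infty$ and using $d^{-m_j}f(m_j,\tilde\epsilon)\to L$ together with $d^{-m_j}\to 0$ gives $\liminf_{k\to\infty} d^{-k}f(k,\epsilon)\ge L=\limsup_{k\to\infty} d^{-k}f(k,\tilde\epsilon)$, which is the assertion.

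I do not expect any genuine obstacle at this step. All the probabilistic substance — realizing $H_k$ through the $d^{k_0}$-fold branching decomposition \eqref{eq:H-equality-distribution}, controlling the common-ancestor contribution on the event $\{\lpnorm{A}{k_0}{\infty}\le\delta\}$ via the continuity estimate for $G$ in Lemma~\ref{lem:continuity-G}, and the small-ball lower bound \eqref{eq:lower-bound-event-infty-norm} — has already been packaged into Lemma~\ref{lem:subadditivity-empirical-measure}. The present lemma is precisely the passage from that submultiplicativity to the limiting quantity that will be used to define the rate function in the (weak) large deviation principle for $(H_k)_{k\in\N}$, and it is carried out just as in the classical Fekete argument; the only care needed is tracking the two scales $k$ and $k-k_0$ and choosing $k_0$ so that $k-k_0$ lands on a near-maximizing index $m_j$.
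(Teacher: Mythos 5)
Your proof is correct and follows essentially the same route as the paper: both take logarithms of the submultiplicativity bound from Lemma~\ref{lem:subadditivity-empirical-measure} and run a Fekete-type argument with the inner scale pinned to a sequence realizing the $\limsup$. The only cosmetic difference is that the paper extracts a joint subsequence $(k_l,k_l')$ with $k_l-k_l'\to\infty$, whereas you take the $\liminf$ over all $k$ for each fixed $m_j$; both are valid.
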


\begin{proof}
For $x \geq 0$ and $\epsilon>0$ given, let $\tilde \epsilon>0$ and $c_{x,\epsilon}$ be as in Lemma \ref{lem:subadditivity-empirical-measure}.
Moreover, we denote
\begin{equation}
a=\liminf_{k\to \infty} \frac{f(k, B(x, \epsilon) )}{d^k}, \qquad b= \limsup_{k\to\infty} \frac{f(k, B(x, \tilde \epsilon))}{d^k}
\end{equation}
and let $(k_l)_{l \in \N}$ and $(k_l')_{l\in \N}$ be sequences such that
\begin{equation}
a =\lim_{l\to \infty} \frac{f(k_l, B(x, \epsilon))}{d^{k_l}} , \qquad b = \lim_{l\to \infty} \frac{f(k_l', B(x, \tilde \epsilon))}{d^{k_l'}} .
\end{equation}
After extracting suitable subsequences, we may assume that $k_l-k_l' \to \infty$.
From Lemma \ref{lem:subadditivity-empirical-measure} with $k=k_l$ and $k_0 = k_l-k_l'$, we then get
\begin{equation}
\frac{f(k_l, B(x, \epsilon) ) }{d^{k_l}} \geq \frac{f(k_l', B(x, \tilde \epsilon))}{d^{k_l'}} - \frac{c_{x,\epsilon}}{d^{k_l'}}. 
\end{equation}
Taking $l\to \infty$, we arrive at $a\geq b$.
\end{proof}

\begin{proposition}
\label{prop:weak-ldp}
Let $\cA = \{ B(x,\epsilon)\cap \R_0^+ \colon x \geq  0, \, \epsilon>0\} $ and define $I \colon \R_0^+ \to [0,\infty]$,
\begin{equation}
\label{eq:rate-function}
I(x) = \sup_{A\in \cA \colon x \in A} - \liminf_{k\to \infty}  \frac{1}{d^k} f(k, A).
\end{equation}
Then,
\begin{equation}
\label{eq:weak-ldp-sufficient}
I(x) = \sup_{A\in \cA \colon x \in A} - \limsup_{k\to \infty}  \frac{1}{d^k} f(k, A).
\end{equation}
In particular, the laws of $(H_k)_k$ satisfy the weak large deviations principle with rate function $I$.
\end{proposition}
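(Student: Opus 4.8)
The plan is to obtain both statements from Lemma~\ref{lem:subadditivity-empirical-measure} and Lemma~\ref{lem:log-probability-comparison} by purely soft arguments. For $x\ge 0$ and $\epsilon>0$ I will abbreviate
\begin{align*}
J_\epsilon^{-}(x)&=-\liminf_{k\to\infty}\frac{1}{d^k}f(k,B(x,\epsilon)\cap\R_0^+),\\
J_\epsilon^{+}(x)&=-\limsup_{k\to\infty}\frac{1}{d^k}f(k,B(x,\epsilon)\cap\R_0^+),
\end{align*}
both of which lie in $[0,\infty]$ because $f(k,\cdot)\le 0$, and both of which are non-increasing in $\epsilon$ since $A\subseteq A'$ forces $f(k,A)\le f(k,A')$. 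The first observation is that the balls centred at $x$ are cofinal under inclusion in $\{A\in\cA:x\in A\}$: if $x\in B(x_0,\epsilon_0)\cap\R_0^+$ then $B(x,\epsilon_0-|x-x_0|)\cap\R_0^+\in\cA$ is a subset of it. Combining this with the monotonicity of $f(k,\cdot)$ yields $I(x)=\sup_{\epsilon>0}J_\epsilon^{-}(x)$ and, by the same reasoning, that the right-hand side of \eqref{eq:weak-ldp-sufficient} equals $\sup_{\epsilon>0}J_\epsilon^{+}(x)$.

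With this reformulation, \eqref{eq:weak-ldp-sufficient} is the identity $\sup_{\epsilon}J_\epsilon^{-}(x)=\sup_{\epsilon}J_\epsilon^{+}(x)$. The inequality $\sup_{\epsilon}J_\epsilon^{-}(x)\ge\sup_{\epsilon}J_\epsilon^{+}(x)$ is immediate from $\liminf\le\limsup$, i.e.\ from $J_\epsilon^{-}(x)\ge J_\epsilon^{+}(x)$ for each $\epsilon$. For the reverse inequality I will invoke Lemma~\ref{lem:log-probability-comparison}: given $\epsilon>0$, it furnishes $\tilde\epsilon>0$ with $J_\epsilon^{-}(x)\le J_{\tilde\epsilon}^{+}(x)\le\sup_{\epsilon'>0}J_{\epsilon'}^{+}(x)$; taking the supremum over $\epsilon$ then gives $\sup_{\epsilon}J_\epsilon^{-}(x)\le\sup_{\epsilon}J_\epsilon^{+}(x)$. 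This establishes \eqref{eq:weak-ldp-sufficient}.

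It then remains to verify that $I$ is a rate function obeying the weak LDP. Lower semicontinuity is short: given $\alpha<I(x)$ pick $\epsilon_0$ with $J_{\epsilon_0}^{-}(x)>\alpha$; for any $y\in B(x,\epsilon_0/2)$ one has $B(y,\epsilon_0/2)\cap\R_0^+\subseteq B(x,\epsilon_0)\cap\R_0^+$, whence $I(y)\ge J_{\epsilon_0/2}^{-}(y)\ge J_{\epsilon_0}^{-}(x)>\alpha$. For the large deviation lower bound, let $G\subseteq\R_0^+$ be (relatively) open and $x\in G$; choosing $\epsilon$ small enough that $B(x,\epsilon)\cap\R_0^+\subseteq G$ and using that $H_k\in\R_0^+$ almost surely, monotonicity of probability gives $\liminf_k\frac{1}{d^k}\log\P(H_k\in G)\ge\liminf_k\frac{1}{d^k}f(k,B(x,\epsilon)\cap\R_0^+)=-J_\epsilon^{-}(x)\ge-I(x)$, and optimising over $x\in G$ yields $\liminf_k\frac{1}{d^k}\log\P(H_k\in G)\ge-\inf_{x\in G}I(x)$. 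For the upper bound over a compact $K\subseteq\R_0^+$, fix $\delta>0$ and, using \eqref{eq:weak-ldp-sufficient} in the form $I(x)=\sup_\epsilon J_\epsilon^{+}(x)$, choose for each $x\in K$ a radius $\epsilon_x>0$ with $J_{\epsilon_x}^{+}(x)\ge\min\{I(x),\delta^{-1}\}-\delta$. Passing to a finite subcover $B(x_1,\epsilon_{x_1}),\dots,B(x_N,\epsilon_{x_N})$ of $K$ and using a union bound together with the elementary fact that $\limsup_k\frac{1}{d^k}\log\sum_{i=1}^N\P(H_k\in B(x_i,\epsilon_{x_i})\cap\R_0^+)=\max_i\bigl(-J_{\epsilon_{x_i}}^{+}(x_i)\bigr)$ (the number of terms being fixed), I obtain $\limsup_k\frac{1}{d^k}\log\P(H_k\in K)\le-\bigl(\min\{\inf_{x\in K}I(x),\delta^{-1}\}-\delta\bigr)$, and letting $\delta\downarrow 0$ completes the argument.

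I do not expect a genuine obstacle here: the substantive work has already been done in the subadditivity estimate of Lemma~\ref{lem:subadditivity-empirical-measure} and in Lemma~\ref{lem:log-probability-comparison}. The only points needing care are the bookkeeping of inequality directions when moving between $f$, its $\liminf$/$\limsup$ and their negatives, and the degenerate cases $\P(H_k\in A)=0$ (so $f(k,A)=-\infty$) and $I(x)=+\infty$ in the compact upper bound, which is precisely why the truncation $\min\{I(x),\delta^{-1}\}$ is inserted.
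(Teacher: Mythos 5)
Your proof is correct, and the substantive step --- reducing the supremum in \eqref{eq:rate-function} to balls centred at $x$ and then deducing \eqref{eq:weak-ldp-sufficient} from Lemma \ref{lem:log-probability-comparison} --- is exactly the paper's argument, just phrased directly rather than by contradiction. The only divergence is that where the paper concludes the ``in particular'' clause by citing \cite[Theorem 4.1.11]{MR1619036}, you re-derive that criterion from scratch (lower semicontinuity, the open-set lower bound via inclusion, the compact-set upper bound via a finite subcover and union bound), which is a correct but longer route to the same place.
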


\begin{proof}
In light of \cite[Theorem 4.1.11]{MR1619036} we need to show that
\begin{equation}
\sup_{A\in \cA \colon x \in A} - \liminf_{k\to \infty}  \frac{1}{d^k} f(k, A)
= \sup_{A\in \cA \colon x \in A} - \limsup_{k\to \infty}  \frac{1}{d^k} f(k, A).
\end{equation}
Assume that this is not the case. Then there is $x \in \R_0	^+$ and $\delta>0$, such that
\begin{equation}
\label{eq:ldp-strict-inequality}
\sup_{A\in \cA \colon x \in A} - \liminf_{k\to \infty}  \frac{1}{d^k} f(k, A)
\geq \sup_{A\in \cA \colon x \in A} - \limsup_{k\to \infty}  \frac{1}{d^k} f(k, A) +\delta .
\end{equation}
Let $A_{\delta/2} \in \cA$ be such that it approximates the supremum on the left hand side of \eqref{eq:ldp-strict-inequality} up to precision $\delta/2$, i.e.\ $x\in A_{\delta/2}$ and
\begin{equation}
\sup_{A\in \cA \colon x \in A} - \liminf_{k\to \infty}  \frac{1}{d^k} f(k, A) \leq - \liminf_{k\to \infty}  \frac{1}{d^k} f(k, A_{\delta/2}) + \delta/2 ,
\end{equation}
so that we have from \eqref{eq:ldp-strict-inequality}
\begin{equation}
\label{eq:proving-weak-ldp-comparision-liminf-limsup}
- \liminf_{k\to \infty}  \frac{1}{d^k} f(k, A_{\delta/2})
\geq
\sup_{A\in \cA \colon x \in A} - \limsup_{k\to \infty}  \frac{1}{d^k} f(k, A) +\delta/2 .
\end{equation}
Note that we may assume that $A_{\delta/2} = B(x,\epsilon)\cap [0,\infty)$ for some $\epsilon>0$. 
By Lemma \ref{lem:log-probability-comparison}, we have that there exists $\tilde \epsilon>0$, such that
\begin{equation}
\limsup_{k\to\infty} f(k, B(x, \tilde \epsilon ) ) \leq \liminf_{k\to \infty} f(k, B(x,\epsilon) ).
\end{equation}
Thus, replacing the supremum on the right hand side of \eqref{eq:proving-weak-ldp-comparision-liminf-limsup} by just $B(x, \tilde \epsilon)$, we have 
\begin{equation}
- \limsup_{k\to \infty}  \frac{1}{d^k} f(k,  B(x,\tilde \epsilon))
\geq 
- \limsup_{k\to \infty}  \frac{1}{d^k} f(k,  B(x,\tilde \epsilon ) ) + \delta/2 ,
%\geq 
%- \liminf_{k\to \infty}  \frac{1}{d^k} f(k,  B(x,\epsilon))  + \delta/2 ,
\end{equation}
which is a contradiction. Thus, \eqref{eq:weak-ldp-sufficient} holds.

Since $\cA$ is a base of the standard topology on $\R_+$, we have by \cite[Theorem 4.1.11]{MR1619036} that the laws of $(H_k)_{k}$ satisfy the weak large deviations principle with rate function $I$. 
\end{proof}

\begin{remark}
By definition (see \cite[Page 7]{MR1619036}, the weak large deviations principle is equivalent to the following upper and lower bounds:
% for the rate function $I$:
\begin{enumerate}
\item 
for all compact sets $F \subseteq \R$, we have
\begin{equation}
\label{eq:ld-rate-function-compact-sets}
\limsup_{k\to \infty} \frac{1}{d^k} \log \P( H_k \in F) \leq -\inf_{x\in F} I(x) ,
\end{equation}
\item for all open sets $G\subseteq \R$, we have
\begin{equation}
\label{eq:ld-rate-function-open-sets}
\liminf_{k\to \infty} \frac{1}{d^k} \log \P( H_k \in F) \geq -\inf_{x\in G} I(x) .
\end{equation}
\end{enumerate}
\end{remark}
The representation of the negative exponential moments of $\MA$ in \eqref{eq:mgf-as-ld} together with the weak large deviations principle for $(H_k)_{k}$ put us in the situation similar to Varadhan's lemma as stated for instance in \cite[Theorem 4.3.1]{MR1619036}.
In our case the rate function $I$ may not have compact level sets, which we overcome with the a priori bounds of Lemma \ref{lem:liouville-mgf-asymptotics-bounds}.

\begin{proof}[Proof of Theorem \ref{thm:liouville-mgf-asymptotics}]
To prove the convergence \eqref{eq:liouville-mgf-asymptotics}, it suffices to prove that, as $\ldlv \to \infty$, 
\begin{equation}
\label{eq:ld-convergence}
-\frac{1}{d^k} \log \E[e^{-d^k H_k}] \to 
h(\lambda),
\end{equation}
where $H_k$ is as in \eqref{eq:definition-ld-rv}. 
By Lemma \ref{lem:liouville-mgf-asymptotics-bounds} we already know that the term on the left hand side in \eqref{eq:ld-convergence} is bounded from below and above uniformly in $k\geq 1$. 
To prove the claimed convergence, we first write
\begin{equation}
\E[e^{-d^k H_k}]
= \E [e^{-d^k H_k} \mathbf{1}_{H_k \leq \bC}] 
+ \E [e^{-d^k H_k} \mathbf{1}_{H_k > \bC}]
\end{equation}
and observe that it suffices to prove that
\begin{equation}
\label{eq:ld-convergence-compact-set}
-\frac{1}{d^k} \log \E [e^{-d^k H_k} \mathbf{1}_{H_k \leq \bC}] \to h(\lambda)\in (0,\infty)
\end{equation}
once $\bC$ is large enough.
Indeed, assuming that \eqref{eq:ld-convergence-compact-set} holds, we have
\begin{align}
\label{eq:ld-indicator-upper}
-\frac{1}{d^k} \log \E [e^{-d^k H_k} ]  &\leq -\frac{1}{d^k} \log \E [e^{-d^k H_k} \mathbf{1}_{H_k \leq \bC}] , \\
\label{eq:ld-indicator-lower}
-\frac{1}{d^k} \log \E [e^{-d^k H_k} ]
&\geq -\frac{1}{d^k} \log \big( \E [e^{-d^k H_k} \mathbf{1}_{H_k \leq \bC}] + e^{-d^k \bC} \big).
\end{align}
Now, from \eqref{eq:ld-indicator-upper} together with \eqref{eq:ld-convergence-compact-set} we can deduce
\begin{equation}
\limsup_{k\to \infty} -\frac{1}{d^k} \log \E [e^{-d^k H_k} ] \leq h(\lambda) ,
\end{equation}
while \eqref{eq:ld-indicator-lower} together with \eqref{eq:ld-convergence-compact-set} implies for $\delta >0$
\begin{align}
\liminf_{k\to \infty} -\frac{1}{d^k} \log \E [e^{-d^k H_k} ]
&\geq
\liminf_{k\to \infty} \Big(- \frac{1}{d^k} \log
\Big(
\E [e^{-d^k H_k} \mathbf{1}_{H_k \leq \bC}]\big(1 + e^{-d^k \bC} \big( \E [e^{-d^k H_k} \mathbf{1}_{H_k \leq \bC}]\big)^{-1}\big)
\Big) \Big)\nnb
& \geq
\liminf_{k\to \infty}\Big( - \frac{1}{d^k} \log \E [e^{-d^k H_k} \mathbf{1}_{H_k \leq \bC} ]
- \frac{1}{d^k}\log \big (1 + e^{-d^k \bC} e^{d^k h(\lambda) (1+\delta) }
\big)\Big) \nnb
&\geq 
\liminf_{k\to \infty} - \frac{1}{d^k} \log \big(
\E [e^{-d^k H_k} \mathbf{1}_{H_k \leq \bC}] 
-  \limsup_{k\to \infty} \frac{1}{d^k} e^{-d^k \bC} e^{d^k h(\lambda)(1+\delta) } \nnb
&\geq
\liminf_{k\to \infty } - \frac{1}{d^k} \log \E [e^{-d^k H_k} \mathbf{1}_{H_k \leq \bC} ]
= h(\lambda)
\end{align}
for $\bC$ chosen large enough such that $\bC>h(\lambda)(1+\delta)$.

It remains to prove \eqref{eq:ld-convergence-compact-set} for which we use a large deviations argument similar to the proof \cite[Theorem 4.3.1]{MR1619036}.
The difference to this reference is that the rate function $I \colon \R\to [0,\infty]$ defined in \eqref{eq:rate-function-liminf} may not have compact level sets, which requires us to modify the proof of the lower bound in \eqref{eq:ld-convergence-compact-set}.
We first observe that, by \cite[Theorem 4.1.18]{MR1619036} (and the remark following it),
the rate function $I$ can be equivalently expressed through
\begin{equation}
\label{eq:rate-function-liminf}
I(x) = \lim_{\epsilon \to 0} \liminf_{k\to \infty} - \frac{1}{d^k} \log \P(H_k \in B(x,\epsilon) )
\end{equation}
and
\begin{equation}
\label{eq:rate-function-limsup}
I(x) = \lim_{\epsilon \to 0} \limsup_{k\to \infty} -  \frac{1}{d^k} \log \P(H_k \in B(x, \epsilon) ).
\end{equation}

For the upper bound in \eqref{eq:ld-convergence-compact-set} we observe that we have for any $x \in (0,\bC)$ and $\epsilon>0$ such that $B(x,\epsilon) \subseteq [0,\bC]$
\begin{align}
-\frac{1}{d^k} \log \E [e^{-d^k H_k} \mathbf{1}_{H_k \leq \bC}]
&\leq -\frac{1}{d^k} \log \E[e^{-d^k H_k} \mathbf{1}_{H_k \in B(x,\epsilon)}] 
\leq
-\frac{1}{d^k} \log \E[e^{-d^k (x+ \epsilon) } \mathbf{1}_{H_k \in B(x,\epsilon)}] \nnb
& \leq 
 (x+ \epsilon) - \frac{1}{d^k} \log \P( H_k \in B(x,\epsilon) ).
\end{align}
Taking $k\to \infty$ we obtain
\begin{equation}
\limsup_{k\to \infty}  -\frac{1}{d^k} \log \E [e^{-d^k H_k} \mathbf{1}_{H_k \leq \bC}] 
\leq x + \limsup_{k\to \infty} - \frac{1}{d^k} \log \P(H_k \in B(x, \epsilon) ) + \epsilon.
\end{equation}
Sending $\epsilon\to 0$ and using \eqref{eq:rate-function-limsup}, it follows that 
\begin{equation}
\limsup_{k\to \infty}  - \frac{1}{d^k} \log \E [e^{-d^k H_k} \mathbf{1}_{H_k \leq \bC}] \leq x + I(x)
\end{equation}
Taking the infimum over $x\in [0,\bC)$ yields
\begin{equation}
\label{eq:ld-proof-upper}
\limsup_{k\to \infty}  -\frac{1}{d^k} \log \E [e^{-d^k H_k} \mathbf{1}_{H_k \leq \bC}] \leq  \inf_{x\in [0,\bC) } \{ x + I(x) \}=\inf_{x\in [0,\infty)} \{x+I(x)\},
\end{equation}
if $\bC$ is large enough, since $I(x)\geq 0$.

For the lower bound in \eqref{eq:ld-convergence-compact-set},
we fix $r<\infty$ and $\delta>0$ and denote by $\Psi_I(r) = \{ x\in \R \colon I(x) \leq r \}$ the level sets of $I$. 
Note that $I$ might not be a good rate function, i.e.\ the level sets are not necessarily compact.
We overcome this problem by considering the set $\Psi_I(r) \cap [0,\bC]$ instead.
This is a compact set, since $\Psi_I(r)$ is closed.
Since $I$ is lower-semicontinuous, there is a neighbourhood $A_x$ for any $x\in\R$, such that
\begin{equation}
\label{eq:ld-choice-A-x}
\inf_{y\in \bar A_x} I(y) \geq I(x) - \delta, \qquad A_x \subseteq B(x,\delta).
\end{equation}
Note that
\begin{equation}
\bigcup_{x\in \Psi_I(r)\cap [0,\bC] } A_x
\end{equation}
is an open cover of the compact set $\Psi_I(r) \cap [0,\bC]$. 
Thus, we can extract a finite subcover
\begin{equation}
\Psi_I(r) \cap [0,\bC] \subseteq \bigcup_{i=1}^N A_{x_i}
\end{equation}
for $x_i \in \Psi_I(r) \cap [0,\bC]$, $i=1,\ldots, N$.
Using \eqref{eq:ld-choice-A-x} we then have
\begin{align}
\label{eq:ld-lower-estimate-expectation}
\E [e^{-d^k H_k} \mathbf{1}_{H_k \leq \bC} ]
&= 
\E \big[ e^{-d^k H_k} \mathbf{1}_{ H_k \in [0,\bC] \cap (\cup_{i=1}^N A_{x_i} ) } \big]
+ 
\E \big[e^{-d^k H_k} \mathbf{1}_{ H_k \in [0, \bC] \cap \big( \cup_{i=1}^N A_{x_i}\big)^c } \big ] \nnb
& \leq 
\E [e^{-d^k H_k} \mathbf{1}_{H_k \in [0,\bC] \cap (\cup_{i=1}^N A_{x_i} )} ]
+ 
 \P \big( H_k \in [0, \bC] \cap \big( \cup_{i=1}^N A_{x_i} \big)^c  \big) \nnb
& \leq 
\sum_{i=1}^N \E \big[e^{-d^k H_k} \mathbf{1}_{H_k \in [0,\bC] \cap  A_{x_i} } \big]
+ 
\P \big( H_k \in [0, \bC] \cap \big( \cup_{i=1}^N A_{x_i}\big)^c  \big) \nnb
&\leq
\sum_{i=1}^N e^{-d^k (x_i-\delta)}  \P \big( H_k \in  \overline{ A_{x_i} } \big) 
+ 
 \P \big( H_k \in  [0,\bC] \cap \big( \cup_{i=1}^N A_{x_i}\big)^c  \big) 
\end{align}
Using the large deviations upper bound \eqref{eq:ld-rate-function-compact-sets} for the compact sets $\overline{A_{x_i}}$ and  $[0,\bC] \cap \big( \cup_{i=1}^N A_{x_i}\big)^c $, we obtain from \eqref{eq:ld-lower-estimate-expectation}
\begin{align}
\liminf_{k\to \infty } - \frac{1}{d^k} \log \E [e^{-d^k H_k} \mathbf{1}_{H_k \leq \bC} ]
&\geq \min \Big\{  \min_{i=1, \ldots, N} \{  x_i-\delta - \limsup_{k\to \infty}\frac{1}{d^k} \log \P(H_k \in \overline{ A_{x_i} } ) \} , \nnb
&  -\limsup_{k\to \infty } \frac{1}{d^k} \log \P(H_k \in [0,\bC] \cap \big( \cup_{i=1}^N A_{x_i}\big)^c)  \Big\} \nnb
& \geq
\min \Big\{   \min_{i=1,\ldots, N} \{ x_i-\delta + \inf_{y \in \overline{A_{x_i} } } I(y) \} , \, \inf_{y\in [0,\bC] \cap \big( \cup_{i=1}^N A_{x_i}\big)^c}  I(y) \Big\} \nnb
&\geq \min \Big\{   \min_{i=1,\ldots, N} \{ x_i-\delta + I(x_i) -\delta \} , \, r \Big\} .
\end{align}
In the last step we used \eqref{eq:ld-choice-A-x} and the observation
\begin{equation}
[0, \bC] \cap \Psi_I(r) \subseteq \bigcup_{i=1}^N A_{x_i}
\qquad \implies \qquad
\big( \bigcup_{i=1}^N A_{x_i} \big)^c\cap [0,\bC] \subseteq \Psi_I(r)^c,
\end{equation}
which implies that $I(x) >r$ for all $x \in \big( \cup_{i=1}^N A_{x_i} \big)^c \cap [0,\bC]$.
Taking $r\to \infty$ and then $\delta\to 0$ gives
\begin{equation}
\liminf_{k\to \infty } - \frac{1}{d^k} \log \E [e^{-d^k H_k} \mathbf{1}_{H_k \leq \bC} ]
\geq
\min_{i=1,\ldots, N} \{ x_i + I(x_i) \} \geq \inf_{x\in [0,\bC]} \{x + I(x) \}\geq \inf_{x\in [0,\infty)} \{x + I(x) \}.
\end{equation}
We thus see that \eqref{eq:ld-convergence-compact-set} holds with $h(\lambda)=\inf_{x\in [0,\infty)}\{x+I(x)\}$,
and the finiteness of $h(\lambda)$ is due to Lemma \ref{lem:liouville-mgf-asymptotics-bounds}.

It remains to prove that $h$ is continuous in $\lambda$ and that the convergence in \eqref{eq:liouville-mgf-asymptotics} is uniform on compact sets.
To this end, we prove that for $\lambda_1 \leq \lambda_2$ we have 
\begin{equation}
\label{eq:h-continuity}
h(\lambda_1) \leq h(\lambda_2) \leq \frac{\lambda_2}{\lambda_1} h(\lambda_1).
\end{equation}
This implies
\begin{equation}
0 \leq  h(\lambda_2) - h(\lambda_1)  \leq \frac{\lambda_2}{\lambda_1} h(\lambda_1) - h(\lambda_1) = \big( \frac{\lambda_2}{\lambda_1} - 1 \big) h(\lambda_1) ,
\end{equation}
from which the continuity readily follows.

To prove \eqref{eq:h-continuity} we first denote
\begin{equation}
h_k = - \frac{1}{d^k} \log \E[e^{-\lambda \base^k \MA } ].
\end{equation}
Since $h_k$ is increasing in $\lambda$ we have $h_k (\lambda_1) \leq h_k(\lambda_2)$ for all $k\geq 1$, so that the left inequality in \eqref{eq:h-continuity} follows when taking $k\to \infty$. 

To prove the right inequality, we apply Jensen's inequality to obtain
\begin{equation}
\E[e^{-\lambda_2 \base^k \MA} ] = \E[e^{-\frac{\lambda_2}{\lambda_1} \lambda_1 \base^k \MA} ] \geq \E[ e^{-\lambda_1 \base^k \MA }  ]^{\frac{\lambda_2}{\lambda_1}}.
\end{equation}
Thus, taking logarithms yields
\begin{equation}
h_k(\lambda_2) \leq \frac{\lambda_2}{\lambda_1} h_k(\lambda_1) ,
\end{equation}
and so the right inequality of \eqref{eq:h-continuity} follows again when taking $k\to \infty$.

Finally, we prove that the convergence \eqref{eq:liouville-mgf-asymptotics} is uniform on compact sets. By the previous lines, we have
\begin{equation}
h_k(\lambda_1) \leq h_k (\lambda_2) \leq \frac{\lambda_2}{\lambda_1} h_k(\lambda_1),
\end{equation}
which implies
\begin{equation}
0 \leq  h_k(\lambda_2) - h_k(\lambda_1)  \leq \frac{\lambda_2}{\lambda_1} h_k(\lambda_1) - h_k(\lambda_1)
= \big( \frac{\lambda_2}{\lambda_1} - 1 \big) h_k(\lambda_1) = \frac{(\lambda_2 - \lambda_1 )}{ \lambda_1} h_k(\lambda_1).
\end{equation}
Let now $K\subseteq (0, \infty)$ be a compact set.
Then
\begin{equation}
0 \leq h_k(\lambda_2) - h_k(\lambda_1) \leq C_K (\lambda_2 - \lambda_1), \qquad C_K = \sup_{\lambda \in K} \frac{1}{\lambda} h_k (\lambda),
\end{equation}
which shows that the family $(h_k\mid_K)_{k\in \N}$ is uniformly Lipschitz continuous with Lipschitz constant bounded by $C_K$.
It then follows that the convergence $h_k\to h$ as $k\to \infty$ is uniform on $K$.
%Finally, we may choose $\bC$ for which
%\begin{equation}
%\inf_{x\in [0,\bC]} \{x + I(x) \} = \inf_{x\in (0,\bC)} \{x + I(x) \}
%\end{equation}
%or take $\bC \to \infty$.
\end{proof}

\subsection{Exponential moments of $\Mplus + \Mminus$:  Proof of Theorem \ref{thm:sinh-mgf-asymptotics}}
\label{ssec:sinh-mgf-asymptotics}

As we indicated earlier, our method of proof also applies to the negative exponential moments of $\Mplus + \Mminus$,
where we recall that $\Mplus$ and $\Mminus$ are constructed from the usual Branching Random Walks $S$ and $-S$.
The two key observation are that, first, the arguments underlying the subadditivity \eqref{eq:subadditivity-empirical-measure} do not require the Branching Random Walk to be balanced,
and second, Lemma \ref{lem:liouville-mgf-asymptotics-bounds} also holds with $\frac{1}{2} (\Mplus + \Mminus)$ in place of $\MA$, as we now show.
%Indeed, 
%using the decomposition \eqref{eq:gmc-unbalanced-decomposition} we obtain
%\begin{equation}
%\label{eq:sinh-gmc-decomposition}
%\Mplus + \Mminus
%\stackrel{d}{=}  e^{\gamma X- \frac{\gamma^2}{2} \frac{1}{d-1} } \MAplus + e^{- \gamma X - \frac{\gamma^2}{2} \frac{1}{d-1} } \MAminus ,
%\end{equation}
%where $\MAplus$ and $\MAminus$ are the two Gaussian multiplicative chaoses constructed from the balanced Branching Random Walks $A$ and $-A$.
%Thus, the a priori bounds for the negative exponential moments of $\Mplus + \Mminus$ readily follow from \eqref{eq:sinh-gmc-decomposition} and Lemma \ref{lem:liouville-mgf-asymptotics-bounds} applied to $\MAplus$ and $\MAminus$ as shown below.
\begin{lemma}
\label{lem:sinh-mgf-asymptotics-bounds}
Let $\alpha$ be as in \eqref{eq:exponents-first-order}.
There exist constants $0< c \leq C$ and $t_0\geq 0$, such that for $t \geq t_0$
\begin{equation}
\label{eq:sinh-mgf-asymptotics-bounds}
e^{-C t^\alpha }\leq
\E[e^{- \frac{t}{2}( \Mplus + \Mminus ) } ]
\leq e^{- c t^\alpha }.
\end{equation}
\end{lemma}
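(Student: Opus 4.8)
The plan is to follow the proof of Lemma~\ref{lem:liouville-mgf-asymptotics-bounds} almost verbatim, the one genuinely new point being that the identity $\sum_{v\in\Gen{\ell}}A_v=0$ used there is unavailable for $S$. Concretely, I would first establish the two-sided small-deviation estimate
\begin{equation*}
e^{-C'/s^{\kappa}}\ \le\ \P\big(\Mplus+\Mminus\le s\big)\ \le\ e^{-c'/s^{\kappa}},\qquad s\to 0,
\end{equation*}
for some $0<c'\le C'$, and then deduce \eqref{eq:sinh-mgf-asymptotics-bounds} from it exactly as \eqref{eq:liouville-mgf-asymptotics-bounds} is deduced from \eqref{eq:crude-small-ball-asymptotics}: the elementary bounds $\E[e^{-uZ}]\le e^{-u\delta}+\P(Z\le\delta)$ and $\E[e^{-uZ}]\ge e^{-u\delta}\P(Z\le\delta)$, optimised over $\delta$ of order $u^{-1/(\kappa+1)}$ and applied with $u=t/2$, produce matching upper and lower bounds of order $e^{-\Theta(t^{\alpha})}$, since $\alpha=\kappa/(\kappa+1)$.

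For the lower bound on $\P(\Mplus+\Mminus\le s)$ I would repeat the argument around \eqref{eq:lower-bound-small-ball-good-event}. Set $s=Re^{-\frac{\gamma^{2}}{2}\ell}$ with $R>1$. The recursion \eqref{eq:brw-gmc-recursion}, applied simultaneously to $\Mplus$ and $\Mminus$ at level $\ell$ with the same walk $S$, realises $\Mplus+\Mminus$ as $e^{-\frac{\gamma^{2}}{2}\ell}d^{-\ell}\sum_{v\in\Gen{\ell}}\big(e^{\gamma S_{v}}\Mplus_{v}+e^{-\gamma S_{v}}\Mminus_{v}\big)$, where $(\Mplus_{v},\Mminus_{v})_{v\in\Gen{\ell}}$ are i.i.d.\ copies of $(\Mplus,\Mminus)$ independent of $(S_{v})_{v\in\Gen{\ell}}$; on $\cB_{\ell,u}=\{\lpnorm{S}{\ell}{\infty}\le u\}$ this is at most $e^{\gamma u}e^{-\frac{\gamma^{2}}{2}\ell}d^{-\ell}\sum_{v\in\Gen{\ell}}(\Mplus_{v}+\Mminus_{v})$. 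By the law of large numbers the empirical average converges to $\E[\Mplus]+\E[\Mminus]=2$, so choosing $R,u$ with $Re^{-\gamma u}>2$ it is $\le Re^{-\gamma u}$ with probability $\ge\tfrac12$ for $\ell$ large; intersecting with $\cB_{\ell,u}$, using $\P(\cB_{\ell,u})\ge e^{-C_{u}d^{\ell}}$ from \eqref{eq:lower-bound-event-infty-norm} — whose proof uses only the independent-increment construction, not balancedness — and $d^{\ell}=(R/s)^{\kappa}$ gives the lower bound.

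The upper bound on $\P(\Mplus+\Mminus\le s)$ is where the proof must genuinely differ, and I expect this to be the main obstacle. Expanding $\Mplus$ and $\Mminus$ at level $\ell$ and applying Jensen's inequality (arithmetic--geometric mean) to each of the two sums separately gives, with $\Xi_{\ell}=\gamma d^{-\ell}\sum_{v\in\Gen{\ell}}S_{v}$,
\begin{equation*}
\Mplus+\Mminus\ \ge\ e^{-\frac{\gamma^{2}}{2}\ell}\max\Big(e^{\Xi_{\ell}}\prod_{v\in\Gen{\ell}}(\Mplus_{v})^{1/d^{\ell}},\ \ e^{-\Xi_{\ell}}\prod_{v\in\Gen{\ell}}(\Mminus_{v})^{1/d^{\ell}}\Big).
\end{equation*}
Unlike in the balanced case the Gaussian term $\Xi_{\ell}$ does not vanish, and it has $O(1)$ variance so it cannot be controlled on its own; but it enters the two factors with opposite signs. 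Writing $s=re^{-\frac{\gamma^{2}}{2}\ell}$ with $r<1$, the event $\{\Mplus+\Mminus\le s\}$ forces both factors to be $\le r$, and multiplying the two inequalities cancels $\Xi_{\ell}$ and yields $\prod_{v\in\Gen{\ell}}(\Mplus_{v}\Mminus_{v})^{1/d^{\ell}}\le r^{2}$. A Markov bound with a fixed $\theta>0$, as in \eqref{eq:upper-bound-small-ball-by-markov}, then gives
\begin{equation*}
\P(\Mplus+\Mminus\le s)\ \le\ r^{2\theta d^{\ell}}\,\E\big[(\Mplus\Mminus)^{-\theta}\big]^{d^{\ell}},
\end{equation*}
and $\E[(\Mplus\Mminus)^{-\theta}]<\infty$ by Cauchy--Schwarz together with the finiteness of $\E[(\Mplus)^{-2\theta}]$ and $\E[(\Mminus)^{-2\theta}]$, which follows from \cite[Theoreme IIA]{MR1717530} or \cite[Theorem 4a]{MR1400758} exactly as for $\MA$. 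For $r$ small enough the bracket is $<1$, whence $\P(\Mplus+\Mminus\le s)\le e^{-c'd^{\ell}}=e^{-c'(r/s)^{\kappa}}$, the desired upper bound.

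Finally, I note that the whole estimate can alternatively be obtained directly from Lemma~\ref{lem:liouville-mgf-asymptotics-bounds} via the distributional identity \eqref{eq:gmc-unbalanced-decomposition}: applied to $S$ and to $-S$ it gives $\Mplus+\Mminus\stackrel{d}{=}e^{\gamma X-\frac{\gamma^{2}}{2(d-1)}}\MAplus+e^{-\gamma X-\frac{\gamma^{2}}{2(d-1)}}\MAminus$ with the same Gaussian $X$, independent of $(\MAplus,\MAminus)$. The lower bound then follows by restricting to $\{|X|\le 1\}$, on which both prefactors are bounded above and below by constants, and the upper bound by bounding $e^{-\frac{t}{2}(\Mplus+\Mminus)}$ by $e^{-\frac{t}{2}e^{\gamma X-\frac{\gamma^{2}}{2(d-1)}}\MAplus}$ when $X\ge 0$ and by $e^{-\frac{t}{2}e^{-\gamma X-\frac{\gamma^{2}}{2(d-1)}}\MAminus}$ when $X<0$, in both cases reducing to Lemma~\ref{lem:liouville-mgf-asymptotics-bounds}.
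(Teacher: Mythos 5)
Your proof is correct, and your lower bound and the conversion from small-deviation estimates to exponential moments coincide with the paper's (the paper likewise replaces $\cA_{\ell,u}$ by the event $\tilde\cA_{\ell,u}$ on the increments $X_e$ and invokes the law of large numbers for $\frac{1}{d^\ell}\sum_v(\Mplus_v+\Mminus_v)\to 2$). Where you genuinely diverge is the upper bound on $\P(\Mplus+\Mminus\le s)$. The paper does \emph{not} try to salvage the AM--GM argument: it bounds $e^{\gamma S_v}\Mplus_v+e^{-\gamma S_v}\Mminus_v\ge \Mplus_v\mathbf{1}_{S_v\ge 0}+\Mminus_v\mathbf{1}_{S_v\le 0}$, observes that at least half of the $S_v$ share a sign, and thus reduces the event to $\{\frac{1}{d^\ell}\sum_{i=1}^{d^\ell/2}M^S_{v_i}\le r\}$ for i.i.d.\ copies of $M^S$, which is killed by a Chernoff bound using only $\E[e^{-\theta M^S}]<1$ (immediate from $M^S\ge 0$, $\E M^S=1$) — no negative-moment input is needed for this step. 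Your route instead applies Jensen to $\Mplus$ and $\Mminus$ separately and multiplies the two resulting inequalities so that the non-vanishing average $\Xi_\ell$ cancels exactly; this is a clean way to restore the balancedness that is missing for $S$, at the modest cost of needing $\E[(\Mplus\Mminus)^{-\theta}]<\infty$, which your Cauchy--Schwarz reduction to the known negative moments of $M^S$ handles correctly (note the pairs $(\Mplus_v,\Mminus_v)$ are independent across $v$ even though $\Mplus_v$ and $\Mminus_v$ are not independent of each other, which is all the product Markov bound requires). Your closing alternative via \eqref{eq:gmc-unbalanced-decomposition} is essentially the route the authors sketch and then discard in a commented-out passage; be aware that its lower bound does not quite "reduce to Lemma \ref{lem:liouville-mgf-asymptotics-bounds}" as stated, since after restricting to $\{|X|\le 1\}$ you need a lower bound on $\E[e^{-Ct(\MAplus+\MAminus)}]$ with the two \emph{dependent} chaoses jointly small, which requires rerunning the event-$\cB_{\ell,u}$ argument for the pair rather than citing the lemma for $\MA$ alone.
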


\begin{proof}
The argument is similar to  the proof of Lemma \ref{lem:liouville-mgf-asymptotics-bounds}.
To see the upper bound in \eqref{eq:sinh-mgf-asymptotics-bounds}, it is enough to prove the upper bound in \eqref{eq:crude-small-ball-asymptotics} with $\Mplus+\Mminus$ replacing $\MA$. 
Using \eqref{eq:brw-gmc-recursion}, we fix $\ell \in \N$ be such that $s= r e^{-\frac{\gamma^2}{2}\ell}$ for some $r<1/2$, which is determined similarly to the proof of Lemma \ref{lem:liouville-mgf-asymptotics-bounds} (and in particular, 
is small enough, independent of $s$). Then,
\begin{equation}
\Mplus + \Mminus \stackrel{d}{=} \frac{e^{-\frac{\gamma^2}{2}\ell}}{d^\ell} \sum_{v\in \Gen{\ell}} (e^{\gamma S_v} \Mplus_v + e^{-\gamma S_v} \Mminus_v)
\geq \frac{e^{-\frac{\gamma^2}{2}\ell}}{d^\ell} \sum_{v\in \Gen{\ell}} ( \Mplus_v \mathbf{1}_{S_v\geq 0} + \Mminus_v \mathbf{1}_{S_v\leq 0}),
\end{equation}
where the pairs $(\Mplus_v, \Mminus_v)$ are independent and distributed as $(\Mplus,\Mminus)$, and independent of $(S_v)_{v\in \Gen{\ell}}$.
Since at least half of the $S_v$'s are either all positive or all negative, 
and the law of $\Mplus$ is identical to that of $\Mminus$,
we obtain for $\theta >0$
%(compare with \eqref{eq:upper-bound-small-ball-by-markov})
and some constant $c_\theta>0$
\begin{align}
\P(\Mplus + \Mminus \leq s) &\leq \P(\frac{1}{d^\ell} \sum_{i=1}^{d^\ell/2} M_{v_i}^S \leq  r) 
=  \P(\sum_{i=1}^{d^\ell/2} M_{v_i}^S \leq  r d^\ell ) 
\leq \E[ e^{- \theta \sum_{i=1}^{d^\ell/2}M_{v_i}^S } ]  e^{\theta d^\ell r}
\nnb
&\leq \E[e^{-\theta M^S}]^{d^\ell /2} e^{\theta r d^\ell}
\leq e^{-c_\theta' d^\ell/2} e^{\theta r d^\ell} 
\end{align}
where $(M_{v_i}^S)_{i=1, \ldots, d^\ell/2}$ are independent and distributed as $M^S$, and $c_\theta'>0$ since $\E M^S=1$.
Thus, the claimed upper bound follows after taking $r>0$ small enough.
For the lower bound, the argument is again similar to that used in the proof of the lower bound \eqref{eq:lower-bound-small-ball-good-event}, and we sketch the steps. Replace 
the event $\cA_{\ell,u}$ of \eqref{eq:good-event-lower-bound-small-ball} by

\begin{equation}
\label{eq:good-event-lower-bound-small-ball-bis}
\tilde \cA_{\ell,u} = \big\{ \forall i=0,\ldots, \ell-1 \colon |X_e| \leq \frac{u}{2^{\ell-i+1}} \text{ for all } e \in \Gen{i} \big\}.
\end{equation}
As in the proof of \eqref{eq:lower-bound-event-infty-norm}, we obtain that 
\begin{equation}
\label{eq:sinh-probability-good-event-lower-bound}
\P(\tilde \cA_{\ell, u})\geq e^{-\tilde C_u d^\ell}.
\end{equation}
for some constant $\tilde C_u>0$, which is independent of $\ell$.
Now, on the event $\tilde\cA_{\ell,u}$ we have that
\begin{equation}
\label{eq:sinh-good-event-lower-bound}
\Mplus + \Mminus \leq e^{\gamma u} \frac{1}{d^\ell} \sum_{v\in \Gen{\ell}} (\Mplus_v+\Mminus_v),
\end{equation}
where the pairs $(\Mplus_v,\Mminus_v)_{v\in \Gen{\ell}}$ are independent and identically distributed and independent of the event $\tilde \cA_{\ell, u}$.
Using that $\E[ \Mplus_v+\Mminus_v]=2$, and applying the law of large numbers,
we obtain the analogue of \eqref{eq:MLv-lower-bound-by-LLN} with $(\Mplus+\Mminus)/2$ replacing $M^A$.
Together with \eqref{eq:sinh-probability-good-event-lower-bound}, this completes the argument.
\end{proof}

Next, we cast again the exponential moment of $\frac{1}{2}(\Mplus + \Mminus)$ into an expectation over random variables
that satisfy a weak large deviations principle.
As before we let $t_k = \lambda \base^k$ for $\lambda >0$ and $\base$ as in \eqref{eq:base-sequence-tk}.
Then we have by \eqref{eq:brw-gmc-recursion}
\begin{equation}
\E[ e^{- \frac{\lambda \base^k}{2} (\Mplus + \Mminus) } ]
= \E[ e^{ -\frac{\lambda}{2} \sum_{v\in \Gen{k}} ( e^{\gamma S_v} \Mplus_v +  e^{-\gamma S_v} \Mminus_v  )}   ]
= \E[ e^{-d^k \tilde H_k } ] ,
\end{equation}
where now $(\Mplus_v, \Mminus_v)_{v\in \Gen{k}}$ is a collection of independent identically distributed random variables with $M_v^{\pm} \stackrel{d}{=} M^{\pm}$ and $\tilde H_k$ is defined by
\begin{equation}
\tilde H_k = -\frac{1}{d^\ldlv} \sum_{v\in \Gen{\ldlv}} \log \E[ e^{-\frac{\lambda}{2} ( e^{\gamma x } \Mplus +  e^{-\gamma x} \Mminus ) }  ]_{x=S_v} .
\end{equation}
Thus, the analogue to the real valued function $G$ from \eqref{eq:definition-G} is now $\tilde G \colon \R \to \R$ defined by
\begin{equation}
\label{eq:definition-G-sinh}
\tilde G (x) = -\log \E[e^{- \frac{\lambda}{2} (e^{\gamma x } \Mplus + e^{-\gamma x} \Mminus )} ].
\end{equation}

Then, analogously to Lemma \ref{lem:continuity-G}, we have the following estimates for the function $\tilde G$.  
\begin{lemma}
\label{lem:continuity-G-sinh}
Let $\tilde G$ be as in \eqref{eq:definition-G-sinh} and let $y\in B(x,\delta)$ for $x\in \R$ and $\delta>0$. Then
\begin{equation}
\label{eq:continuity-G-sinh}
\tilde G(x) e^{-\gamma \delta  } \leq \tilde G(y) \leq \tilde G(x)e^{\gamma \delta}
\end{equation}
\end{lemma}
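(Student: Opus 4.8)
\emph{Proof plan.} The plan is to follow the proof of Lemma~\ref{lem:continuity-G} almost verbatim, replacing the single place where the monotonicity of $G$ was used by a two-sided pointwise comparison adapted to the symmetric combination $e^{\gamma x}\Mplus + e^{-\gamma x}\Mminus$. As a preliminary remark, note that $\tilde G\geq 0$ because $\Mplus,\Mminus\geq 0$ forces $\E\big[e^{-\frac{\lambda}{2}(e^{\gamma x}\Mplus + e^{-\gamma x}\Mminus)}\big]\leq 1$, so the statement \eqref{eq:continuity-G-sinh} is consistent with $e^{-\gamma\delta}\leq 1\leq e^{\gamma\delta}$.

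The key point is that for $y\in B(x,\delta)$, writing $s=y-x$ with $|s|<\delta$, both factors $e^{\gamma s}$ and $e^{-\gamma s}$ lie in $[e^{-\gamma\delta},e^{\gamma\delta}]$, so that pointwise
\begin{equation}
e^{-\gamma\delta}\big(e^{\gamma x}\Mplus + e^{-\gamma x}\Mminus\big)\leq e^{\gamma y}\Mplus + e^{-\gamma y}\Mminus\leq e^{\gamma\delta}\big(e^{\gamma x}\Mplus + e^{-\gamma x}\Mminus\big).
\end{equation}
It is essential here that the same constants bound both summands, even though $e^{\gamma x}\Mplus$ and $e^{-\gamma x}\Mminus$ are rescaled in opposite directions when $x$ is perturbed; this is what substitutes for the monotonicity of $G$ used in Lemma~\ref{lem:continuity-G}, which has no counterpart here since $\tilde G$ is symmetric (in particular not monotone).

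From the upper pointwise bound I would deduce $e^{-\frac{\lambda}{2}(e^{\gamma y}\Mplus + e^{-\gamma y}\Mminus)}\geq\big(e^{-\frac{\lambda}{2}(e^{\gamma x}\Mplus + e^{-\gamma x}\Mminus)}\big)^{e^{\gamma\delta}}$, take expectations, and apply Jensen's inequality to the convex map $r\mapsto r^{e^{\gamma\delta}}$ (using $e^{\gamma\delta}\geq 1$); taking $-\log$ then yields $\tilde G(y)\leq e^{\gamma\delta}\tilde G(x)$. Symmetrically, the lower pointwise bound gives $e^{-\frac{\lambda}{2}(e^{\gamma y}\Mplus + e^{-\gamma y}\Mminus)}\leq\big(e^{-\frac{\lambda}{2}(e^{\gamma x}\Mplus + e^{-\gamma x}\Mminus)}\big)^{e^{-\gamma\delta}}$, and Jensen's inequality for the concave map $r\mapsto r^{e^{-\gamma\delta}}$, followed by $-\log$, yields $\tilde G(y)\geq e^{-\gamma\delta}\tilde G(x)$, which proves \eqref{eq:continuity-G-sinh}. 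No step is a genuine obstacle; the only care needed --- exactly as in Lemma~\ref{lem:continuity-G} --- is to track the directions of the inequalities when raising to a power $<1$ and when composing with the decreasing function $-\log$.
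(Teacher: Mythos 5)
Your proof is correct and follows essentially the same route as the paper: the paper's first chain of inequalities is exactly your two-sided pointwise bound (written via $e^{\gamma(x-\delta)}\Mplus+e^{-\gamma(x+\delta)}\Mminus\geq e^{-\gamma\delta}(e^{\gamma x}\Mplus+e^{-\gamma x}\Mminus)$ and its counterpart), followed by the same Jensen arguments for $r\mapsto r^{e^{\pm\gamma\delta}}$ and taking logarithms. All inequality directions in your argument check out.
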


\begin{proof}
Since $r\mapsto r^{e^{-\gamma \delta}}$ is concave, we have by Jensen's inequality and $e^{2\gamma \delta} \geq 1$ 
\begin{align}
\E[e^{- \frac{\lambda}{2} ( e^{\gamma y} \Mplus + e^{-\gamma y} \Mminus ) }]
&\leq
\E[e^{- \frac{\lambda}{2} ( e^{\gamma (x-\delta) } \Mplus + e^{-\gamma (x+\delta) } \Mminus )   }] \nnb
&\leq
\E[e^{- \frac{\lambda}{2} e^{-\gamma \delta} ( e^{\gamma x} \Mplus + e^{-\gamma x } \Mminus ) } ] 
 \leq \E[e^{-\frac{\lambda}{2} ( e^{\gamma x} \Mplus +  e^{-\gamma x } \Mminus ) }]^{e^{-\gamma \delta} }
\end{align}
and similarly
\begin{equation}
\E[ e^{-\frac{\lambda}{2} ( e^{\gamma y} \Mplus + e^{-\gamma y} \Mminus ) } ] 
\geq
\E[ e^{- \frac{\lambda}{2} ( e^{\gamma x} \Mplus + e^{-\gamma x} \Mminus ) } ]^{e^{\gamma \delta}}.
\end{equation}
Thus, taking the logarithm, the claim follows.
\end{proof}

\begin{proof}[Proof of Theorem \ref{thm:sinh-mgf-asymptotics}]
Following the same arguments as in the proof of Proposition \ref{prop:weak-ldp}, we have that the laws of $\tilde H_k$ satisfy a large deviations principle with rate function
\begin{equation}
\label{eq:rate-function-sinh}
\tilde I(x) = \sup_{A\in \cA \colon x \in A} - \liminf_{k\to \infty}  \frac{1}{d^k} \tilde f(k, A) ,
\end{equation}
where $\tilde f (k,A) = \log \P(\tilde H_k \in A)$.
Indeed, the arguments leading to the subadditivity only use the continuity of $G$, which holds for $\tilde G$ by Lemma \ref{lem:continuity-G-sinh}.
In particular, the fact that $A$ is balanced is not used in the proof.
Thus, the there is a function $\tilde h \colon (0,\infty) \to \R$ such that the convergence \eqref{eq:sinh-mgf-asymptotics} holds.
\end{proof}

\section{Decay of the $L^1$ norm and decay of correlations}

In this section we prove the main results for the measure $\PLv{t_k}$,
which are the decay of the $L^1$ norm in Theorem \ref{thm:pt-liouville-decay-l1} and the weak decay of correlations in Corollary \ref{cor:decay-correlation-tree}.
The idea is to use 
the expression \eqref{eq:pt-liouville-definition} and bound separately the numerator (with the event $B$ being that the $L^1$ norm of $(A_v)_{v\in D_{k-a}}$ is not small) and the denominator. The bound on the numerator uses the strict convexity of the functions
\begin{equation}
\label{eq:hk-definition}
h_k(\lambda) = -\frac{1}{d^k} \log\E[e^{-\lambda \base^k \MA}] ,
\end{equation}
where $\base$ is as in \eqref{eq:base-sequence-tk},
which unfortunately we prove only for specific $\lambda$.  From the (quantitative) strict convexity we then get a quantitative control of the numerator in \eqref{eq:pt-liouville-definition}. The details of the latter are in Section \ref{subsec-dec}.

Toward the convexity analysis alluded to above, we first prove the following variational property of the function $h$ from Theorem \ref{thm:liouville-mgf-asymptotics}.
\begin{lemma}
Let $h$ be as in Theorem \ref{thm:liouville-mgf-asymptotics} and let $\lambda>0$. Then
\begin{equation}
\label{eq:variational-formula-limit-mgf}
h(\lambda) = \min_{y \in \R^d: \sum_{i=1}^d  y_i=0} \frac{\sum_{i=1}^d h(\lambda e^{\gamma y_i} )  }{d}.
\end{equation}
\end{lemma}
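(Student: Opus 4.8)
The plan is to unfold the recursion \eqref{eq:balanced-brw-gmc-recursion} at the first level and then extract \eqref{eq:variational-formula-limit-mgf} by a Laplace-type asymptotic argument, using the uniform-on-compacts convergence $h_k\to h$ from Theorem \ref{thm:liouville-mgf-asymptotics} together with the a priori bounds of Lemma \ref{lem:liouville-mgf-asymptotics-bounds}. \emph{Step 1 (reduction to a one-step problem).} Recall $h_k(\lambda)=-d^{-k}\log\E[e^{-\lambda\base^k\MA}]$, and note from \eqref{eq:variance-bal-BRW} that the level-one increments $(A_v)_{v\in\Gen{1}}$ are centred jointly Gaussian, supported on the hyperplane $\{\sum_{v}a_v=0\}$; for $d=2$ this means $A_{v_1}=-A_{v_2}$ with $A_{v_1}=Y$ a standard Gaussian. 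Applying \eqref{eq:balanced-brw-gmc-recursion} with $k=1$, using $\lambda\base^k d^{-1}e^{\gamma A_v-\gamma^2/2}=\lambda e^{\gamma A_v}\base^{k-1}$ (since $\base=de^{\gamma^2/2}$), then conditioning on $(A_v)_{v\in\Gen{1}}$ and integrating out the independent copies $(\MA_v)_{v\in\Gen{1}}$, I would obtain
\[
\E[e^{-\lambda\base^k\MA}]=\E\Big[\exp\Big(-d^{k-1}\sum_{v\in\Gen{1}}h_{k-1}(\lambda e^{\gamma A_v})\Big)\Big],
\]
and hence, for $d=2$, $h_k(\lambda)=\tfrac12\big(-d^{-(k-1)}\log\E_Y[e^{-d^{k-1}\Phi_k(Y)}]\big)$ with $\Phi_k(y)=h_{k-1}(\lambda e^{\gamma y})+h_{k-1}(\lambda e^{-\gamma y})$, an even function of $y$.

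\emph{Step 2 (Laplace asymptotics).} Set $\Phi(y)=h(\lambda e^{\gamma y})+h(\lambda e^{-\gamma y})$. Since $h$ is continuous and positive and $h(\base\mu)=d\,h(\mu)$ forces $h(\mu)\to\infty$ as $\mu\to\infty$, the function $\Phi$ is continuous, even and coercive, so $m:=\min_{y\in\R}\Phi(y)$ is attained, say at $y^*$. I claim $-d^{-(k-1)}\log\E_Y[e^{-d^{k-1}\Phi_k(Y)}]\to m$. For the upper bound, restricting the integral to a fixed ball $B(y_0,\delta)$ gives $-d^{-(k-1)}\log\E_Y[\cdot]\le\sup_{B(y_0,\delta)}\Phi_k-d^{-(k-1)}\log\P(Y\in B(y_0,\delta))$; since $\Phi_k\to\Phi$ uniformly on $B(y_0,\delta)$ (because $h_{k-1}\to h$ uniformly on compacts and $y\mapsto\lambda e^{\pm\gamma y}$ maps compacts to compacts) and $\P(Y\in B(y_0,\delta))$ is a fixed positive constant, letting $k\to\infty$, then $\delta\to0$, then optimizing over $y_0$ yields $\limsup_k\le m$. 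For the lower bound, fix $M$ with $y^*\in[-M,M]$: the contribution of $\{|Y|\le M\}$ is at most a polynomial in $M$ times $e^{-d^{k-1}\min_{[-M,M]}\Phi_k}$, and $\min_{[-M,M]}\Phi_k\to\min_{[-M,M]}\Phi=m$; the contribution of $\{|Y|>M\}$ is controlled via Lemma \ref{lem:liouville-mgf-asymptotics-bounds} together with the identity $\base^\alpha=d$ (which follows from \eqref{eq:base-sequence-tk} and \eqref{eq:exponents-first-order}), which for $k$ large gives $\Phi_k(y)\ge c\lambda^\alpha e^{\gamma\alpha|y|}\ge c\lambda^\alpha e^{\gamma\alpha M}$ on $\{|y|>M\}$, so this part is at most $e^{-d^{k-1}c\lambda^\alpha e^{\gamma\alpha M}}$; choosing $M$ with $c\lambda^\alpha e^{\gamma\alpha M}>m$ makes it negligible against the main term. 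Hence $\liminf_k\ge m$.

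\emph{Step 3 (conclusion).} Steps 1 and 2 give $h_k(\lambda)\to\tfrac12 m$, and by evenness of $\Phi$ this equals $\min_{y\ge0}\tfrac12\big(h(\lambda e^{\gamma y})+h(\lambda e^{-\gamma y})\big)$; on the other hand $h_k(\lambda)\to h(\lambda)$ by Theorem \ref{thm:liouville-mgf-asymptotics}, which is \eqref{eq:variational-formula-limit-mgf}. (For general $d$ the same argument gives $h(\lambda)=\tfrac1d\min\{\sum_{v\in\Gen{1}}h(\lambda e^{\gamma a_v}):\sum_{v\in\Gen{1}}a_v=0\}$, of which \eqref{eq:variational-formula-limit-mgf} is the instance $d=2$.) The main obstacle I expect is precisely the lower bound in Step 2: because $h_k\to h$ only uniformly on compacts, one must rule out an escape of mass to the region where $\lambda e^{\pm\gamma Y}$ leaves every fixed compact, and it is the a priori lower bound of Lemma \ref{lem:liouville-mgf-asymptotics-bounds} that does this — there $\Phi_k(y)$ is of order $e^{\gamma\alpha|y|}$, so $e^{-d^{k-1}\Phi_k(y)}$ is crushed far below the scale $e^{-d^{k-1}m}$ of the main term, regardless of the comparatively harmless Gaussian smallness of that region.
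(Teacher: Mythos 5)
Your proof is correct and follows essentially the same route as the paper: unfold the recursion \eqref{eq:balanced-brw-gmc-recursion} one level (for $d=2$ the sibling increments are $\pm Y$ with $Y$ standard Gaussian), rewrite $h_k(\lambda)$ as $-d^{-k}\log$ of a Gaussian integral of $e^{-d^{k-1}\Phi_k}$, and localize near the minimizer of $\Phi$ using the uniform-on-compacts convergence $h_{k-1}\to h$ and the continuity and coercivity of $\Phi$. The only difference is that the lower bound in your Step 2 (the tail control on $\{|Y|>M\}$ via Lemma \ref{lem:liouville-mgf-asymptotics-bounds}) is superfluous, since the inequality $\min_{y\ge 0}\tfrac{1}{2}\big(h(\lambda e^{\gamma y})+h(\lambda e^{-\gamma y})\big)\le h(\lambda)$ is immediate by taking $y=0$; the paper accordingly proves only the Laplace upper bound, exactly as in your argument.
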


\begin{proof}
Let $\bar h$ denote the right hand side of \eqref{eq:variational-formula-limit-mgf}.
Clearly, we have $\bar h (\lambda) \leq h(\lambda)$. 
Using \eqref{eq:balanced-brw-gmc-recursion} with $n=1$
and denoting the increment in the first level as $X=(X_i)_{i=1}^d$,
so that $\sum_{i=1}^d X_i=0$, we have
\begin{align}
h_k(\lambda) &=
 - \frac{1}{d^k} \log \E[e^{-\lambda p^{k-1}\sum_{i=1}^d e^{\gamma X_i} \hMA_i}] \nnb
&= - \frac{1}{d^k} \log \E_X \big[ \prod_{i=1}^d \E [ e^{ - \lambda p^{k-1} e^{\gamma X_i} \MA } \mid X_i ]   \big] \nnb
&=- \frac{1}{d^k} \log \E_X \big[ e^{-d^{k-1}\sum_{i=1}^d h_{k-1}(\lambda e^{\gamma X_i})}\big],
\end{align}
where we introduced $\tMA_i$, $i=1,\ldots,d$ identically distributed as $\MA$ and  independent of each other and of  $X$.
By Theorem \ref{thm:liouville-mgf-asymptotics} the convergence $h_k \to h$ is uniform on compact sets.
Thus, for every compact set $K \subseteq \R$, we have
\begin{align}
\label{eq:hk-upper-bound-compact}
h_k(\lambda) \leq - \frac{1}{d^k} \log \E_X \big[ {\bf 1}_{\{X_i\in K, i=1,\ldots,d\}} e^{-d^k\left(\frac{\sum_{i=1}^d h(\lambda e^{\gamma X_i})}{d}+o_k(1)\right)}\big],
\end{align}
where the term $o_k(1)$ depends on the set $K$ and vanishes as $k\to\infty$.
Let now $y^* = y^*(\lambda)\in \R^d$ with $\sum_{i=1}^d y^*_i=0$ be such that
\begin{equation}
\bar h(\lambda) = \frac{\sum_{i=1}^dh(\lambda e^{\gamma y^*_i})}{d}.
%\bar h(\lambda) = \frac{h(\lambda e^{\gamma y_0}) + h(\lambda e^{-\gamma y_0} )}{2}. 
\end{equation}
Note that $y^*$ exists, since $h$ is continuous and $\sum_{i=1}^d h(\lambda e^{\gamma y_i}) \to \infty$ for $\|y\|_\infty\to \infty$ satisfying the constraint $\sum y_i = 0$.
Let $\epsilon>0$ and let $K \subseteq \R$ be a compact set such that $y_0 \in K^\circ$,
where $K^\circ$ denotes the interior of the set $K$.
Since $h$ is continuous,
there is $\delta>0$ such that if $|y-y^*| \leq \delta$ then
\begin{equation}
\Big| \frac{\sum_{i=1}^d h(\lambda e^{\gamma y^*_i} )}{d} - \frac{\sum_{i=1}^d h(\lambda e^{\gamma y_i} )}{d} \Big| \leq \epsilon.
\end{equation}
%Then we have from \eqref{eq:hk-upper-bound-compact} for $\delta > 0$ such that $B(y_0, \delta) = (y_0-\delta, y_0 +\delta) \subseteq K$
Then we have from \eqref{eq:hk-upper-bound-compact}, with $\delta > 0$ such that $\{y: \|y-y^*\|_\infty <\delta\}\subset K$
%B(y_0, \delta) = (y_0-\delta, y_0 +\delta) \subseteq K$
\begin{align}
h_k(\lambda) &\leq
- \frac{1}{d^k} \log \E_X \big[ {\bf 1}_{\{X_i\in K, i=1,\ldots,d\}} e^{-d^k\left(\frac{\sum_{i=1}^d h(\lambda e^{\gamma X_i})}{d}+o_k(1)\right)}\big] \nnb
&\leq 
- \frac{1}{d^k} \log \E_X \big[ {\bf 1}_{\{\|X-y^*\|_\infty<\delta\}}
 e^{-d^k\left(\frac{\sum_{i=1}^d h(\lambda e^{\gamma X_i})}{d}+o_k(1)\right)}\big] \nnb
&\leq \frac{\sum_{i=1}^d h(\lambda e^{\gamma y^*_i})}{d}+\epsilon-\frac{1}{d^k} 
\log \P \big( \|X-y^*\|_\infty<\delta)+o_k(1).
%\in B(y_0, \delta) \big) 
\end{align}
Sending $k\to \infty$, we arrive at
\begin{equation}
h(\lambda) \leq \bar h(\lambda ) + \epsilon=\min_{y\in \R^d: \sum_{i=1}^d y_i=0}\frac{\sum_{i=1}^d h(\lambda e^{\gamma y_i})}{d}+\epsilon .
\end{equation}
Since $\epsilon>0$ was arbitrary, the claimed equality \eqref{eq:variational-formula-limit-mgf} follows.
\end{proof}

\begin{remark}
The same proof works with $\lambda e^{\gamma x}$ for $x \in \R$ in place of $\lambda$. Thus, we also have 
\begin{equation}
\label{eq:h-exp-variational}
h(\lambda e^{\gamma x}) = \min_{y\in \R^d:\sum_{i=1}^d y_i=0} \frac{\sum_{i=1}^d h(\lambda e^{\gamma (x+y_i)})}{d}.
% + h(\lambda e^{\gamma (x-y)})}{2}.
\end{equation}
\end{remark}

In what follows, we consider the function $f_\lambda \colon \R \to \R$ defined in \eqref{eq:f-tau-definition-2},
which we recall is given by
\begin{equation}
\label{eq:f-tau-definition-2a}
f_\lambda(x) = h(\lambda e^{\gamma x} ) ,
\end{equation}
where $h \colon \R \to \R$ is as in Theorem \ref{thm:liouville-mgf-asymptotics}.
We first show the function $f_\lambda$ is convex.
\begin{lemma}
\label{lem:f-variational-convex}
Let $\lambda > 0$ and let $f_\lambda$ be defined by \eqref{eq:f-tau-definition-2a}. Then $f_\lambda \colon \R \to \R$ is convex. 
\end{lemma}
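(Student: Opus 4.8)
The plan is to deduce the convexity of $f_\lambda$ directly from the variational identity \eqref{eq:h-exp-variational}, which in the notation of \eqref{eq:f-tau-definition-2a} reads
\begin{equation}
\label{eq:f-variational-restated}
f_\lambda(x) = \min_{y\geq 0} \frac{f_\lambda(x+y) + f_\lambda(x-y)}{2} \qquad \text{for all } x\in\R.
\end{equation}
Observe first that the inequality $f_\lambda(x) \leq \frac{1}{2}(f_\lambda(x+y)+f_\lambda(x-y))$ for \emph{all} $y\geq 0$ (equivalently all $y\in\R$, by symmetry of the right-hand side in $y$) is precisely the statement that $f_\lambda$ is midpoint convex: for any $u,v\in\R$, set $x=(u+v)/2$ and $y=(u-v)/2$ to get $f_\lambda\big(\tfrac{u+v}{2}\big)\leq \tfrac12(f_\lambda(u)+f_\lambda(v))$. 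Since $f_\lambda$ is continuous (being the composition of the continuous function $h$ from Theorem \ref{thm:liouville-mgf-asymptotics} with $x\mapsto \lambda e^{\gamma x}$), midpoint convexity upgrades to full convexity by the classical Sierpiński-type argument.

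So the real content is extracting midpoint convexity from \eqref{eq:f-variational-restated}. The cleanest route I would take is to prove \eqref{eq:f-variational-restated} directly at the prelimit level and then pass to the limit, rather than to reuse the displayed identity as a black box (though one may also simply invoke \eqref{eq:h-exp-variational} and read off the inequality). Concretely, for $h_k(\lambda) = -d^{-k}\log\E[e^{-\lambda\base^k\MA}]$ write $f_{k,\lambda}(x) = h_k(\lambda e^{\gamma x})$. Expanding $\MA$ one level via \eqref{eq:balanced-brw-gmc-recursion} with $n=1$ and conditioning on the increment $X$ (as in the proof of the previous lemma), one obtains
\begin{equation}
\label{eq:fk-integral-rep}
f_{k,\lambda}(x) = -\frac{1}{d^k}\log\Big(\frac{1}{\sqrt{2\pi}}\int_\R e^{-y^2/2}\, e^{-d^{k-1}\big(f_{k-1,\lambda}(x+\gamma y) + f_{k-1,\lambda}(x-\gamma y)\big)}\,dy\Big).
\end{equation}
From \eqref{eq:fk-integral-rep}, bounding the integral below by restricting to a small ball around any fixed $y_0$ and using continuity of $h$ (hence of $f_{k-1,\lambda}$, with the uniform convergence $h_k\to h$ on compacts from Theorem \ref{thm:liouville-mgf-asymptotics}) gives, after sending $k\to\infty$, the inequality $f_\lambda(x)\leq \frac12\big(f_\lambda(x+\gamma y_0)+f_\lambda(x-\gamma y_0)\big)$ for every $y_0\in\R$; this is exactly midpoint convexity of $f_\lambda$ (the factor $\gamma$ is harmless since $y_0$ ranges over all of $\R$). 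Together with continuity this yields convexity.

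I expect the main obstacle to be purely bookkeeping: making the Laplace-type lower bound on the integral in \eqref{eq:fk-integral-rep} uniform enough to pass to the limit, i.e.\ controlling the $o_k(1)$ error terms on a fixed compact neighbourhood of $y_0$ exactly as in the proof of the preceding lemma. There is no genuine difficulty beyond that — indeed, since the identity \eqref{eq:h-exp-variational} has already been established in the excerpt, the entire lemma reduces to the one-line observation that \eqref{eq:f-variational-restated} is midpoint convexity in disguise, plus the standard continuity-plus-midpoint-convexity implies convexity lemma. If one prefers, the whole argument can be written in three sentences citing \eqref{eq:h-exp-variational} directly.
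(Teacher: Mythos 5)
Your proposal is correct and is essentially the paper's own proof: the authors likewise observe that the variational identity \eqref{eq:h-exp-variational} says exactly that $f_\lambda$ is midpoint (Jensen) convex, and then upgrade to full convexity using continuity of $h$. The extra prelimit derivation you sketch is unnecessary, as you yourself note, since the identity has already been established.
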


\begin{proof}
We first check that
the function $f_\lambda$ is Jensen-convex (or midconvex, in the terminology of \cite{MR442824}), that is
$$f_\lambda(\frac{x+y}{2})\leq \frac{f_\lambda(x)+f_\lambda(y)}{2}.$$
Indeed, set $\bar x=(x+y)/2$, $\Delta=(x-y)/2$, and $\tilde \lambda=\lambda e^{\gamma \bar x}$. Then, 
\begin{align}
\frac{f_\lambda(x)+f_\lambda(y)}{2}=\frac{f_{\tilde \lambda}(\Delta)+f_{\tilde \lambda}(-\Delta)}{2}
\geq \min_{y\in \R^d: \sum_{i=1}^d y_i=0} \frac{\sum_{i=1}^d f_{\tilde \lambda}(y_i)}{d}=f_{\tilde \lambda}(0)=
f_\lambda(\bar x),
\end{align}
where the inequality uses \eqref{eq:h-exp-variational} with $\lambda$ replaced by $\tilde \lambda$.
Further, $f_\lambda$ is also continuous, and therefore easily shown to be convex; see for instance \cite[p.\ 220]{MR442824}.
\end{proof}

In the proof of Theorem \ref{thm:pt-liouville-decay-l1} we need to establish a lower bound on
\begin{equation}
\label{eq:sum-mgf-to-lower-bound}
\frac{1}{d^{k-a}} \sum_{v \in \Gen{k-a} } \Lambda( \lambda \base^a e^{\gamma A_v} ),
\end{equation}
where $a>0$ is an integer,
in terms of the same quantity with $A_v = 0$ for all $v\in \Gen{k-a}$.
It is tempting to use the convergence
\begin{equation}
\label{eq:f-tau-from-mgf}
\lim_{a \to \infty} \frac{1}{d^a} \Lambda (\lambda \base^a e^{\gamma x} ) = h(\lambda e^{\gamma x}) = f_\lambda(x)
\end{equation}
to replace $\Lambda(\lambda \base^a e^{\gamma A_v})$ by $d^a f_\lambda(A_v)$
and then use $\sum_v A_v = 0$ together with the strict convexity of $f_\lambda$ at $0$.
However, the limit \eqref{eq:f-tau-from-mgf} is not uniform in the value of $(A_v)_{v\in \Gen{k-a}}$
and thus, a more careful treatment is necessary.

In what follows we write, again with $a\in \N$,
\begin{equation}
g_a(A_v) = \Lambda( \lambda \base^a e^{\gamma A_v} ).
\end{equation}
By Lemma \ref{lem:liouville-mgf-asymptotics-bounds}, 
there are constants $\Clb>0$ and $\clb>0$ such that if $A_v \geq \Clb$ then
\begin{equation}
\label{eq:mgf-exp-lower-bound}
g_a(A_v) \geq d^a \clb e^{ \gamma \alpha A_v },
\end{equation}
where we used that $\base^{\alpha} = d$.

As already alluded earlier the convergence in \eqref{eq:f-tau-from-mgf} may not be uniform in $x\in \R$.
However, since the functions $\frac{1}{d^a} g_a$ are uniformly Lipschitz
on compacts by Lemma \ref{lem:continuity-G},
the convergence is uniform for all $x$ in a compact set.
Thus, for any $\delta > 0$ and any $\Ccomp>0$ there is $a_0=a_0(\delta, \Ccomp)$ large enough such that for all $a\geq a_0$
\begin{equation}
\big| \frac{1}{d^a} g_a - f_\lambda \big| \leq \delta
\end{equation}
uniformly on $[-\Ccomp, \Ccomp]$.

Let $\mu^{A,k-a}\equiv \muA$ be the empirical measure of $(A_v)_{v\in \Gen{k-a}}$,
i.e.\ $\muA$  is a random variable with values in $\cM_1(\R)$, the space of 
Borel probability measures on $\R$,
such that for any Borel function $\psi\colon \R \to \R$, 
\begin{equation}
\int_\R \psi d \muA = \frac{1}{d^{k-a}} \sum_{ v\in \Gen{k-a} } \psi(A_v).
\end{equation}
Note that with this notation we have
\begin{equation}
\lpnorm{A}{k-a}{1} = \int_\R |x| d \muA
\end{equation}
and moreover \eqref{eq:sum-mgf-to-lower-bound} reads
\begin{equation}
\frac{1}{d^{k-a}} \sum_{v\in \Gen{k-a}} \Lambda ( \lambda \base^a e^{\gamma A_v} ) = \int_\R g_a d \muA.
\end{equation}

We now state and prove the main technical result of this section.
\begin{proposition}
\label{prop:lower-bound-sum-mgf-strict-convexity}
Let $\epsilon > 0$ and assume
\begin{equation}
\label{eq:l1-norm-larger-epsilon}
\int |x| d\muA > \epsilon.
\end{equation}
Moreover, let $\lambda>0$ be such that $f_\lambda$ is strictly convex at $0$.
Then there is $a_0 \in \N$ large enough and $\delta > 0$ (both depending on $\epsilon$ only) such that for all $a\geq a_0$
\begin{equation}
\label{eq:lower-bound-sum-mgf-strict-convexity}
\frac{1}{d^a} \int_\R g_a d \muA \geq f_\lambda(0) + \delta .
\end{equation}
\end{proposition}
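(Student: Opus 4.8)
The plan is to argue by contradiction and compactness, converting the non-uniformity of the limit \eqref{eq:f-tau-from-mgf} into a Jensen inequality for a limiting probability measure. Write $F_a=\frac{1}{d^a}g_a$, and recall that $F_a\ge 0$, that $F_a\to f_\lambda$ uniformly on compact subsets of $\R$ (by Theorem \ref{thm:liouville-mgf-asymptotics} together with the Lipschitz bound of Lemma \ref{lem:continuity-G}), and that $F_a(x)\ge \clb e^{\gamma\alpha x}$ for $x\ge \Clb$ by \eqref{eq:mgf-exp-lower-bound}. Suppose the statement fails. Then for every $n\in\N$ there are $a_n\ge n$ (so $a_n\to\infty$) and a probability measure $\mu_n$ on $\R$ arising as an empirical measure $\muA$ at some generation, hence with $\int x\,d\mu_n=0$ (the balanced branching random walk sums to zero over each generation), such that $\int|x|\,d\mu_n>\epsilon$ and $\int F_{a_n}\,d\mu_n<f_\lambda(0)+\frac1n$.

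First I would establish tightness. From $F_{a_n}\ge 0$ and $F_{a_n}(x)\ge \clb e^{\gamma\alpha x}$ on $\{x\ge\Clb\}$ one gets, for all $T\ge\Clb$, that $\clb e^{\gamma\alpha T}\mu_n(\{x\ge T\})\le\int F_{a_n}\,d\mu_n<f_\lambda(0)+1$, i.e.\ a uniform exponential upper tail; integrating this bound gives $\sup_n\int x_+\,d\mu_n<\infty$, and by the mean-zero property $\sup_n\int x_-\,d\mu_n=\sup_n\int x_+\,d\mu_n<\infty$, so $(\mu_n)$ is tight. Passing to a subsequence, $\mu_n\Rightarrow\mu$ for a probability measure $\mu$. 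The exponential tail makes $x\mapsto x_+$ uniformly integrable, so $\int x_+\,d\mu_n\to\int x_+\,d\mu<\infty$; together with the semicontinuity bound $\int x_-\,d\mu\le\liminf_n\int x_-\,d\mu_n=\int x_+\,d\mu$ this shows $\int x\,d\mu$ is well defined and $\ge 0$, and moreover $\int|x|\,d\mu=2\int x_+\,d\mu=\lim_n\int|x|\,d\mu_n\ge\epsilon$.

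To conclude, I would pass to the limit in the energy. For all but countably many $R>0$ the set $[-R,R]$ is a $\mu$-continuity set, and there $F_{a_n}\to f_\lambda$ uniformly with $f_\lambda$ continuous and bounded, so $\int_{[-R,R]}F_{a_n}\,d\mu_n\to\int_{[-R,R]}f_\lambda\,d\mu$; since $F_{a_n}\ge 0$, letting $R\to\infty$ gives $f_\lambda(0)\ge\liminf_n\int F_{a_n}\,d\mu_n\ge\int f_\lambda\,d\mu$. By convexity of $f_\lambda$ (Lemma \ref{lem:f-variational-convex}) and Jensen's inequality, $\int f_\lambda\,d\mu\ge f_\lambda\!\big(\int x\,d\mu\big)$, and since $f_\lambda$ is increasing with $\int x\,d\mu\ge 0$ this forces $\int x\,d\mu=0$ and equality $\int f_\lambda\,d\mu=f_\lambda(0)$ in Jensen. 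Comparing $f_\lambda$ to its supporting line at $0$, equality means the graph of $f_\lambda$ is linear on a closed interval supporting $\mu$; strict convexity of $f_\lambda$ at $0$ prevents that interval from containing $0$ in its interior, and a mean-zero probability measure supported on an interval with $0$ as an endpoint, or on $\{0\}$, must be $\delta_0$ — contradicting $\int|x|\,d\mu\ge\epsilon>0$. The delicate point, and the reason the naive substitution of $d^a f_\lambda(A_v)$ for $g_a(A_v)$ fails, is exactly that $F_a\to f_\lambda$ only locally uniformly: escape of mass of $\muA$ to $+\infty$ is excluded by the exponential lower bound \eqref{eq:mgf-exp-lower-bound} on $g_a$, while escape to $-\infty$ is controlled only indirectly through the mean-zero constraint, and making this uniform-integrability bookkeeping at $-\infty$ airtight is the main technical obstacle. (A more hands-on route, closer to the excerpt, would split $\R$ into a large window and its complement, bound $F_a$ below by $\ell-o_a(1)$ on the window and by $\ell$ outside it via $F_a\ge 0$ and \eqref{eq:mgf-exp-lower-bound}, with $\ell$ a supporting line of $f_\lambda$ at $0$, and then use $\int x\,d\muA=0$, Markov's inequality and strict convexity of $f_\lambda$ at $0$ to extract the gap $\delta$; this needs care in relating the scale of strict convexity to $\epsilon$.)
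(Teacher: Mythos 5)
Your compactness-and-contradiction argument is sound in substance but genuinely different from the paper's proof. The paper proceeds quantitatively: it decomposes $\muA$ into $\muA_{-,\Ccomp}+\muA_{0,\Ccomp}+\muA_{+,\Ccomp}$, disposes of the events where the $L^1$ norm or the exponential moment of the positive tail is large via \eqref{eq:mgf-exp-lower-bound} and Jensen, and then extracts the gap explicitly as $\delta=\Delta_{s_\epsilon}/4$ with $s_\epsilon=\epsilon/8$, using the supporting-line improvement \eqref{eq:subdifferential-improved-lower-bound} together with $\frac{1}{d^{k-a}}\sum_{A_v\ge 0}A_v=\frac12\lpnorm{A}{k-a}{1}$. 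Your route trades this explicit bookkeeping for a soft argument (tightness from the uniform exponential upper tail, lower semicontinuity of the energy, equality in Jensen, one-sided strict convexity forcing $\mu=\delta_0$); it is shorter and cleanly isolates why the two escape routes to $\pm\infty$ are blocked, but it yields no quantitative control on $\delta$ or $a_0$ beyond existence, which is all the statement requires.

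Two steps need repair. First, the claim $\int|x|\,d\mu=2\int x_+\,d\mu=\lim_n\int|x|\,d\mu_n\ge\epsilon$ is not valid at the point where you assert it: mass escaping to $-\infty$ can carry negative first moment, so a priori only $\int|x|\,d\mu\le 2\int x_+\,d\mu$ holds. Either defer this identity until after $\int x\,d\mu=0$ is established, or simply finish with $\int x_+\,d\mu=\lim_n\int x_+\,d\mu_n=\lim_n\frac12\int|x|\,d\mu_n\ge\epsilon/2>0$, which already contradicts $\mu=\delta_0$ and rests only on the uniform integrability of $x_+$. Second, "since $f_\lambda$ is increasing, $f_\lambda(\int x\,d\mu)=f_\lambda(0)$ forces $\int x\,d\mu=0$" needs strict monotonicity on $[0,\int x\,d\mu]$, which is not immediate from $G$ being nondecreasing. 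It does follow: if $f_\lambda$ were constant on $[0,m]$ with $m>0$, convexity plus monotonicity would make it constant on $(-\infty,m]$, contradicting $f_\lambda(y)=h(\lambda e^{\gamma y})\le C\lambda^\alpha e^{\gamma\alpha y}\to 0$ as $y\to-\infty$ while $f_\lambda(0)=h(\lambda)\ge c\lambda^\alpha>0$ by Lemma \ref{lem:liouville-mgf-asymptotics-bounds}. With these two patches, and noting that the paper's notion of strict convexity at $0$ is one-sided (the either/or in \eqref{eq:lower-bound-subdifferential-positive}--\eqref{eq:lower-bound-subdifferential-negative}) so that the affine interval from equality in Jensen has $0$ as an endpoint rather than being excluded outright — a point your write-up already handles correctly — the argument closes.
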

The proof of Proposition \ref{prop:lower-bound-sum-mgf-strict-convexity} is rather technical. The heart of the proof is based on the simple observation that if $\{A_v\}$ is a collection of random variables such that $\sum A_v=0$ and the average of $|A_v|$ is strictly positive
then the average of $e^{\theta A_v}$ for any $\theta \neq 0$ is strictly larger than $1$, and similarly the average of $g_a(A_v)=f(e^{\theta A_v})$ for $f$ increasing which is strictly convex at $1$, is larger than $f(1)$. We need however a quantitative version of this implication, which quantifies the gap in terms of the average of $|A_v|$. This would be easy if there existed a uniform upper bound on the $|A_v|$. In our setup, such an upper bound does not 
exist, and we have to be able to also control the gap in situations where most variables satisfy that $|A_v|$ is small and $A_v<0$, but there is a small number of $v$s for which $A_v$ is very large. The proof proceeds by introducing appropriate truncations (the events $\cD_C$ in \eqref{eq:mu-A-bounded-first-moment}, $\cE_K$ in \eqref{eq:event-exponential-right-tail-av-small}), and then we are able to control the bad event described above. We now give the details. 
\begin{proof}
We decompose $\muA$ into three parts according to
\begin{equation}
\label{eq:empirical-measure-decomposition}
\muA = \muA_{-, \Ccomp} + \muA_{0, \Ccomp} + \muA_{+, \Ccomp},
\end{equation}
where
\begin{align}
\supp(\muA_{-, \Ccomp}) \subseteq (-\infty, -\Ccomp), \qquad
\supp(\muA_{0, \Ccomp}) \subseteq [-\Ccomp, \Ccomp], \qquad
\supp(\muA_{+, \Ccomp}) \subseteq (\Ccomp, \infty).
\end{align}
Then the left hand side of \eqref{eq:lower-bound-sum-mgf-strict-convexity} decomposes into
\begin{equation}
\int_\R g_a \muA = \int_\R g_a d \muA_{-, \Ccomp} + \int_\R g_a d \muA_{0, \Ccomp} +   \int_\R g_a d \muA_{+, \Ccomp} .
\end{equation}
In what follows we use the notation
\begin{equation}
\mean_{\pm, \Ccomp} = \int_\R x d \muA_{\pm, \Ccomp} \qquad \text{and} \qquad \mean_{0, \Ccomp} = \int_\R x d \muA_{0, \Ccomp} 
\end{equation}
for the mean with respect to the measures $\muA_{\pm, K}$ and $\muA_{0,K}$. 
Since $\sum_{v \in \Gen{k-a}} A_v=0$, we have that
\begin{equation}
\label{eq:empirical-measure-sum-of-means}
\mean_{-, \Ccomp} + \mean_{0, \Ccomp} + \mean_{+, \Ccomp} = 0.
\end{equation}
Moreover, since $\muA$ is a probability measure, we have
\begin{equation}
\label{eq:empirical-measure-sum-of-masses}
|\muA_{-, \Ccomp}| + |\muA_{0, \Ccomp}|  + |\muA_{+, \Ccomp}|  = 1,
\end{equation}
where we use the notation $|\mu| = \int_\R d\mu$ for measures $\mu$ on $\R$.

We first argue that it is enough to prove \eqref{eq:lower-bound-sum-mgf-strict-convexity} on the event
\begin{equation}
\label{eq:mu-A-bounded-first-moment}
\cD_{\BigC}= \Big\{  \int |x| d\muA(x) \leq \BigC \Big\} ,
\end{equation}
where $\BigC>0$ is sufficiently large.
Indeed, since $\int x d\muA = 0$, we have on $\cD_\BigC^c$
\begin{equation}
\int_0^\infty x d \muA(x) \geq \BigC/2,
\end{equation}
and thus, we have by the lower bound \eqref{eq:mgf-exp-lower-bound} and Jensen's inequality
\begin{equation}
\frac{1}{d^a} \int g_a d\muA
\geq \frac{1}{d^a} \int_0^\infty g_a(x) d \muA(x)
\geq \clb \int_0^\infty e^{\alpha \gamma x} d\muA(x)
\geq \clb q e^{\frac{1}{q} \alpha \gamma  \BigC/2} ,
\end{equation}
where we denoted
\begin{equation}
\muA( [0, \infty) ) = q.
\end{equation}
Note that if
\begin{equation}
\label{eq:condition-for-bound-increasing-in-positive-mass}
\frac{\alpha \gamma \BigC}{2} >1
\end{equation}
the map $(0,1) \to \R$, $q \mapsto \clb q e^{\frac{1}{q} \alpha \gamma  \BigC/2}$ is decreasing.
Thus, for any $\BigC>0$ which satisfies \eqref{eq:condition-for-bound-increasing-in-positive-mass}, we have
\begin{equation}
\int g_a d \muA \geq \clb e^{\alpha \gamma \BigC/2}.
\end{equation}
Choosing $\BigC$ large enough, we can make sure that
\begin{equation}
\clb e^{\alpha \gamma \BigC/2} > 2 f_\lambda(0) ,
\end{equation}
so that for any $\epsilon>0$, \eqref{eq:lower-bound-sum-mgf-strict-convexity} holds with any $\delta \in (0, f_\lambda(0))$.
We emphasise that $\BigC$ depends neither on $a\in \N$ nor on $\epsilon>0$, but only on $\clb$.

Next, we argue that it suffices to prove \eqref{eq:lower-bound-sum-mgf-strict-convexity} on the event
\begin{equation}
\label{eq:event-exponential-right-tail-av-small}
\cE_{\Ccomp} = \{ |\muA_{+, \Ccomp}| e^{ \alpha \gamma \frac{\mean_{+, \Ccomp}}{ |\muA_{+, \Ccomp}| } } \leq \frac{2 f_\lambda(0)}{\clb} \}
\end{equation}
for $\Ccomp$ large enough.
Indeed, by the lower bound \eqref{eq:mgf-exp-lower-bound} and Jensen's inequality we have on $\cE_{\Ccomp}^c$ for any $\Ccomp \geq \Clb$, 
\begin{equation}
\int g_a d \muA_{+, \Ccomp}
\geq d^a \clb \int e^{\alpha \gamma x} d \muA_{+, \Ccomp}(x) 
\geq d^a | \muA_{+, \Ccomp} | \clb e^{ \alpha \gamma \frac{\mean_{+, \Ccomp}}{ |\muA_{+, \Ccomp}| } } \geq d^a 2 f_\lambda(0) .
\end{equation}
Thus, on $\cE_{\Ccomp}^c$ for $\Ccomp\geq \Clb$ we have that for any given $\epsilon >0$, \eqref{eq:lower-bound-sum-mgf-strict-convexity} holds again with any $\delta \in (0,f_\lambda(0) )$.
In summary, we showed that, no matter the choice of $\epsilon>0$, for $\Ccomp \geq \Clb$ and $\BigC$ chosen according to \eqref{eq:condition-for-bound-increasing-in-positive-mass}
we have on $\cD_{\BigC}^c \cup \cE_{\Ccomp}^c$ for any $a\in \N$ and any $\delta \in (0,f_\lambda(0) )$
\begin{equation}
\frac{1}{d^a}\int g_a d \muA \geq f_\lambda(0) + \delta.
\end{equation}

It remains to prove that \eqref{eq:lower-bound-sum-mgf-strict-convexity} holds on $\cD_{\BigC}\cap \cE_{\Ccomp}$.
Note that we can still choose $K=K(\epsilon)$ and $a_0=a_0(\epsilon)$ depending on $\epsilon$.
We first argue that on $\cE_{\Ccomp}$ the contribution coming from $\muA_{+, \Ccomp}$ is negligible provided $\Ccomp$ is large enough.
To this end, we observe that, since $\mean_{+, \Ccomp} \geq |\muA_{+, \Ccomp}| \Ccomp$,
we have from the condition in \eqref{eq:event-exponential-right-tail-av-small}
\begin{equation}
| \muA_{+, \Ccomp} | e^{\gamma \alpha \Ccomp} \leq \frac{ 2 f_\lambda(0) }{\clb}.
\end{equation}
So, as $\Ccomp \to \infty$, we have that
\begin{equation}
\label{eq:empirical-measure-large-positive-mass-to-0}
| \muA_{+, \Ccomp}| \to 0
\end{equation}
uniformly in the values of the configuration $(A_v)_{v\in \Gen{k-a}}$ and all $a\in \N$.

Then we get from the condition in \eqref{eq:event-exponential-right-tail-av-small} with $\caux =\frac{2f_\lambda (0)}{\clb}$
\begin{equation}
\alpha \gamma \frac{\mean_{+, \Ccomp}}{|\muA_{+, \Ccomp}|} + \log |\muA_{+\Ccomp}| \leq \log \caux ,
\end{equation}
so that 
\begin{equation}
\alpha \gamma \mean_{+, \Ccomp} \leq | \muA_{+, \Ccomp} | \log \caux -  | \muA_{+, \Ccomp} |  \log | \muA_{+, \Ccomp} |. 
\end{equation}
Hence, we also have  
\begin{equation}
\label{eq:empirical-measure-positive-mass-to-0}
\mean_{+, \Ccomp} \to 0
\end{equation}
as $k\to \infty$ uniformly in the values of the configuration $(A_v)_{v\in \Gen{k-a}}$ and $a\geq 1$.

We also record that on $ \cD_{\BigC}$ we have
\begin{equation}
\label{eq:empirical-measure-bulk-to-1}
|\muA_{0,\Ccomp}| \to 1
\end{equation}
as $\Ccomp \to \infty$ uniformly in the values of the configuration $(A_v)_{v\in \Gen{k-a}}$.
Indeed, Markov's inequality implies
\begin{equation}
\muA \big(  (-\Ccomp, \Ccomp)^c\big) \leq \frac{\int |x| d\muA(x)}{\Ccomp}
\leq \frac{\BigC}{\Ccomp}.
\end{equation}
from which \eqref{eq:empirical-measure-bulk-to-1} readily follows.

Fix now $\epsilon>0$. With the flexibility to choose $K=K(\epsilon)$ and $a_0=a_0(K,\epsilon)$, we need to find $\delta>0$ such that \eqref{eq:lower-bound-sum-mgf-strict-convexity} holds.
Since $f_\lambda$ is convex, we have that for some subdifferential $\subdif \in \R$ of $f_\lambda$ at $0$,
\begin{equation}
\label{eq:subdifferential-lower-bound-general}
f_\lambda (x) \geq f_\lambda(0) + \subdif x.
\end{equation}
%where $\theta$ is a subdifferential of $f_\lambda$ at $0$.
Such a $\subdif$ exists by the convexity of $f_\lambda$, but it may not be unique. 

Since $f_\lambda$ is strictly convex at $0$, we have that for any $s>0$ there is $\Delta_s>0$ such that either
\begin{equation}
\label{eq:lower-bound-subdifferential-positive}
f_\lambda(x) \geq f_\lambda(0)  + \subdif x+ \Delta_s, \qquad x\geq s
\end{equation}
or
\begin{equation}
\label{eq:lower-bound-subdifferential-negative}
f_\lambda(x) \geq f_\lambda(0)  + \subdif x+ \Delta_s, \qquad x \leq - s.
\end{equation}
We take $s_\epsilon=\epsilon/8$ and choose $\delta>0$ such that
\begin{equation}
\label{eq:choice-delta}
\delta = \frac{\Delta_{s_\epsilon}}{4} .
\end{equation}
In addition, we choose $\deli = \deli(\epsilon) > 0$ such that
\begin{equation}
\subdif \deli \leq \delta \,\,\,(= \Delta_{s_\epsilon}/4) \qquad \text{and} \qquad \deli \leq \epsilon/8 .
\end{equation}
Using \eqref{eq:empirical-measure-positive-mass-to-0} we then choose $\Ccomp$ large enough, such that 
\begin{equation}
\label{eq:choice-of-compact}
\mean_{+, \Ccomp} \leq \deli \qquad \text{and} \qquad 
|\muA_{0,\Ccomp}| \geq 1- \frac{\delta}{f_\lambda(0)} .
\end{equation}
By \eqref{eq:empirical-measure-sum-of-means} we then have
\begin{equation}
\label{eq:empirical-measure-mean-without-large-positives-lower-bound}
\mean_{0,\Ccomp} + \mean_{-, \Ccomp} \geq -\deli ,
\end{equation}
which also implies
\begin{equation}
\label{eq:empirical-mean-compact-lower-bound}
\mean_{0,\Ccomp} \geq -\deli.
\end{equation}
Since $f_\lambda$ is increasing, we can take $\delii = \delii(\delta) > 0$ such that
\begin{equation}
\label{eq:f-tau-increasing-at-0}
f_\lambda(\delii) \geq  f_\lambda (0) + 3\delta.
\end{equation}
Now we split into the event
\begin{equation}
\cG_{\Ccomp}= \{ \mean_{-, \Ccomp} \leq - (\deli + \delta_2) \}
\end{equation}
and its complement. 
We first establish the desired bound on $\cG_{\Ccomp}^c$.
By \eqref{eq:empirical-measure-mean-without-large-positives-lower-bound} we have on this event
\begin{equation}
\label{eq:empirical-measure-mean-on-compact-positive}
\mean_{0, \Ccomp} \geq \delta_2 > 0.
\end{equation}
Since $f_\lambda$ is convex, we have by Jensen's inequality
\begin{align}
\label{eq:empirical-measure-convexity-f}
 \int f_\lambda d\muA_{0, \Ccomp}   
& \geq |\muA_{0, \Ccomp}| f_\lambda \big( \frac{\mean_{0, \Ccomp}}{|\muA_{0,\Ccomp}|} \big) .
\end{align}
Using the convexity of $f_\lambda$ again allows to estimate
\begin{equation}
|\muA_{0,\Ccomp}| f_\lambda \big( \frac{\mean_{0, \Ccomp}}{\muA_{0, \Ccomp}} \big) + f(0) (1 - |\muA_{0,\Ccomp}| )
\geq f_\lambda ( \mean_{0, \Ccomp} ),
\end{equation}
and thus, using \eqref{eq:empirical-measure-mean-on-compact-positive} and \eqref{eq:f-tau-increasing-at-0} and the right hand side of \eqref{eq:choice-of-compact}, we have
\begin{align}
\label{eq:lower-bound-mass-on-compact}
%\frac{1}{d^a} \int g_a d \muA_{0,\Ccomp}
%&\geq 
|\muA_{0,\Ccomp}| f_\lambda \big( \frac{\mean_{0, \Ccomp}}{\muA_{0, \Ccomp}} \big)
&\geq f_\lambda ( \mean_{0, \Ccomp} )
 - f_\lambda(0) (1 - | \muA_{0, \Ccomp} |)
 \geq f_\lambda(\delii) - f_\lambda(0) + | \muA_{0, \Ccomp} | f_\lambda (0) \nnb
 &\geq 3\delta  + f_\lambda(0) -\delta = f_\lambda(0) + 2 \delta.
\end{align}
Together with \eqref{eq:empirical-measure-convexity-f} this gives
on the event $\cG_{\Ccomp}^c$
\begin{equation}
\int f_\lambda d \muA_{0,\Ccomp} 
%\geq f_\lambda(\mean_{0,\Ccomp}) -f_\lambda(0) + f_\lambda(0)| \muA_{0,\Ccomp} | 
\geq f_\lambda(0) + 2\delta .
\end{equation}
Taking $a_0 = a_0(K,\delta)$ large enough, such that for $a \geq a_0$
\begin{equation}
\label{eq:approximation-ga-choice-a0}
\frac{1}{d^a} g_a \geq f - \delta
\end{equation}
uniformly on $(-\Ccomp, \Ccomp)$,
we then have
\begin{equation}
\frac{1}{d^a} \int g_a d \muA
\geq \frac{1}{d^a} \int g_a  d \muA_{0,\Ccomp} 
\geq
\int f_\lambda d\muA_{0,\Ccomp} -\delta \geq f_\lambda(0) + \delta.
\end{equation}
Thus, we showed that on the event $\cD_{\BigC}\cap \cE_{\Ccomp} \cap\cG_{\Ccomp}^c$ the inequality \eqref{eq:lower-bound-sum-mgf-strict-convexity} holds with $\delta$ as chosen in \eqref{eq:choice-delta}. 

It remains to establish \eqref{eq:lower-bound-sum-mgf-strict-convexity} on the event $ \cD_{\BigC}\cap \cE_{\Ccomp} \cap \cG_{\Ccomp}$, i.e.\ we can assume
\begin{equation}
\mean_{-, \Ccomp} \geq - (\deli + \delii).
\end{equation}
Then, since $\mean_{0, \Ccomp} + \mean_{-, \Ccomp} \leq 0$ by
\eqref{eq:empirical-measure-sum-of-means},
\begin{equation}
\mean_{0, \Ccomp} \leq \deli+ \delii.
\end{equation}
We now use the strict convexity of $f_\lambda$ at $0$.
Assume that \eqref{eq:lower-bound-subdifferential-positive} is the case.
The other case \eqref{eq:lower-bound-subdifferential-negative} can be treated similarly.
Let $s_\epsilon$ be as chosen above \eqref{eq:choice-delta}.
Then we have for $x\geq s_\epsilon$
\begin{equation}
f_\lambda(x) \frac{s_\epsilon}{x} + f_\lambda(0) (1-\frac{s_\epsilon}{x})
\geq f(s_\epsilon) \geq f_\lambda(0) + \subdif s_\epsilon +\Delta_{s_\epsilon} .
\end{equation}
Rearranging this yields
\begin{equation}
\label{eq:subdifferential-improved-lower-bound}
f_\lambda(x) \geq f_\lambda(0) + \subdif x + \Delta_{s_\epsilon} \frac{x}{s_\epsilon}.
\end{equation}
Thus, using \eqref{eq:subdifferential-lower-bound-general} and the improved lower bound \eqref{eq:subdifferential-improved-lower-bound}
\begin{align}
\int f_\lambda d\muA_{0,\Ccomp} &= \frac{1}{d^{k-a} } \sum_{v\colon -\Ccomp \leq A_v \leq \Ccomp} f_\lambda (A_v) 
=\frac{1}{d^{k-a} } \Big( \sum_{v\colon -\Ccomp \leq A_v \leq s_\epsilon} f_\lambda (A_v) 
+  \sum_{v\colon s_\epsilon \leq A_v \leq \Ccomp} f_\lambda (A_v)  \Big) \nnb
& \geq \frac{1}{d^{k-a}} \Big( \sum_{v\colon -\Ccomp \leq A_v \leq s_\epsilon} (f_\lambda (0) + \subdif A_v)
+ \sum_{v\colon s_\epsilon \leq A_v \leq \Ccomp}( f_\lambda(0) + \subdif A_v + \Delta_{s_\epsilon} \frac{A_v}{s_\epsilon} )
\Big) \nnb
& = | \muA_{0,\Ccomp} | f_\lambda (0) + \subdif \mean_{0,\Ccomp} + \frac{\Delta_{s_\epsilon}}{s_\epsilon} \frac{1}{d^{k-a} } \Big( \sum_{v\colon A_v \geq 0} A_v - \sum_{v\colon 0 < A_v \leq s_\epsilon} A_v - \sum_{v\colon A_v \geq \Ccomp} A_v \Big)
\end{align}
Using $\sum_{v\in \Gen{k-a}}A_v=0$ so that 
\begin{equation}
\frac{1}{d^{k-a}} \sum_{A_v \geq 0} A_v = \frac{1}{2} \lpnorm{A}{k-a}{1}
\end{equation}
and estimating the other two sums accordingly, we have 
\begin{align}
\int f_\lambda d\muA_{0,\Ccomp} &\geq | \muA_{0,\Ccomp} | f_\lambda (0) + \subdif \mean_{0,\Ccomp} 
+ \frac{\Delta_{s_\epsilon} }{s_\epsilon}
\Big(\frac{\lpnorm{A}{k-a}{1}}{2} - s_\epsilon - \mean_{+,\Ccomp} \Big). 
\end{align}
Then, using \eqref{eq:approximation-ga-choice-a0} and the  assumption \eqref{eq:l1-norm-larger-epsilon},
\begin{align}
\frac{1}{d^a} \int g_a d \muA
&\geq \frac{1}{d^a} \int g_a d \muA_{0, \Ccomp} \geq \int f_\lambda d \muA_{0,\Ccomp} - \delta \nnb
% = \frac{1}{d^{k-a}} \sum_{v \colon |A_v| \leq \Ccomp} f_\lambda(A_v) -  \epsilon \nnb
&\geq | \muA_{0,\Ccomp} | f_\lambda (0) + \subdif \mean_{0,\Ccomp} 
+ \frac{\Delta_{s_\epsilon}}{s_\epsilon} \big( \epsilon/2 - \epsilon/8 - \mean_{+,\Ccomp} \big) - \delta.
\end{align}

By the choices \eqref{eq:choice-of-compact}, \eqref{eq:choice-delta} and the lower bound \eqref{eq:empirical-mean-compact-lower-bound} we then have
\begin{align}
\frac{1}{d^a} \int g_a d \muA
& \geq f_\lambda (0) - \delta -  \delta + \frac{\Delta_{s_\epsilon}}{s_\epsilon} ( \epsilon/4 - \deli  )  - \delta \nnb
& \geq f_\lambda (0) + 4\frac{ \Delta_{s_\epsilon} }{\epsilon} (\epsilon/2 - \epsilon/8 -  \epsilon/8) - 3\delta \nnb
&= f_\lambda(0) + \Delta_{s_\epsilon} - 3 \delta
\geq f_\lambda(0) + 4 \delta - 3 \delta = f_\lambda(0)  + \delta
\end{align}
as needed.
\end{proof}

\subsection{Decay of the $L^1$ norm: proof of Theorem \ref{thm:pt-liouville-decay-l1} }

We collected all auxiliary results to give the proof of Theorem \ref{thm:pt-liouville-decay-l1}.
The key ingredient is Proposition \ref{prop:lower-bound-sum-mgf-strict-convexity},
which allows to bound the numerator of \eqref{eq:pt-liouville-definition} by a term which goes to $0$ faster than the denominator.

\begin{proof}[Proof of Theorem \ref{thm:pt-liouville-decay-l1}]
We show that for a given $\epsilon>0$, there exists $a>0$ large enough such
that as $k \to 0$
\begin{equation}
\label{eq:pt-probability-to-0}
\PLv{t_k} ( \lpnorm{A}{k-a}{1} > \epsilon ) \to 0.
\end{equation}
From the definition of $\PLv{t_k}$ in \eqref{eq:pt-liouville-definition} we see that we need an upper bound for
\begin{equation}
\label{eq:pt-l1-large-numerator}
\E [ \mathbf{1}_{ \{ \lpnorm{A}{k-a}{1} > \epsilon \}}  e^{-t_k \MA }] ,
\end{equation}
which, as $k\to \infty$, goes to $0$ faster than $\E[e^{-t_k \MA }]$.

To this end, we expand $\MA$ in \eqref{eq:pt-l1-large-numerator} at level $k-a$ according to \eqref{eq:balanced-brw-gmc-recursion}, which yields
\begin{equation}
\label{eq:decomp-for-decay-L1}
\MA \stackrel{d}{=} \frac{1}{d^{k-a}}  \sum_{v\in \Gen{k-a}} e^{\gamma A_v -\frac{\gamma^2}{2} (k -a)} \MA_v,
\end{equation}
where $(\MA_v )_{v\in \Gen{k -a}}$ is a collection of independent identically distributed random variables with $\MA_v \stackrel{d}{=} M^A$.
Then we have by the decomposition \eqref{eq:decomp-for-decay-L1},
the independence of $(A_v)_{v\in \Gen{k-a}}$ and $(\MA_v)_{v\in \Gen{k -a}}$, and the tower property of conditional expectation,
\begin{equation}
\E \big[ \mathbf{1}_{ \{ \lpnorm{A}{k-a}{1} > \epsilon \}}  e^{-t_k \MA } \mid (A_v)_{v\in \Gen{k-a}} \big]
 = \mathbf{1}_{ \{ \lpnorm{A}{k-a}{1} \geq \epsilon \} } e^{-\sum_{v\in \Gen{k-a} } \Lambda(\lambda p^a e^{\gamma A_v} ) } .
\end{equation}
By Proposition \ref{prop:lower-bound-sum-mgf-strict-convexity}, on the event
$\{ \lpnorm{A}{k-a}{1} > \epsilon \}$
there exist $\delta = \delta(\epsilon)>0$ and $a_0=a_0(\epsilon)$ such that
for $a\geq a_0$,
\begin{equation}
\label{eq:lower-with-gain-bound-sum-mgfs}
\frac{1}{d^{k-a}}\sum_{v\in \Gen{k-a} } \Lambda(\lambda p^a e^{\gamma A_v} ) \geq d^a (f_\lambda (0) + \delta).
\end{equation}
Using this we obtain that for $k$ large and $a>a_0$,
\begin{align}
\E \big[ \mathbf{1}_{ \{ \lpnorm{A}{k-a}{1} > \epsilon \}}  e^{-t_k \MA } \mid (A_v)_{v\in \Gen{k-a}} \big]
&\leq \mathbf{1}_{ \{ \lpnorm{A}{k-a}{1} \geq \epsilon \} } e^{- d^k (f_\lambda(0) + \delta) }
%&\leq e^{-\sum_{v\in \mathcal{G}} \Lambda(te^{\gamma A_v})} e^{t2^{\ell -a} \delta_a} \nnb
%& \leq e^{-\sum_{v\in \mathcal{G}} \Cstar t^\alpha e^{\gamma \alpha A_v }(1 - \epsilon)} e^{t2^{\ell-a} \delta_a} \nnb
%& = e^{-\sum_{v\in \Gen{\ell -a}} \Cstar t^\alpha e^{\gamma \alpha A_v }(1 - \epsilon)} e^{t2^{\ell-a} \delta_a} e^{\sum_{v\in \mathcal{G}^c} \Cstar t^\alpha e^{\gamma \alpha A_v }(1 - \epsilon) } ,
\end{align}
so that
\begin{equation}
\label{eq:pt-bound-numerator}
\E \big[ \mathbf{1}_{ \{ \lpnorm{A}{k-a}{1} > \epsilon \}}  e^{-t_k \MA } \big] \leq e^{-d^k (f_\lambda(0) + \delta )}  .
\end{equation}
Noting that $f_\lambda (0) = h(\lambda)$ we have by Theorem \ref{thm:liouville-mgf-asymptotics}  as $k\to \infty$
\begin{equation}
\E[e^{-t_k\MA}] = e^{-d^k (f_\lambda (0) + o(1) ) } ,
\end{equation}
where $o(1) \to 0$ as $k\to \infty$.
Thus, from \eqref{eq:pt-liouville-definition} and \eqref{eq:pt-bound-numerator} we obtain for $k$ large enough such that $|o(1)| \leq \delta/2$ and $a>a_0$,
\begin{equation}
\PLv{t_k}(\lpnorm{A}{k-a}{1} > \epsilon) \leq e^{-d^k(\delta/2) } ,
\end{equation}
which converges to $0$ as $k\to \infty$ as desired.
\end{proof}

\subsection{Decay of correlations: proof of Corollary \ref{cor:decay-correlation-tree}}
\label{subsec-dec}

\begin{proof}[Proof of Corollary \ref{cor:decay-correlation-tree}]
By the definition of $\embdPLv{t_k}$ and \eqref{eq:pt-liouville-definition} we need to show that for a suitable family of events $(\evnt)_{v\in \cL,\,a ,k\in \N }$ we have
\begin{equation}
\label{eq:liouville-correlation-eps-limit}
\lim_{a \to \infty }\limsup_{k\to \infty} \sup_{w\in \cL \colon d_\TT(v,w)=2(a+1)} 
\Big| \frac{\E[ \embdbalBRW_v \embdbalBRW_w \mathbf{1}_{\evnt} e^{-\lambda \base^k \MA}]}{\E[e^{-\lambda \base^k \MA} ]} \Big| =0.
\end{equation}

Fix now $w\in \cL$ with $d_\TT(v,w)=2(a+1)$, which means that the common ancestor of $v$ and $w$ is $a+1$ levels above $v$ and $w$.
Recall that we denote by $\anc{v}{a}$ and $\anc{w}{a}$ for $a\leq k$ the ancestors of $v$ and $w$ in $\rGen{a}{k}$,
where $\rGen{a}{k}$ denotes the set of vertices in $\cR_k \subseteq \TT$ at level $-a$ as defined below \eqref{eq:levels-of-infinite-tree}.
Note also that $\anc{w}{a}$ can take only one of $d-1$ values (given $v$), and that $d_\TT(\anc{v}{a},\anc{w}{a})=2$.
We therefore write
\begin{align}
\label{eq:decay-correlations-increment-x}
\embdbalBRW_{v} &= \embdbalBRW_{\anc{v}{a}} + \incr{v}{a}, \\
\label{eq:decay-correlations-increment-y}
\embdbalBRW_{w} & = \embdbalBRW_{\anc{w}{a}} + \incr{w}{a},
\end{align}
where $\incr{v}{a}$ and $\incr{w}{a}$ are the displacements along the paths $\anc{v}{a}\leftrightarrow v$ and $\anc{w}{a} \leftrightarrow w$.
Let now
\begin{equation}
\label{eq:correlation-good-event}
\evnt = \big\{ |\embdbalBRW_{ \anc{v}{a}}|\leq d\sqrt{\epsilon_a}, |\embdbalBRW_{z}| \leq d \sqrt{\epsilon_a }\; \mbox{\rm for all possible $z\in \rGen{a}{k}$ with $d_\TT(z,\anc{v}{a})=2$} \big\},
\end{equation}
where $\epsilon_a$ is as in Theorem \ref{thm:pt-liouville-decay-l1} and we note that there are precisely $d-1$ possible choices of $z$ in the right side of \eqref{eq:correlation-good-event}.
%By \eqref{eq:decay-of-correlation-choice-of-a}, $a$ and thus $\epsilon_a$ are determined by $v,w\in \cL$.

We first prove that the probabilities of these events converge to $1$ as $k\to \infty$ and then $a\to\infty$.
%$d_\cT(v,w) \to \infty$.
To this end, we note that by the exchangeability of $(\embdbalBRW_u)_{u\in \rGen{a}{}}$, we have
\begin{equation}
\embdELv{t_k} \big[ |\embdbalBRW_u|\mathbf{1}_{ \{\rlpnorm{\embdbalBRW}{a}{k}{1} \leq \epsilon_a\} } \big]
= \embdELv{t_k} \big[ |\embdbalBRW_{u'} |\mathbf{1}_{ \{\rlpnorm{\embdbalBRW}{a}{k}{1} \leq \epsilon_a \} } \big]
\end{equation}
for all $u, u' \in \rGen{a}{k}$, and thus
\begin{align}
\embdELv{t_k}[ |\embdbalBRW_u|\mathbf{1}_{ \{\rlpnorm{\embdbalBRW}{a}{k}{1} \leq \epsilon_a\} } ]
&= \frac{1}{d^{k-a}} \sum_{u\in \rGen{a}{k}} \embdELv{t_k}[ |\embdbalBRW_u|\mathbf{1}_{\{\rlpnorm{A}{a}{k}{1} \leq \epsilon_a\}  } ] \nnb
&= \embdELv{t_k}[ \rlpnorm{\embdbalBRW}{a}{k}{1} \mathbf{1}_{ \{\rlpnorm{\embdbalBRW}{a}{k}{1} \leq \epsilon_a\} } ] \leq \epsilon_a .
\end{align}
Then we have by Theorem \ref{thm:pt-liouville-decay-l1} and Markov's inequality 
\begin{align}
\label{eq:decay-correlations-event-complement-small}
\embdPLv{t_k}( \evnt^c )
&= \embdPLv{t_k}( \evnt^c ,\, \rlpnorm{\embdbalBRW}{a}{k}{1} \leq \epsilon_a ) + \embdPLv{t_k}( \evnt^c , \, \rlpnorm{\embdbalBRW}{a}{k}{1} > \epsilon_a )  \nnb
& \leq d \cdot \embdPLv{t_k} ( |\embdbalBRW_v| > d\sqrt{\epsilon_a}, \, \rlpnorm{\embdbalBRW}{a}{k}{1} \leq \epsilon_a  )
+ \embdPLv{t_k} ( \rlpnorm{\embdbalBRW}{a}{k}{1}> \epsilon_a  )  \nnb
&\leq d\cdot \frac{ \embdELv{t_k}[ |\embdbalBRW_{v}|\mathbf{1}_{ \{ \rlpnorm{\embdbalBRW}{a}{k}{1} \leq \epsilon_a \} } }{ d\sqrt{\epsilon_a} }+ o_k(1)  \leq \sqrt{\epsilon_a} + o_k(1).
\end{align}
It  follows that
\begin{equation}
\liminf_{a\to\infty} \liminf_{k\to \infty} \PLv{t_k}( \evnt) = 1
\end{equation}
as claimed.

We now turn to the proof of \eqref{eq:liouville-correlation-eps-limit}.
With the notation introduced in (\ref{eq:decay-correlations-increment-x}--\ref{eq:decay-correlations-increment-y}) we have 
%for the numerator in \eqref{eq:liouville-correlation-eps-limit} 
\begin{align}
\label{eq:correlation-expand-level-k-a}
\E[\embdbalBRW_{v} \embdbalBRW_{w} \mathbf{1}_{\evnt} e^{- \lambda \base^k \MA} ] 
&= \E[ ( \embdbalBRW_{\anc{v}{a}} + \incr{v}{a} ) ((\embdbalBRW_{\anc{w}{a}} + \incr{w}{a} )  \mathbf{1}_{\evnt} e^{-\lambda \base^k \MA } ] \nnb
&= \E[\embdbalBRW_{\anc{v}{a}} \embdbalBRW_{\anc{w}{a}} \mathbf{1}_{\evnt} e^{- \lambda \base^k \MA }] \nnb
&\qquad+ \E\big[\big(\embdbalBRW_{\anc{v}{a}} \incr{w}{a} + \embdbalBRW_{\anc{w}{a}} \incr{v}{a} \big) \mathbf{1}_{ \evnt} e^{-\lambda \base^k \MA } \big] \nnb
&\qquad+ \E[ \incr{v}{a} \incr{w}{a} \mathbf{1}_{\evnt}  e^{-\lambda \base^k \MA}].
\end{align}
We first argue that the expectation values in \eqref{eq:correlation-expand-level-k-a} containing $\incr{v}{a}$ and $\incr{w}{a}$ vanish.
To this end, we use \eqref{eq:balanced-brw-gmc-recursion} and decompose the multiplicative chaos $\MA$ according to
\begin{equation}
\MA \stackrel{d}{=} \frac{1}{d^{k -a}} \sum_{u \in\rGen{a}{k} } e^{\gamma \embdbalBRW_u - \frac{\gamma^2}{2} (k-a)}\MA_u .
\end{equation}
For the second term in \eqref{eq:correlation-expand-level-k-a}, we condition on $(\embdbalBRW_v)_{v\in \rGen{a}{k}}$ and obtain by the independence of $\incr{w}{a}$ and $(\MA_v)_{v\in \rGen{a}{k} \setminus \{ \anc{w}{a} \}}$
\begin{align}
\label{eq:liouville-correlation-one-increment}
&\E[\embdbalBRW_{\anc{v}{a}} \incr{w}{a} \mathbf{1}_{\evnt} e^{-\lambda \base^k \MA}] = 
\E \big[ \E[ \embdbalBRW_{\anc{v}{a}}  \incr{w}{a} \mathbf{1}_{ \evnt} e^{- \lambda \base^k \MA } \mid (\embdbalBRW_v)_{v\in \rGen{a}{k} } ] \big] \nnb
&=\E \Big [ \embdbalBRW_{\anc{v}{a}} \mathbf{1}_{ \evnt}  \E \big[ e^{-\lambda \base^k \frac{1}{d^{k-a}} \sum_{u \neq \anc{w}{a}} e^{\gamma \embdbalBRW_u - \frac{\gamma^2}{2} (k -a)} \MA_u} \bigm | (\embdbalBRW_u)_{u\in \rGen{a}{k}} \big] \nnb
&\qquad \times   
\E[\incr{w}{a}  e^{-\lambda \base^k \frac{1}{d^{k-a}} e^{\gamma z -\frac{\gamma^2}{2}(k-a)}\MA_{\anc{w}{a}} } ]_{z=\embdbalBRW_{\anc{w}{a}}} \Big ].
\end{align}
Let $\cR_{a}^{\anc{w}{a}}  \subseteq \cR_k$ be the subtree of depth $a$ rooted at $\anc{w}{a}$ and let $\rGen{a}{k}^{\anc{w}{a}}$ be the set of descendants of $\anc{w}{a}$ at level $0$. 
Since $(\embdbalBRW_u^{\anc{w}{a}})_{ u\in \cR_{a}^{\anc{w}{a}} }$ is a balanced Branching Random Walk of depth $a$ started from $\anc{w}{a}$, we have
\begin{equation}
\label{eq:sum-exchangeable-terms}
\sum_{ u\in \rGen{a}{k}^{\anc{w}{a}} } \E[\embdbalBRW_u^{\anc{w}{a}} e^{-\lambda \base^k \frac{1}{d^{k-a}} e^{\gamma z -\frac{\gamma^2}{2}(k-a)}\MA_{\anc{w}{a}} } ] = 0, \qquad z\in \R.
\end{equation}
By the exchangeability of $(\embdbalBRW_u^{\anc{w}{a}})_{ u\in \rGen{a}{k}^{\anc{w}{a}} }$, all terms in the sum in \eqref{eq:sum-exchangeable-terms} have the same value, i.e.\ for all $u,u' \in \rGen{a}{k}^{\anc{w}{a}}$, it holds
\begin{equation}
\E[\embdbalBRW_u^{\anc{w}{a}} e^{-\lambda \base^k \frac{1}{d^{k-a}} e^{\gamma z -\frac{\gamma^2}{2}(k-a)}\MA_{\anc{w}{a}} } ] 
=\E[\embdbalBRW_{u'}^{\anc{w}{a}} e^{-\lambda \base^k \frac{1}{d^{k-a}} e^{\gamma z -\frac{\gamma^2}{2}(k-a)}\MA_{\anc{w}{a}} } ] .
\end{equation}
Therefore, since $\incr{w}{a}= \embdbalBRW_{u_0}^{\anc{w}{a}}$ for some $u_0\in {\rGen{a}{k}^{\anc{w}{a}} }$, we have
\begin{equation}
\label{eq:expectation-increment-zero}
\E[\incr{w}{a}  e^{- \lambda \base^k \frac{1}{d^{k-a}} e^{\gamma z -\frac{\gamma^2}{2}(k-a)}\MA_{\anc{w}{a}} } ]=0,
\end{equation}
from which it follows that the display on the right hand side of \eqref{eq:liouville-correlation-one-increment} is $0$ as claimed.

For the last expectation value in \eqref{eq:correlation-expand-level-k-a} the same idea together with the independence of $\MA_{\anc{v}{a} }$ and $\MA_{\anc{w}{a}}$ leads to
\begin{align}
\label{eq:expectation-two-increments}
\E[ \incr{v}{a} \incr{w}{a} &\mathbf{1}_{\evnt}  e^{-\lambda \base^k \MA }]
\nnb
&=\E \Big[
\mathbf{1}_{ \evnt}  \E \big[ e^{-\lambda \base^k \frac{1}{d^{k-a}} \sum_{u \neq \anc{w}{a}, \anc{v}{a} } e^{\gamma \embdbalBRW_u - \frac{\gamma^2}{2} (k -a)} \MA_u} \bigm | (\embdbalBRW_u)_{u\in \rGen{a}{k}} \big] 
\nnb
 &\qquad \times\E[ \incr{v}{a}  e^{-\lambda \base^k \frac{1}{d^{k-a}} e^{\gamma z -\frac{\gamma^2}{2}(k-a)}\MA_{\anc{v}{a}} } ]_{ z= \embdbalBRW_{\anc{v}{a}}} \nnb
&\qquad \times \E[ \incr{w}{a}  e^{- \lambda \base^k \frac{1}{d^{k-a}} e^{\gamma w -\frac{\gamma^2}{2}(k-a)}\MA_{\anc{w}{a}} } ]_{w=\embdbalBRW_{\anc{w}{a}}} 
\Big].
\end{align}
Now the inner expectations in \eqref{eq:expectation-two-increments} vanish again by \eqref{eq:expectation-increment-zero}.
Dividing now \eqref{eq:correlation-expand-level-k-a} by $\E[e^{-\lambda \base^k\MA}]$ 
we thus have
\begin{align}
\label{eq:correlation-level-k-a}
\big| \embdELv{t_k}[ \embdbalBRW_{v} \embdbalBRW_{w} \mathbf{1}_{\evnt}  ] \big|
=\big| \embdELv{t_k}[ \embdbalBRW_{\anc{v}{a}} \embdbalBRW_{\anc{w}{a}} \mathbf{1}_{\evnt}  ] \big|
\leq \epsilon_a .
\end{align}
In particular, the right hand side is uniform in $k\in\N$ and hence,
\begin{equation}
\limsup_{k\to \infty}\big| \embdELv{t_k}[ \embdbalBRW_{v} \embdbalBRW_{w} \mathbf{1}_{\evnt}  ] \big| \leq \epsilon_a .
\end{equation}
Since $\epsilon_a \to 0$ as $a\to \infty$, the limit \eqref{eq:decay-correlation-tree} follows.
%from \eqref{eq:decay-of-correlation-choice-of-a}.
\end{proof}

\section*{Acknowledgements}
We thank Roland Bauerschmidt, R\'emi Rhodes, Antoine Tilloy and Vincent Vargas for useful discussions and motivation.
We thank Nikolay Barashkov and Joona Oikarinen for useful exchanges and for discussing with us their work \cite{Barashkov2024SmallDeviations},
and Oren Louidor for making \cite{Fels2024BRW-1-2} available to us.
We thank the referees for a careful and thorough reading of the first version of this article, and for their detailed remarks.
This work was supported by Israel Science Foundation grants number 421/20 and 615/24.

MH greatfully acknowledges the additional financial support by the German Research Foundation (DFG).

\bibliography{decay_ssbrw.bbl}
\end{document}